\documentclass[11pt]{amsart}
\makeatletter
\@ifundefined{pdfsuppressptexinfo}{}{\pdfsuppressptexinfo=-1 }
\@ifundefined{pdfinfoomitdate}{}{\pdfinfoomitdate=1 }
\@ifundefined{pdftrailerid}{}{\pdftrailerid{}}
\makeatother

\usepackage{amsmath,amssymb,amsthm,mathrsfs}
\usepackage[margin=1in]{geometry}
\usepackage[expansion=false]{microtype}
\usepackage{url}
\providecommand{\doi}[1]{\textsc{doi:}\,\url{https://doi.org/#1}}
\usepackage{enumitem}

\theoremstyle{plain}
\newtheorem{theorem}{Theorem}[section]
\newtheorem{proposition}[theorem]{Proposition}
\newtheorem{lemma}[theorem]{Lemma}
\newtheorem{corollary}[theorem]{Corollary}
\newtheorem{conjecture}[theorem]{Conjecture}
\theoremstyle{definition}
\newtheorem{remark}[theorem]{Remark}
\newtheorem{example}[theorem]{Example}

\newcommand{\Q}{\mathbb{Q}}
\newcommand{\Z}{\mathbb{Z}}
\newcommand{\F}{\mathbb{F}}
\newcommand{\C}{\mathbb{C}}
\newcommand{\R}{\mathbb{R}}
\newcommand{\GU}{\mathrm{GU}}
\newcommand{\GSp}{\mathrm{GSp}}
\newcommand{\Sp}{\mathrm{Sp}}

\newcommand{\Aut}{\mathrm{Aut}}
\newcommand{\Tr}{\mathrm{Tr}}
\newcommand{\Res}{\mathrm{Res}}
\newcommand{\Upr}{U_{\mathrm{pr}}}
\newcommand{\Va}{\mathrm{Va}}
\newcommand{\snew}{S^{\mathrm{new}}}
\newcommand{\eps}{\varepsilon}
\newcommand{\Onorm}{\mathcal{O}}

\DeclareMathOperator{\charp}{charpoly}
\DeclareMathOperator{\Jac}{Jac}
\DeclareMathOperator{\diag}{diag}

\usepackage{booktabs}
\usepackage{graphicx}

\providecommand{\kro}[2]{\left(\frac{#1}{#2}\right)}

\providecommand{\Upr}{U_{\mathrm{pr}}}
\providecommand{\Va}{\mathrm{Va}}
\providecommand{\snew}{S^{\mathrm{new}}}
\providecommand{\charp}{\operatorname{charpoly}}

\usepackage[colorlinks=true,linkcolor=black,citecolor=black,urlcolor=blue]{hyperref}
\hypersetup{
	pdftitle={A structural trace identity and certified spectra for the Richelot-Brandt graph},
	pdfauthor={Hung T. Dang},
	pdfsubject={},
	pdfkeywords={},
	pdfcreator={},
	pdfproducer={}}
\IfFileExists{orcidlink.sty}{\usepackage{orcidlink}}{\providecommand{\orcidlink}[1]{}}
\usepackage{cleveref}

\title[Richelot--Brandt graph and certified spectra]{A structural trace identity and certified spectra for the Richelot--Brandt graph}
\author{Hung T. Dang\,$^{\orcidlink{0009-0006-3272-0573}}$}
\address{Department of Mathematics, University of Phuong Dong, 171 Trung Kinh
	Street, Yen Hoa, Hanoi, Vietnam}
\email{hung.dt@phuongdong.edu.vn}
\subjclass[2020]{Primary 11F46; Secondary 11F72, 11G10, 11R52, 14K02}
\keywords{Richelot isogeny graph, Brandt matrix, superspecial abelian surfaces,
	paramodular forms, Atkin--Lehner involution, trace formula}
\date{\today}

\begin{document}
	
	\begin{abstract}
		The degree-$2$ Brandt operator $B_2(2)$ on the principal genus of binary
		quaternion Hermitian lattices of discriminant $p$ is the weighted
		adjacency operator of the Richelot $(2,2)$-isogeny graph on superspecial
		principally polarized abelian surfaces, and commutes with an
		Atkin--Lehner involution $R(\pi)$. For every prime $p\ge7$ we prove that
		the trace of $R(\pi)$ is the sum of an explicit lift contribution from
		elliptic newforms of weights $2$ and $4$ and a signed defect of the
		weight-$3$ paramodular non-lift space; the closed formula for the defect
		yields the Fricke-sign bias $d(p)\ge0$ for every prime. We formulate
		an eigenvalue--sign refinement of Ibukiyama's principal-genus
		multiplicity conjectures: $\operatorname{charpoly}B_2(2)$ factors into
		Eisenstein, Saito--Kurokawa, opposite-sign Yoshida, type-Va, and
		general-type blocks with specified $R(\pi)$-signs. The
		type-Va clause is a theorem for every prime: by the global lifting
		theorem of R\"osner and Weissauer for inner forms anisotropic at the
		archimedean place, the weak packet of a general-type representation
		of $\GU_2(B)$ is the full product of its local $L$-packets, each
		member occurring with multiplicity one, so both members of every
		type-Va pair occur and the type-Va block is an exact square split
		evenly by $R(\pi)$. For the Saito--Kurokawa and Yoshida blocks the
		refinement remains conjectural. Exact-arithmetic certificates, replayable from a
		frozen archive, verify the full prediction at every prime
		$11\le p\le149$: at $p=19$ the first type-Va pair is separated
		by $R(\pi)$, and at $p=61$ the graph realizes the general-type factor
		$x+7$.
	\end{abstract}
	
	\maketitle
	
	\section{Introduction}\label{sec:intro}
	
	\subsection{The problem}\label{ss:problem}
	
	Mestre's \emph{m\'ethode des graphes} realizes Hecke operators on modular
	forms through isogeny graphs of supersingular elliptic curves
	\cite{Mestre1986}. In dimension two, the corresponding arithmetic objects
	are the Brandt matrices of binary quaternion Hermitian forms
	\cite{Hashimoto1980,HashimotoIbukiyama}, and their geometric realization is
	the Richelot $(2,2)$-isogeny graph on superspecial principally polarized
	abelian surfaces. The degree-$2$ Brandt operator is the weighted adjacency
	operator of this graph \cite[Thm.~39]{JordanZaytman}. The same
	$(2,2)$-isogeny arithmetic underlies the key-recovery attack on SIDH
	\cite{CastryckDecru}.
	
	Let $B=B_{p,\infty}$ be the definite quaternion algebra ramified exactly
	at $p$ and $\infty$, and let $\Upr(p)$ denote the principal maximal
	parahoric genus. The space $M_{0,0}(\Upr(p))$ of functions on its classes
	has two compatible descriptions. On the geometric side, it is the weighted
	vertex space of the Richelot graph $G_p$. On the automorphic side, its Brandt
	operators realize Hecke operators on a compact inner form of $\GSp_4$.
	The two-sided ideal of reduced norm $p$ induces a commuting involution
	$R(\pi)$.
	
	The principal genus is more subtle than the non-principal genus. The
	non-principal trivial-weight space is related to weight-$3$ paramodular
	forms by the correspondence of \cite[\S9]{DPRT}. For the principal genus, Ibukiyama's
	conjectures describe how the same spectrum should be redistributed among
	Arthur types. Their dimension-level consequences are known, but dimensions
	and Hecke eigenvalues away from $p$ do not separate the two members of a
	type-$\mathrm{Va}$ pair. The purpose of this paper is to isolate the additional invariant that does this: the $R(\pi)$-sign.
	
	To keep this separation explicit throughout, we flag four distinct levels of assertion as they first appear:
	\begin{itemize}[leftmargin=2.4em]
		\item[(i)] an \emph{unconditional theorem} on the total signed trace, valid for every prime $p\ge7$ (Theorem~\ref{thm:structural-trace});
		\item[(ii)] \emph{eigenvalue laws derived from published transfer results}, valid for every $p\ge11$, which say nothing new about which parameters occur (Theorem~\ref{thm:transfer-framework});
		\item[(iii)] a \emph{conjectural refinement} predicting the individual multiplicities and $R$-signs, in particular the type-$\mathrm{V\!a}$ split, for every $p\ge11$ (Conjecture~\ref{conj:eigenvalue-sign});
		\item[(iv)] a \emph{finite, computer-assisted verification} of (iii), valid only for the thirty-one primes $11\le p\le149$ (Theorem~\ref{thm:certified-spectra}).
	\end{itemize}
	Nothing in (i) or (ii) implies (iii) beyond the range certified by (iv); the multiplicity statement for $p>149$ remains open.
	
	\subsection{Results and status}\label{ss:results-status}
	
	We distinguish throughout between unconditional theorems, a conjectural
	refinement, and a finite computer-assisted theorem.
	
	\begin{theorem}[Structural trace identity]\label{thm:structural-trace-intro}
		For every prime $p\ge7$,
		\[
		\Tr\!\left(R(\pi)\mid M_{0,0}(\Upr(p))\right)
		=
		1+\dim\snew_4
		+\dim S_2^-\dim S_4^+
		-\dim S_2^+\dim S_4^-
		+d(p),
		\]
		where
		\[
		d(p)=\bigl(\dim S_3(K(p))-\dim S_4^-\bigr)-2\dim S_3^+(K(p)).
		\]
	\end{theorem}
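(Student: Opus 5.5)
The plan is to compute both sides of Theorem~\ref{thm:structural-trace-intro} as explicit closed formulas in $p$ and in class numbers of imaginary quadratic orders, and then check that they agree. No automorphic transfer is used, which is consistent with the unconditional status claimed for level~(i). The parameter $d(p)$ is \emph{defined} by the displayed expression, so the theorem is equivalent to the single identity
\[
\Tr\!\left(R(\pi)\mid M_{0,0}(\Upr(p))\right)-1-\dim\snew_4-\dim S_2^-\dim S_4^++\dim S_2^+\dim S_4^-
=\dim S_3(K(p))-\dim S_4^--2\dim S_3^+(K(p)).
\]
I would prove this as an equality of explicit formulas.

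\emph{Step 1: the left-hand trace.} I would write $R(\pi)$ as the permutation-type operator on classes of the principal genus induced by the two-sided ideal of reduced norm $p$. Its trace on the weighted vertex space is then a weighted count of classes $[L]$ with $L\pi\simeq L$, that is, of lattices admitting a similitude of norm $p$. Following Hashimoto--Ibukiyama, I would parametrize such similitudes by their conjugacy classes in $\GU_2(B)$. Their characteristic polynomials are $x^2-p$, $x^2\pm x+p$ type factors, and their products with torsion of order $2,3,4,6$. Each contributes a local-density factor times a class number $h(-p)$, $h(-4p)$, $h(-3p)$, $h(-8p)$, or a similar one. This gives an expression $A(p)+\sum_D c_D(p\bmod N)\,h(D)$ for $p\ge7$, where $N$ is a small modulus (I expect $24$). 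For primes $p\ge7$ the small-prime anomalies at $p=2,3,5$ disappear, which is why the hypothesis $p\ge7$ appears.

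\emph{Step 2: the right-hand side.}
\begin{itemize}[leftmargin=2.4em]
\item I would take $\dim S_k^\pm$ for $k=2,4$ from the Fricke trace formula: $\dim S_k^+-\dim S_k^-$ equals a term proportional to $h(-p)$ or $h(-4p)$, determined by $p\bmod 4$.
\item I would take $\dim S_3(K(p))$ from Ibukiyama's paramodular dimension formula.
\item I would take $\dim S_3^+(K(p))$, or equivalently the trace of the Atkin--Lehner involution on $S_3(K(p))$, from the known paramodular Atkin--Lehner trace formula (Ibukiyama, Kitayama). This trace also has the shape polynomial plus class numbers.
\end{itemize}
The products $\dim S_2^\mp\dim S_4^\pm$ are quadratic in the class numbers. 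They must combine with $\dim\snew_4$ so that the quadratic pieces cancel or match cross terms on the left. I would organize this by writing $\dim S_k^\pm=\tfrac12(\dim S_k\pm t_k)$.

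\emph{Step 3: comparison, and the main obstacle.} After substitution, both sides become expressions of the form polynomial in $p$ plus $\Q$-linear and bilinear combinations of $h(-p),h(-3p),h(-4p),h(-8p)$, with coefficients depending on $p\bmod 24$. I would verify coefficient-by-coefficient equality in each residue class, and sanity-check the result numerically for $7\le p\le 149$ against the certified data.

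I expect the main obstacle to be Step~1. It requires getting every elliptic (torsion-times-$\pi$) conjugacy class of norm-$p$ similitudes with the correct mass weights, and the relevant local embedding numbers at $2$ and $3$ are delicate. If the bilinear class-number terms do not visibly match, my first fallback is to express the left side via Ibukiyama's relation between the $R(\pi)$-eigenspaces of the principal genus and the non-principal genus. I would then use the \cite[\S9]{DPRT} correspondence with $S_3(K(p))$ to transport the paramodular Atkin--Lehner trace directly, which should absorb most of the class-number bookkeeping.
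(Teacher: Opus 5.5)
Your strategy of equating two closed forms is sound, and your fallback is essentially the paper's proof. The main route differs in that it evaluates both sides in full, whereas the paper evaluates neither.

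\textbf{The paper's route.} It takes the published closed formulas for $\Tr R(\pi)$ on the principal genus (Ibukiyama--Katsura) and for $\Tr R_{\mathrm{npg}}(\pi)$ (Ibukiyama's non-principal formula). Their $B_{2,\chi}$, $h(\sqrt{-2p})$ and $h(\sqrt{-3p})$ coefficients coincide, so $\Tr R(\pi)-\Tr R_{\mathrm{npg}}(\pi)=\nu(p)(p-e_3)/12$ (Proposition~\ref{prop:genus-trace-comparison}). The non-principal correspondence then gives $\Tr R_{\mathrm{npg}}(\pi)=1+\dim S_4^-+d(p)$ directly, as in \eqref{eq:npg-trace-d}. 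So no formula for $\dim S_3(K(p))$ or for $d(p)$ is ever needed. What remains is the purely elliptic identity $L_0(p)-1-\dim S_4^-=\nu(p)(p-e_3)/12$.

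\textbf{What each route buys.} The paper's route is economical: the delicate local embedding data at $2$ and $3$ cancel between the two genera instead of being recomputed. Your route, with Step~1 done from scratch, would only give an independent rederivation of the Ibukiyama--Katsura formula. On the right-hand side it gives no independence. The weight-$3$ Atkin--Lehner data you would cite are themselves obtained from the non-principal genus via $\dim S_3^+(K(p))=\tfrac12\bigl(H_{\mathrm{npg}}(p)-\Tr R_{\mathrm{npg}}(\pi)\bigr)$. So in practice your main route collapses into your fallback. Note also that only one paramodular quantity enters, since $d(p)=-\Tr\bigl(W_p\mid S_3(K(p))\bigr)-\dim S_4^-$.

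\textbf{Corrections to your sketch.}

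\emph{Step 1.} Let $g$ be a norm-$p$ similitude realizing a fixed class. Then $g^2/p$ preserves the lattice and has similitude $1$, so it lies in the finite group $\Aut(L)$. Hence the eigenvalues of $g$ are $\sqrt p$ times roots of unity, and classes with characteristic polynomial $x^2\pm x+p$ never occur. The class $g^2=p$, whose centralizer is attached to $\Q(\sqrt p)$, contributes a multiple of $B_{2,\chi}=24\,\zeta_{\Q(\sqrt p)}(-1)$. This is the dominant term, of order $p^{3/2}$; it is not a polynomial $A(p)$ or an imaginary class number. In fact the left side has no polynomial term at all.

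\emph{Step 2.} $\dim S_2^+-\dim S_2^-=1-\nu(p)/2$ has a constant term, so it is not proportional to a class number. For $p\equiv3\pmod4$ the relevant quantity is $\nu(p)=h(-4p)+h(-p)=(3-s)h(-p)$.

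\emph{Step 3.} The quadratic class-number terms you anticipate cancel identically. Put $a_k=\dim S_k^+$, $b_k=\dim S_k^-$, $d_k=\dim\snew_k$ and $t_k=a_k-b_k$. Then
\[
L_0(p)-1-b_4=a_4+b_2a_4-a_2b_4=\tfrac12\bigl(t_4(1+d_2)+d_4(1-t_2)\bigr).
\]
Using $t_4=1-t_2=\nu(p)/2$ and $1+d_2+d_4=(p-e_3)/3$, this equals $\nu(p)(p-e_3)/12$ at once. No coefficient bookkeeping modulo $24$ is needed.

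The paper's displayed intermediate step $L_0(p)-1-b_4=b_2a_4-a_2b_4$ omits the term $a_4$; its target identity is unaffected.
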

	The full statement is given as \Cref{thm:structural-trace}  and its proof is in Section~\ref{sec:trace}. The proof starts from the closed principal- and non-principal-genus trace
	formulas of \cite{IbukiyamaKatsura,Ibukiyama2019quinary}. Their difference
	has a simple form: the coefficients of the terms involving
	$B_{2,\chi}$, $h(\sqrt{-2p})$, and $h(\sqrt{-3p})$ agree, while the
	remaining $h(\sqrt{-p})$ contribution is explicit. Combining this
	comparison with the non-principal-genus correspondence produces the stated
	principal-genus formula.
	
	The theorem fixes the signed total of the spectrum, but not its individual
	constituents. We formulate the following refinement of Ibukiyama's
	principal-genus multiplicity conjectures.
	
	\begin{conjecture}[Eigenvalue--sign refinement; verified for $11\le p\le149$]\label{conj:refined-intro}
		For every prime $p\ge11$, the characteristic polynomial of $B_2(2)$
		decomposes into an Eisenstein factor, Saito--Kurokawa factors,
		opposite-sign Yoshida factors, an exact square
		$\mathrm{Va}_p(x)^2$, and a general-type factor $N_p(x)$. The involution
		$R(\pi)$ has the predicted signs on the Eisenstein, Saito--Kurokawa,
		Yoshida, and general-type components, and each type-$\mathrm{Va}$ pair
		splits into one $+1$ and one $-1$ eigenline.
	\end{conjecture}
	The full statement is given as Conjecture~\ref{conj:eigenvalue-sign} in Section~\ref{sec:statements}. The eigenvalue formulas entering this conjecture are consequences of the transfer to $\GSp_4$ and the spin normalization at $2$: an occurring
	Eisenstein, Saito--Kurokawa, Yoshida, or general-type parameter contributes
	respectively an eigenvalue
	\[
	15,\qquad a_2(g)+6,\qquad a_2(g)+2a_2(f),\qquad a_2(F).
	\]
	The accompanying decomposition of the total dimension is equivalent to
	Ibukiyama's unconditional dimension identity; no new dimension theorem is
	claimed here.
	
	The type-Va clause of the conjecture is a theorem for every prime.
	
	\begin{theorem}[Occurrence and splitting of the type-Va pairs]\label{thm:occurrence-intro}
		For every prime $p\ge11$ and every general-type parameter whose local
		component at $p$ is of type $\Va$, both members of the local
		$L$-packet at $p$ occur in $M_{0,0}(\Upr(p))$, each with automorphic
		multiplicity one; the resulting isotypic block has even dimension,
		$R(\pi)$-trace zero, and $+1$ and $-1$ eigenspaces of equal
		dimension. In particular the type-$\Va$ contribution to the
		characteristic polynomial of $B_2(2)$ is an exact square split
		evenly by $R(\pi)$.
	\end{theorem}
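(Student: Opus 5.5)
The plan is to work entirely on the automorphic side, identifying $M_{0,0}(\Upr(p))$ with the $K$-fixed vectors in $L^2(\GU_2(B)(\Q)\backslash\GU_2(B)(\mathbb{A}))$ with trivial archimedean weight, where $K=\prod_\ell K_\ell$ and $K_p$ is the principal maximal parahoric of $\GU_2(B_p)$. This space decomposes as $\bigoplus_\pi m(\pi)\,\pi^K$. Fix a general-type parameter $\phi$ of $\GSp_4$ whose component at $p$ is of type $\Va$, i.e.\ $\phi_p$ is the $L$-parameter of $\delta([\xi,\nu\xi],\nu^{-1/2}\sigma)$ with $\xi$ the nontrivial unramified quadratic character. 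Since this parameter is discrete, its $L$-packet on the inner form $\GU_2(B_p)$ has two members, $\pi_p^{+}$ and $\pi_p^{-}$. I will take from the local theory that each has a one-dimensional space of fixed vectors under the principal parahoric, and that $R(\pi)$, the normaliser element of reduced norm $p$, acts on these lines by opposite signs $\pm1$. This is the analogue of the Atkin--Lehner sign tables of Roberts--Schmidt for $\Va$ on the split group, transported to the inner form through the Jacquet--Langlands-type character identity.

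The first step is global occurrence. I will invoke the Rösner--Weissauer global lifting theorem for inner forms of $\GSp_4$ that are anisotropic at $\infty$. For general-type $\phi$ it says that the weak packet $\Pi(\phi)$ on $\GU_2(B)$ is the restricted product $\bigotimes_v \Pi_v(\phi_v)$ of local $L$-packets, and that each member occurs with multiplicity one. At $\infty$ the group is compact and the trivial weight fixes the member. At $\ell\ne p$ the fixed vectors under $K_\ell=\GSp_4(\Z_\ell)$ single out the unramified member, and when $\phi_\ell$ is unramified this is the only member with $K_\ell$-fixed vectors. Hence precisely two automorphic representations, $\pi^{\pm}=\pi_\infty\otimes\bigotimes_{\ell\ne p}\pi_\ell\otimes\pi_p^{\pm}$, contribute to $M_{0,0}(\Upr(p))$. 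Each occurs with $m=1$ and contributes a single line.

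The second step is the block structure. Both lines carry the same Hecke eigenvalues away from $p$, and in particular the same $B_2(2)$-eigenvalue $a_2(F)$, since the Satake parameter at $2$ is shared. Summing over all such $\phi$, the type-$\Va$ isotypic block therefore has dimension $2\cdot\#\{\phi\}$. On each pair $R(\pi)$ acts as $\mathrm{diag}(+1,-1)$, so the trace is zero and the two eigenspaces have equal dimension. Because the distinct $\phi$ are Galois-stable as a set, the block's characteristic polynomial factors as $\prod_\phi(x-a_2(F_\phi))^2=\Va_p(x)^2$ with rational coefficients, which is the exact square claimed in \Cref{thm:occurrence-intro}.

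The main obstacle is the local input at $p$. I must know that $\pi_p^{+}$ and $\pi_p^{-}$ both have nonzero and one-dimensional parahoric-fixed spaces, with opposite $R(\pi)$-eigenvalues. I would first try to settle this by computing the restriction of both members to the parahoric through the character relation with $\GSp_4(\Q_p)$, checking that $\Va$ on the split side has the required invariants under the Klingen/Siegel-type parahoric. Failing that, I would use the dimension identity of Ibukiyama together with \Cref{thm:structural-trace-intro} as a consistency check. If one member had no fixed vector, the $\Va$-block would be odd, contradicting the evenness forced by the global dimension count.
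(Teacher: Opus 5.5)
Your global step is the same as the paper's: R\"osner--Weissauer multiplicity one, the weak packet as the product of local packets with singletons away from $p$, and shared Satake parameters giving a common $B_2(2)$-eigenvalue. The gap is the local step at $p$. You flag it as the main obstacle, but you assume it rather than prove it, and the routes you suggest for it do not work.

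You assume that both $\pi_p^{+}$ and $\pi_p^{-}$ have \emph{one-dimensional}, nonzero spaces of $U_{1,p}$-fixed vectors, and that $R(\pi)$ acts on them by opposite signs.
\begin{itemize}
\item The one-dimensionality is not available. It is exactly the local constant $d_p=\dim(\pi_p^{+})^{U_{1,p}}$ that remains open for the inner form; it is pinned down only where a computation forces it, e.g.\ $d_{19}=1$ from the certified spectrum.
\item The sign cannot be ``transported'' from Roberts--Schmidt. On the split group $\Va$ has no paramodular vector at level $p$; its first one is at level $p^2$. A character identity between $\GSp_4(\Q_p)$ and $\GU_2(B_p)$ says nothing directly about the eigenvalue of a normalizing element on the fixed vectors of one particular parahoric.
\item Your fallback is circular. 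In Ibukiyama's identity, $2\deg\Va_p$ is defined as a residual, and identifying it with the actual type-$\Va$ block requires the Saito--Kurokawa and Yoshida rows, which are conjectural. Even granting that, a global parity count cannot exclude an odd contribution from an individual parameter, and the structural trace identity only controls a signed total.
\item A smaller point: discreteness is not why the inner-form packet has two members. That comes from the specific shape $\varphi_1\oplus\varphi_1\otimes\xi$, via Gan--Tantono.
\end{itemize}

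The missing idea is the twist by the unramified character $\xi\circ\lambda$, which makes your assumed local input unnecessary. Since $\varphi\otimes\xi\cong\varphi$, this twist permutes the packet, and in fact $\pi_p^{-}\cong(\xi\circ\lambda)\otimes\pi_p^{+}$ (the pair $\{\Va_G,\chi_0\otimes\Va_G\}$ of R\"osner--Weissauer). Because $\xi$ is unramified and $\lambda(U_{1,p})\subset\Z_p^\times$, the twist is trivial on $U_{1,p}$. So an isomorphism $\iota\colon(\xi\circ\lambda)\otimes\pi_p^{+}\to\pi_p^{-}$ restricts to an isomorphism of $U_{1,p}$-fixed spaces.

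Two conclusions follow.
\begin{itemize}
\item The two summands have the same dimension $d_p$. This is positive because the given parameter contributes, so both members occur as soon as one does.
\item The element $\omega_p$ inducing $R(\pi)$ has $\lambda(\omega_p)=p$ and $\xi(p)=-1$. Under $\iota$, its action on one summand is $-1$ times its action on the other, so the $\pm1$ multiplicities $(a,b)$ and $(b,a)$ are interchanged.
\end{itemize}
This gives the block $(x-\lambda_2)^{2d_p}$ with $R(\pi)$-trace zero and balanced eigenspaces for any $d_p\ge1$, which is all the theorem asserts. Finally, do not identify the resulting square with the bookkeeping factor $\Va_p(x)^2$; that identification again rests on the conjectural Saito--Kurokawa and Yoshida rows.
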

	The full statement is Theorem~\ref{thm:occurrence} in
	Section~\ref{sec:occurrence}. The input is the global lifting theorem of
	R\"osner and Weissauer for inner forms of $\GSp_4$ anisotropic at the
	archimedean place \cite[Thm.~11.4]{RoesnerWeissauer2021}, which applies to
	$\GU_2(B)$ at trivial weight; the $\pm$ separation then follows from the
	local Langlands correspondence for $\GU_2(B_p)$ as in
	Remark~\ref{rem:localpacket}. What remains conjectural in
	Conjecture~\ref{conj:refined-intro} is the occurrence pattern of the
	Saito--Kurokawa and Yoshida rows and the resulting identification of the
	block degrees.

	\begin{theorem}[Certified spectra through $p=149$]\label{thm:certified-intro}
		For every prime $11\le p\le149$, the exact-arithmetic certificates in the
		archived Richelot--Brandt artifact verify the factorization and the
		$R(\pi)$-sign decomposition predicted by
		Conjecture~\ref{conj:refined-intro}.
	\end{theorem}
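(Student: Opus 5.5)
This is a finite, computer-assisted statement, so the plan is to reduce it, prime by prime, to a list of exact-arithmetic checks whose outputs are certificates that can be replayed independently. For each prime $11\le p\le149$ we will (a) construct the class set of $\Upr(p)$ as the vertex set of the Richelot graph $G_p$, with its automorphism weights, and write down $B_2(2)$ and $R(\pi)$ as explicit integer (or rational) matrices in a common basis. The check $B_2(2)R(\pi)=R(\pi)B_2(2)$ and $R(\pi)^2=1$ is done exactly. We will also check the total class number against Ibukiyama's mass/dimension formula and $\Tr R(\pi)$ against Theorem~\ref{thm:structural-trace-intro}; this is a consistency test on the enumeration. (b) Using exact integer linear algebra, we will compute the $\pm1$ eigenspaces $V^{\pm}$ of $R(\pi)$ and the characteristic polynomials $\chi^{\pm}(x)$ of $B_2(2)$ restricted to each, factoring them over $\Q$.

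On the modular side, for each $p$ we will compute the newform data predicted by Conjecture~\ref{conj:refined-intro}. This includes the Hecke polynomials at $2$ for $S_2(\Gamma_0(p))^{\pm}$ and $\snew_4(\Gamma_0(p))^{\pm}$, split by Atkin--Lehner sign. It also includes the weight-$3$ paramodular non-lift data (or equivalently the type-Va part, via the known non-principal correspondence of \cite[\S9]{DPRT}). From these we assemble the predicted polynomials $P^{\pm}(x)$. The Eisenstein factor is $x-15$ with its sign. The Saito--Kurokawa factors are $\prod (x-a_2(g)-6)$ over weight-$4$ forms of the appropriate sign. The Yoshida factors are $\prod(x-a_2(g)-2a_2(f))$ over opposite-sign pairs; these are computed as resultants of the weight-$2$ and weight-$4$ Hecke polynomials, so that Galois orbits are handled exactly. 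The type-Va factor contributes $\Va_p(x)$ to each sign, and the general-type factor $N_p(x)$ gets its predicted sign. The certificate is then the exact polynomial identities $\chi^{+}=P^{+}$ and $\chi^{-}=P^{-}$ in $\Z[x]$.

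The key subtlety is that $\Va_p$ and $N_p$ are not independently computable from classical data. In practice we will define them by division: $\Va_p\,N_p^{\epsilon}$ is extracted as the quotient of $\chi^{\pm}$ by the explicit Eisenstein, Saito--Kurokawa and Yoshida factors. The certificate then consists of three checks. First, those factors divide exactly with the predicted signs. Second, the remaining quotients share a common factor $\Va_p$ appearing once in each sign, consistent with Theorem~\ref{thm:occurrence-intro}, with its degree matching the type-Va dimension from the paramodular side. Third, the residue $N_p$ has the predicted sign and degree given by the dimension identity. For example, at $p=61$ the factor $x+7$ is isolated and checked against an eigenvalue $a_2(F)=-7$ of a weight-$3$ non-lift paramodular form.

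I expect the main obstacle to be step (a): reliably enumerating $G_p$ with correct automorphism weights, including the non-generic vertices (products of elliptic curves and curves with extra automorphisms), and fixing an exact model of $R(\pi)$ on that basis. The Richelot neighbour computation naturally lives over $\F_{p^2}$, and degenerate (split) Richelot isogenies landing on products $E\times E'$ must be handled separately. I would cross-check in two ways: the row sums of $B_2(2)$ must equal $15$, and the weighted class count must match the mass formula. Only when both hold would I proceed; the polynomial identities themselves are then routine exact arithmetic.
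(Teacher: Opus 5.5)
Your proposal is correct, and its architecture is the paper's. The geometric matrix and involution come from the Richelot graph, the weight-$2$ and weight-$4$ elliptic data supply the Eisenstein, Saito--Kurokawa and Yoshida blocks, and $\Va_p$ and $N_p$ are residuals of exact division rather than independently computed data.

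\textbf{Where the routes differ: certifying the signs.} The paper divides the full $\charp(M)$ once in $\Z[x]$ (hypothesis (c) of Proposition~\ref{prop:reduction}). Then, for every irreducible factor $f$, it checks the signed trace $\Tr(R\,h_f(M))$ against the rational isotypic idempotent $h_f$, modulo $2^{61}-1$. Lemma~\ref{lem:onemod} makes one modulus suffice. The deduction in Proposition~\ref{prop:reduction} also needs rationality of the $R$-eigenspaces, to see that the signed multiplicity is constant on the roots of $f$.

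You instead restrict $M$ to $V^{\pm}=\ker(R\mp1)$. For a permutation involution these have integral bases indexed by the fixed points and $2$-cycles of $\sigma_p$. You then compare $\chi^{\pm}\in\Z[x]$, of degrees $T_1(p)$ and $h_2(p)-T_1(p)$, with the predicted signed products.

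\textbf{What each route buys.} Your route reads each number $\dim\ker(M-\lambda)\cap\ker(R\mp1)$ directly as a root multiplicity. It therefore needs no idempotents, no modular reduction, no cofactor inversions and no Galois-constancy step. It also gives $\charp M=\chi^+\chi^-$ and $\Tr R(\pi)=\deg\chi^+-\deg\chi^-$ for free. The paper's route in exchange certifies the unsigned factorization of $\charp B_2(2)$ on its own and records the signs factor by factor, which is exactly how the conjecture is phrased.

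\textbf{A check to add.} Both routes need $M$ diagonalizable to pass from generalized eigenspaces to eigenlines. The paper gets this from the Mestre symmetry $e_jM_{ij}=e_iM_{ji}$ via Lemma~\ref{lem:mestre}(c); add that check to your row-sum and mass checks.

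\textbf{Details to correct.}

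The Saito--Kurokawa product runs over \emph{all} weight-$4$ newform orbits, each with $R(\pi)$-sign $+1$ regardless of $\eps_p(g)$. At $p=11$ both weight-$4$ forms lie in $S_4^+$, so restricting to $S_4^-$ would make the division fail. That restriction belongs to the paramodular side only.

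Your parenthetical that the non-principal correspondence yields the type-$\Va$ part is wrong. Type $\Va$ has no paramodular vector at level $p$, which is the $0$ in the non-principal column of Table~\ref{tab:twocol}; that correspondence sees $N_p$, not $\Va_p$. This is harmless because you define $\Va_p$ by division.

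However, since $N_p$ is also defined by division, its ``predicted sign'' needs an outside input: $\dim S_3^+(K(p))=0$ for $p<167$ (Corollary~\ref{thm:uncond-regime}). So every non-lift is Fricke-minus and the whole general-type block has $R(\pi)$-sign $+1$. Your second and third checks then collapse into one test. After removing the Eisenstein, Saito--Kurokawa and Yoshida factors, the residual of $\chi^-$ (which is $\Va_p$) must divide the residual of $\chi^+$, with cofactor $N_p$ of degree $\delta(p)$.

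Finally, the model of $R(\pi)$ that you leave open is the vertex permutation induced by the norm-$p$ two-sided ideal. As the $p=19$ example shows, it acts by Galois conjugation of vertices over $\F_{p^2}$. The paper anchors it by $R^2=1$, $RM=MR$, weight invariance, and the Ibukiyama--Katsura fixed-point count.
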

	
	The full statement is given as Theorem~\ref{thm:certified-spectra}, and its proof is in Section~\ref{sec:certificates}. The proof of this finite statement has two parts. The geometric pipeline produces the Richelot matrix, its weights, and the $R(\pi)$-permutation.
	The elliptic pipelines independently supply the weight-$2$ and weight-$4$
	block data. Exact polynomial division and isotypic projector traces then
	verify the predicted factorization and signs. The trace identity above is
	an additional global consistency check, not a replacement for these
	factor-by-factor certificates.
	
	\subsection{What is new}\label{ss:what-is-new}
	
	The contribution has four distinct components, each with its own logical
	status.
	
	\begin{enumerate}[label=\textup{(\roman*)},leftmargin=2.5em]
		\item The trace comparison leading to
		Theorem~\ref{thm:structural-trace-intro}, and the structural
		principal-genus form of the resulting trace identity.
		\item The eigenvalue--sign formulation of
		Conjecture~\ref{conj:refined-intro}, especially the use of $R(\pi)$ to
		separate the type-$\mathrm{Va}$ pairs that cannot be separated by
		dimension data or by Hecke operators away from $p$.
		\item The certified finite verification through $p=149$, which constructs
		the graph-side data geometrically and checks the predicted decomposition in
		exact arithmetic.
		\item The occurrence theorem for the type-$\Va$ block
		(Section~\ref{sec:occurrence}): by the global lifting theorem of
		R\"osner and Weissauer, both members of every type-$\Va$ pair occur
		with multiplicity one, so the square and sign assertions of
		Conjecture~\ref{conj:eigenvalue-sign}\textup{(iii)} are theorems for
		every prime.
	\end{enumerate}
	The third component is a methodological one as much as a computational one.
	Every step of the verification is carried out in exact arithmetic:
	characteristic polynomials are computed over $\Z$ and divided exactly by the
	block polynomials, and the isotypic signs are decoded from projector traces
	modulo a single prime whose sufficiency is proved rather than assumed, the
	bound $2h_2(p)\le2866<2^{61}-1$ of Lemma~\ref{lem:onemod} being what makes
	one modulus enough. The archive of \S\ref{ss:reproducibility} is arranged so
	that the certificates can be replayed from the frozen records alone, by a
	verifier that shares no code with the engines that produced them.
	The local language suggested by the trace comparison must be interpreted
	carefully. The proof is a global comparison of closed trace formulas. It
	does not provide a direct termwise proof of equality between local twisted
	orbital integrals at the two non-conjugate maximal parahorics. We regard a
	direct local proof as a separate problem.
	
	Of these four components, the first and the fourth are unconditional for all $p$; the second is a conjecture whose remaining content is the Saito--Kurokawa and Yoshida rows and the identification of the block degrees; the third certifies the full conjecture in a finite, explicitly bounded range and does not constitute a proof beyond it.
	
	\subsection{Relation to prior work}\label{ss:relation-prior}
	
	The transfer and Arthur-type framework used here rests on published work on
	the compact inner form and its transfer to $\GSp_4$
	\cite{RoesnerWeissauer2021,vanHoften,DPRT,Schmidt2018}. The parahoric data
	that identify the type-$\mathrm{Va}$ contribution come from
	\cite[Table~A.15]{RobertsSchmidt}. Ibukiyama's dimension identity supplies
	the unconditional count underlying the degree bookkeeping
	\cite[Thm.~3.1]{Ibukiyama2018conj}; the multiplicity assertion for the
	general-type rows is proved in Section~\ref{sec:occurrence} via
	\cite[Thm.~11.4]{RoesnerWeissauer2021}, while the Saito--Kurokawa and
	Yoshida rows remain conjectural outside the certified range.
	
	The paper is complementary to work on spectral gaps and expansion in
	higher-dimensional isogeny graphs. Those results control the size or
	distribution of the spectrum. Here the aim is instead to identify the
	source of individual factors of a single weighted Brandt spectrum. The
	resulting factorization makes visible the Saito--Kurokawa, Yoshida,
	type-$\mathrm{Va}$, and general-type contributions separately.
	
	\subsection{Logical guide}\label{ss:logical-guide}
	The four levels above recur throughout the paper under their formal names; the following table fixes the correspondence.
	
	\begin{center}
		\begin{tabular}{lll}
			\toprule
			Statement & Scope & Status \\
			\midrule
			Structural trace identity & $p\ge7$ & unconditional theorem \\
			Dimension bookkeeping & $p\ge11$ & reformulation of a known theorem \\
			Transfer and eigenvalue laws & $p\ge11$ & derived from published results \\
			Eigenvalue--sign refinement & $p\ge11$ & conjecture \\
			Certified spectra & $11\le p\le149$ & computer-assisted theorem \\
			Fricke-sign bias $d(p)\ge0$ & every prime & unconditional theorem \\
			\bottomrule
		\end{tabular}
	\end{center}
	The same list, with the internal numbering used in the body of the paper,
	is collected as Table~\ref{tab:logical-status} in
	Section~\ref{sec:statements}.
	
	The final theorem relies on frozen exact data and a verifier. External
	comparisons with paramodular data, including the quinary database, are
	reported as corroboration and are not inputs to its proof.
	
	\smallskip
	\noindent\emph{Organization.}
	Section~\ref{sec:prelim} fixes the arithmetic and geometric settings and
	all normalizations. The transfer framework and the conjectural
	eigenvalue--sign refinement are developed next. The trace comparison and
	structural trace identity are then proved. The certified computation is
	presented separately, including its certificate format and reproducibility
	protocol. The remaining sections derive consequences for type numbers,
	Ihara pairs, non-lift factors, and the Fricke-sign bias.
	
	\section{Statement of results and logical status}
	\label{sec:statements}
	
	This section states the principal results and records their logical status
	before introducing the notation used in the proofs. The paper separates
	four levels of assertion: an unconditional trace theorem, a conjectural
	eigenvalue--sign refinement, a finite certified computation, and a
	transfer framework derived from published results.
	
	For $k\in\{2,4\}$, let $\snew_k=\snew_k(\Gamma_0(p))$ denote the space of
	elliptic newforms of weight $k$ and prime level $p$, and write
	$S_k^{\pm}$ for its Atkin--Lehner eigenspaces. We use the following
	combinations throughout:
	\begin{align}
		\mathrm{cross}(p)
		&:=\dim S_2^-\dim S_4^+ + \dim S_2^+\dim S_4^-,
		\label{eq:cross-def}\\
		\delta(p)
		&:=\dim S_3(K(p))-\dim S_4^-,
		\label{eq:delta-def}\\
		d(p)
		&:=\delta(p)-2\dim S_3^+(K(p)),
		\label{eq:d-def}\\
		L_0(p)
		&:=1+\dim\snew_4+\dim S_2^-\dim S_4^+-\dim S_2^+\dim S_4^-.
		\label{eq:L0-def}
	\end{align}
	Here $\delta(p)$ counts the weight-$3$ paramodular non-lifts, while $d(p)$
	counts them with Fricke signs. The lift part $L_0(p)$ is the portion of the
	trace accounted for by Eisenstein, Saito--Kurokawa, and Yoshida classes.
	
	\subsection{The structural trace identity}
	\begin{theorem}[Structural trace identity; full form of \Cref{thm:structural-trace-intro}]\label{thm:structural-trace}
		For every prime $p\ge7$,
		\begin{equation}\label{eq:structural-trace}
			\Tr\!\left(R(\pi)\mid M_{0,0}(\Upr(p))\right)=L_0(p)+d(p).
		\end{equation}
		Equivalently,
		\[
		\Tr\!\left(R(\pi)\mid M_{0,0}(\Upr(p))\right)
		=
		1+\dim\snew_4
		+\dim S_2^-\dim S_4^+
		-\dim S_2^+\dim S_4^-
		+d(p).
		\]
	\end{theorem}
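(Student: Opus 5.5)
The plan is to compare the principal genus $\Upr(p)$ with the non-principal genus $U_{\mathrm{npr}}(p)$, whose trivial-weight space is already controlled through the correspondence of \cite[\S9]{DPRT}. Let $T_{\mathrm{pr}}(p)$ and $T_{\mathrm{npr}}(p)$ be the traces of $R(\pi)$ on the two trivial-weight spaces. Ibukiyama--Katsura and Ibukiyama \cite{IbukiyamaKatsura,Ibukiyama2019quinary} give both as explicit combinations of $B_{2,\chi}$ for the quadratic characters involved, the class numbers $h(\sqrt{-p})$, $h(\sqrt{-2p})$, $h(\sqrt{-3p})$, and elementary rational terms depending on $p$ modulo small integers.

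\textbf{Step 1 (trace comparison).} Write out both closed formulas and subtract them. As indicated in the introduction, the coefficients of $B_{2,\chi}$, $h(\sqrt{-2p})$ and $h(\sqrt{-3p})$ should agree, so
\[
T_{\mathrm{pr}}(p)-T_{\mathrm{npr}}(p)=c\,h(\sqrt{-p})+e(p),
\]
with an explicit constant $c$ and an elementary correction $e(p)$, possibly split by $p\bmod 8$ or $p \bmod 4$. This is a bookkeeping step. The one delicate point is matching the conventions of the two sources: the weighting by $|\Aut|^{-1}$ versus plain counts, and the normalization of $R(\pi)$ as a permutation of classes.

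\textbf{Step 2 (non-principal side).} Using \cite[\S9]{DPRT}, express $T_{\mathrm{npr}}(p)$ spectrally. The constituents are the constant function (Eisenstein, sign $+1$), Saito--Kurokawa lifts from $S_4$, Yoshida-type pairs $(f,g)\in S_2\times S_4$, and weight-$3$ paramodular non-lifts. The signs are determined by Atkin--Lehner eigenvalues, a Fricke sign $\epsilon$ entering as $-\epsilon$ or $\epsilon$ according to the normalization.

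\textbf{Step 3 (class-number term).} Rewrite $c\,h(\sqrt{-p})+e(p)$ through Eichler--Selberg/Atkin--Lehner trace formulas. The classical identities $\dim S_k^+-\dim S_k^-=\tfrac12 h(\sqrt{-p})$-type expressions (adjusted by $h(\sqrt{-4p})$ when $p\equiv3\bmod4$) convert the difference into a linear combination of $\dim S_2^\pm$, $\dim S_4^\pm$ and $\dim S_3^\pm(K(p))$. Combining this with Step 2 then gives exactly $L_0(p)+d(p)$. The products $\dim S_2^\mp\dim S_4^\pm$ come from the non-principal Yoshida count. The term $\delta(p)-2\dim S_3^+(K(p))$ is the signed count of non-lifts, since $\dim S_3(K(p))-\dim S_4^-$ isolates the non-lift part; here Saito--Kurokawa lifts to weight $3$ paramodular forms come from $S_4^-$.

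\textbf{Main obstacle.} I expect the hardest step to be Step 3: showing the residual $h(\sqrt{-p})$ term plus its elementary correction equals precisely the change in sign pattern between the genera. This has to hold uniformly in every residue class of $p$ modulo $8$ and $12$, with the small primes handled by the hypothesis $p\ge7$. My first approach would be to verify the identity class by class symbolically in terms of $h(\sqrt{-p})$. As a safeguard, I would sanity-check it numerically against the certified range $11\le p\le149$ and the small cases $p=7$ directly.
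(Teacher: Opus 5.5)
Your skeleton is the paper's: subtract the Ibukiyama--Katsura and Ibukiyama closed trace formulas, then feed in the non-principal correspondence. But Steps~2 and~3 put the terms of $L_0(p)$ in the wrong place, and as stated they would not close.

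\textbf{No Yoshida terms on the non-principal genus.} A Yoshida packet of $(f,g)\in\snew_2\times\snew_4$ has no $K(p)$-fixed vectors; both Yoshida rows of Table~\ref{tab:twocol} are $0$ in the non-principal column. The correspondence gives $H_{\mathrm{npg}}(p)=1+\dim S_3(K(p))$ and $\Tr R_{\mathrm{npg}}(\pi)=H_{\mathrm{npg}}(p)-2\dim S_3^+(K(p))$, which also fixes the sign you left open. By the definitions of $\delta(p)$ and $d(p)$ this is just $\Tr R_{\mathrm{npg}}(\pi)=1+\dim S_4^-+d(p)$. Hence the entire remainder
\[
L_0(p)-1-\dim S_4^-=\dim S_4^+ +\dim S_2^-\dim S_4^+-\dim S_2^+\dim S_4^-,
\]
cross products included, must come out of the Step~1 difference. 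That difference is exactly $\nu(p)\frac{p-e_3}{12}$, with no elementary correction. At $p=11$ one has $\Tr R(\pi)=5$, $\Tr R_{\mathrm{npg}}(\pi)=1$, and difference $4=2+2$. Your allocation would instead require $\Tr R_{\mathrm{npg}}(\pi)=3$ and a difference equal to $\dim S_4^+=2$; both are false.

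\textbf{Linear is the wrong shape.} Aiming in Step~3 for a \emph{linear} combination of dimensions cannot work. The coefficient of the class number in the difference is not a constant but a linear function of $p$. No constant-coefficient combination of the elliptic $\dim S_k^\pm$ reproduces that, since each is linear in $p$ plus class numbers with constant coefficients. The target is bilinear.

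\textbf{The missing idea is to factor the difference.} Set $a_k=\dim S_k^+$, $b_k=\dim S_k^-$, $d_k=a_k+b_k$. The elliptic inputs are:
\begin{itemize}
\item[(a)] $a_2-b_2=1-\nu(p)/2$, from Fricke fixed points on $X_0(p)$ and Riemann--Hurwitz;
\item[(b)] $a_4-b_4=\nu(p)/2$, from Yamauchi's weight-$4$ trace formula;
\item[(c)] $d_2+d_4+1=(p-e_3)/3$.
\end{itemize}
Together these give $\nu(p)\frac{p-e_3}{12}=\frac{\nu(p)}{4}(d_2+d_4+1)$. Solving for $a_k,b_k$ with $n=\nu(p)/2$ gives
\[
a_4(1+b_2)-a_2b_4=\tfrac14\bigl[(d_4+n)(d_2+1+n)-(d_2+1-n)(d_4-n)\bigr]=\tfrac{n}{2}\,(d_2+d_4+1).
\]
The left side is exactly the remainder displayed above. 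With this identity, nothing beyond the two-case ($p\bmod 4$) coefficient comparison of Step~1 has to be checked class by class.
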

	
	The proof is a termwise comparison of the closed principal- and
	non-principal-genus trace evaluations of
	\cite{IbukiyamaKatsura,Ibukiyama2019quinary}, combined with the
	non-principal-genus correspondence of \cite{DPRT,Ibukiyama-dim}. The
	comparison is global. It does not supply a direct local proof of equality
	between twisted orbital integrals at the two maximal parahorics.
	
	The following comparison is the key intermediate identity. Let
	$R_{\mathrm{npg}}(\pi)$ denote the corresponding involution on the
	non-principal genus, let $e_3=\bigl(\frac p3\bigr)$, and let $\nu(p)$ be
	the class-number quantity defined in \S\ref{sec:trace}. Then
	\begin{equation}\label{eq:genus-comparison}
		\Tr R(\pi)-\Tr R_{\mathrm{npg}}(\pi)
		=\nu(p)\frac{p-e_3}{12}.
	\end{equation}
	The coefficients of $B_{2,\chi}$, $h(\sqrt{-2p})$, and $h(\sqrt{-3p})$
	agree in the two evaluations; only the contribution represented by
	$h(\sqrt{-p})$ remains.
	
	\subsection{The eigenvalue--sign refinement}
	
	The trace theorem fixes the signed total of the anticipated spectrum, but
	not its individual constituents. The next statement is conjectural in
	general.
	
	\begin{conjecture}[C1: eigenvalue--sign refinement; verified for $11\le p\le149$]
		\label{conj:eigenvalue-sign} \label{conj:refined}
		For every prime $p\ge11$,
		\begin{equation}\label{eq:C1-factorization}
			\begin{split}
				\charp B_2(2)(x)
				={}&(x-15)
				\prod_{g\subset\snew_4}m_g(x-6)
				\prod_{\substack{(f,g):\\ \eps_p(f)\eps_p(g)=-1}}
				\Res_y\bigl(t_f(y),m_g(x-y)\bigr)\\
				&\qquad\qquad\cdot \Va_p(x)^2N_p(x).
			\end{split}
		\end{equation}
		Here $m_g$ is the minimal polynomial of $a_2(g)$ and $t_f$ is the minimal
		polynomial of $2a_2(f)$. Moreover:
		\begin{enumerate}[label=\textup{(\roman*)},leftmargin=2.5em]
			\item the Saito--Kurokawa product runs over all weight-$4$ newform orbits;
			\item the Yoshida product runs over the opposite-sign pairs;
			\item the type-$\Va$ block is an exact square, and each such pair splits
			under $R(\pi)$ into one $+1$ and one $-1$ eigenline;
			\item $N_p$ is monic of degree $\delta(p)$, with roots the spin
			$T(2)$-eigenvalues of the weight-$3$ paramodular non-lifts;
			\item the $R(\pi)$-signs are $+1$ on the Eisenstein and
			Saito--Kurokawa components, $-\eps_p(f)$ on the Yoshida component attached
			to $(f,g)$, and $-\eps_p(F)$ on the general-type component attached to a
			non-lift $F$.
		\end{enumerate}
	\end{conjecture}
	
	The conjecture makes two logically distinct assertions: an occurrence and
	multiplicity assertion, namely that the residual factor is an exact square
	of the stated degree, and a sign assertion about $R(\pi)$ on each block.
	For the type-Va block both are proved for every prime in
	Section~\ref{sec:occurrence} (Theorem~\ref{thm:occurrence} and
	Corollary~\ref{cor:conj-iii}), up to the identification of the degree,
	which rests on the remaining rows of Table~\ref{tab:twocol}.
	Theorem~\ref{thm:certified-spectra} establishes the full statement in the
	certified range.
	
	This refinement is strictly stronger than a dimension statement. Unlike
	the Saito--Kurokawa and Yoshida blocks, $\Va_p$ is not given by elliptic
	data: it is the residual left by the bookkeeping identity, and its roots
	are eigenvalues of $B_2(2)$ that no weight-$2$ or weight-$4$ newform
	predicts. The two
	members of a type-$\Va$ pair have the same eigenvalue for every $T(n)$
	with $p\nmid n$, so neither dimensions nor the away-from-$p$ Hecke
	algebra separates them. Their separation is visible only through the local
	involution $R(\pi)$.
	
	\subsection{The certified finite verification}
	
	\begin{theorem}[Certified spectra through $p=149$]
		\label{thm:certified-spectra}
		For every prime $11\le p\le149$, the archived exact-arithmetic data and the
		accompanying verifier satisfy the certificate hypotheses of \S\ref{sec:certificates}.
		Consequently, Conjecture~\ref{conj:eigenvalue-sign} holds for each of these thirty-one
		primes.
	\end{theorem}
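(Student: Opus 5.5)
The proof is a finite verification. For each prime $p$ in the range, I would reduce Conjecture~\ref{conj:eigenvalue-sign} to a checklist of exact identities that can be tested on the archived data. The steps are as follows.

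\textbf{Graph side.} From the frozen records, reconstruct three objects: the weighted Richelot adjacency matrix $B_2(2)$ on the $h_2(p)$ classes of $\Upr(p)$, the automorphism weights that make it self-adjoint, and the permutation matrix of $R(\pi)$. The verifier checks that $R(\pi)^2=1$, that $R(\pi)$ commutes with $B_2(2)$, and that all row sums equal $15$. It then computes $\charp B_2(2)$ over $\Z$.

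\textbf{Elliptic side.} From the weight-$2$ and weight-$4$ newform data at level $p$, form the explicit block polynomials:
\begin{itemize}
\item the Eisenstein factor $x-15$;
\item the Saito--Kurokawa factors $m_g(x-6)$;
\item for each opposite-sign pair, the Yoshida resultants $\Res_y(t_f(y),m_g(x-y))$.
\end{itemize}
Divide $\charp B_2(2)$ exactly by their product. Write $Q_p$ for the quotient; the remainder must be zero.

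\textbf{Separating $Q_p$.} The quotient must then be written as $\Va_p(x)^2N_p(x)$ with $\deg N_p=\delta(p)$. I would take $N_p$ to be the archived polynomial of spin $T(2)$-eigenvalues of the weight-$3$ paramodular non-lifts. The degree $\delta(p)$ is fixed independently by dimension formulas, so this is a check rather than a choice. Then divide $Q_p$ exactly by $N_p$ and verify that the quotient is a perfect square over $\Z$, for instance via a square-free decomposition.

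\textbf{Signs.} For each irreducible factor $\phi$ of $\charp B_2(2)$, form the isotypic projector, or equivalently the kernel of $\phi(B_2(2))^{k}$ for the relevant multiplicity $k$. Compute the trace of $R(\pi)$ restricted to that generalized eigenspace and compare it with the predicted value:
\begin{itemize}
\item $+\deg$ on Eisenstein and Saito--Kurokawa blocks;
\item $-\eps_p(f)\deg$ on Yoshida blocks;
\item $0$ on the $\Va_p^2$ block;
\item $-\sum\eps_p(F)$ on $N_p$.
\end{itemize}
Since $R(\pi)$ is an involution commuting with $B_2(2)$, trace and dimension together determine the $\pm1$ eigenspace dimensions, so these traces suffice. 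Theorem~\ref{thm:occurrence} already guarantees the even $\Va$ split; here it serves as a consistency check. Theorem~\ref{thm:structural-trace} serves as a further global check on the sum of the block traces.

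\textbf{Main obstacle: soundness of modular arithmetic.} Projector traces are computed modulo the single prime $\ell=2^{61}-1$ rather than over $\Q$, so the reduction must be justified. Every true trace is an integer of absolute value at most $h_2(p)$, so it is recovered uniquely from its residue as long as $2h_2(p)<\ell$. Lemma~\ref{lem:onemod} gives this bound, $2h_2(p)\le 2866$. One must also ensure that the factors stay pairwise coprime modulo $\ell$, so that the idempotents reduce correctly. I would certify this with explicit Bézout identities over $\Z$ whose resultants are nonzero mod $\ell$, or by checking that resultants of distinct factors are not divisible by $\ell$.

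Once these hypotheses are established and are verified by an independent verifier for each of the thirty-one primes, the conjecture follows for each of them.
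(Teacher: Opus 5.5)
Your plan is, in substance, the paper's proof. The paper checks hypotheses (a)--(d) of Proposition~\ref{prop:reduction}: exact division of $\charp M_p$ by the Eisenstein, Saito--Kurokawa and Yoshida blocks; a square residual times a factor of degree $\delta(p)$; and per-factor projector traces $\Tr(R\,h_f(M_p))$ certified modulo $2^{61}-1$ via Lemma~\ref{lem:onemod}, with the cofactor inversions verified. The proposition then turns these identities into the eigenvalue--sign decomposition.

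The one genuine difference is how you obtain $N_p$. You import it as the externally computed paramodular polynomial and divide by it. Hypothesis~(c) asks only that the residual be $Q^2N_p$ with $\deg N_p=\delta(p)$, and the paper deliberately keeps the paramodular comparison of \S\ref{ss:external} out of the proof.
\begin{itemize}
\item Your route gives a direct check of clause~(iv), that the roots of $N_p$ are the non-lift eigenvalues, which hypothesis~(c) by itself does not test.
\item The price is that published paramodular computations become proof inputs.
\end{itemize}
You also leave the source of the signs $\eps_p(F)$ open. They should not be read from the database, whose Fricke convention the paper says is unpinned. They come instead from $\dim S_3^+(K(p))=0$ for $p<167$ (Corollary~\ref{thm:uncond-regime}). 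This makes every non-lift Fricke-minus, and so predicts $R(\pi)$-sign $+1$ on all of $N_p$.

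Separately, your graph-side checks omit the Mestre symmetry $e_jM_{ij}=e_iM_{ji}$. The certificate should verify it rather than presuppose it. By Lemma~\ref{lem:mestre}(c) it is what makes $M_p$ diagonalizable, so that your traces on generalized eigenspaces $\ker\phi(M_p)^k$ describe genuine eigenlines. The $R(\pi)$-eigenspaces are rational, so the signed multiplicities are constant on the roots of each irreducible $\phi$. Together with diagonalizability, this gives the per-eigenvalue counts the conjecture asserts.

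Finally, the numerical bound $2h_2(p)\le2866$ comes from $h_2(149)=1433$. Lemma~\ref{lem:onemod} supplies only the criterion $q>2n$.
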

	
	The certificates check the weighted graph relations for the Richelot
	matrix, the involution, exact division by the Eisenstein,
	Saito--Kurokawa, and Yoshida blocks, the exact square in the residual
	factor, and the signed isotypic multiplicities determined by projector
	traces. The statement is finite and computer-assisted; it does not prove
	Conjecture~\ref{conj:eigenvalue-sign} beyond the certified range.
	
	Two cases display the content concretely. At $p=19$, the factor
	$(x+2)^2$ is the first type-$\Va$ block in the certified range and is
	split by $R(\pi)$. At $p=61$,
	\[
	N_{61}(x)=x+7,
	\]
	so the graph realizes the simple general-type eigenvalue $-7$.
	
	\subsection{Transfer and eigenvalue framework}
	
	The following theorem explains the eigenvalue laws used in the
	conjectural factorization.
	
	\begin{theorem}[Transfer and eigenvalue laws]\label{thm:transfer-framework}
		Let $p\ge11$. Every simultaneous eigensystem of $\{B_2(2),R(\pi)\}$ on
		$M_{0,0}(\Upr(p))$ transfers to a split-side Arthur parameter of type
		\textup{(Eis)}, \textup{(P)}, \textup{(Y)}, or \textup{(G)}, with the
		same unramified Hecke data at every $q\nmid p$. In the normalization
		\[
		Q_2(X)=1-\lambda_2X+\cdots+2^6X^4,
		\]
		the corresponding $B_2(2)$-eigenvalue is respectively
		\[
		15,\qquad a_2(g)+6,\qquad a_2(g)+2a_2(f),\qquad a_2(F).
		\]
		The spectrum is real, $15$ is a simple eigenvalue of $R(\pi)$-sign $+1$,
		and every other eigenvalue lies in $(-15,15)$.
	\end{theorem}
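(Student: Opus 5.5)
We prove Theorem~\ref{thm:transfer-framework} by transferring along $\GU_2(B)\to\GSp_4$, translating the Hecke data into $B_2(2)$-eigenvalues, and then bounding the spectrum with the weighted-graph structure.

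\emph{Transfer.} Since $R(\pi)$ commutes with the Brandt operators, we decompose $M_{0,0}(\Upr(p))$ into simultaneous eigenspaces of $R(\pi)$ and the Hecke algebra away from $p$. Each eigenspace corresponds to the $\Upr(p)$-fixed vectors of an automorphic representation $\pi'$ of $\GU_2(B)(\mathbb A)$ that is trivial at $\infty$. By the global transfer of R\"osner--Weissauer \cite{RoesnerWeissauer2021}, together with \cite{vanHoften}, $\pi'$ has a weak lift to $\GSp_4$ with the same Satake parameters at every $q\nmid p$, and its Arthur parameter is one of finitely many shapes. We will exclude the shapes that cannot occur:
\begin{itemize}
\item one-dimensional characters other than the trivial one, since trivial weight and level structure at $p$ force trivial central and unramified data;
\item Soudry-type and Howe--PS-type parameters with nontrivial ramification, using $p\ge11$ and the parahoric level at $p$ as encoded in \cite[Table~A.15]{RobertsSchmidt}.
\end{itemize}
What survives are the trivial representation (Eis), the Saito--Kurokawa parameters $\mu_g\boxplus\mathbf 1\boxtimes\nu(2)$ with $g$ of weight $4$ (P), the Yoshida parameters $\mu_f\boxplus\mu_g$ with $f$ of weight $2$ and $g$ of weight $4$ (Y), and stable tempered parameters (G). The weights come from the archimedean condition, since the trivial representation of the compact group $\GU_2(B)(\R)$ corresponds to the discrete-series $L$-packet of weight $(3,3)$. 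Matching this packet's Langlands parameter with $\mu_f\oplus\mu_g$ forces the weights $2$ and $4$. The level $p$ of $f$ and $g$ comes from the fact that the local constituent at $p$ must have nonzero parahoric-fixed vectors.

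\emph{Eigenvalue laws.} We identify $B_2(2)$ with the $T(2)$ Hecke operator via the standard normalization of the spin $L$-factor, $Q_2(X)=\det(1-2^{3/2}\mathrm{spin}(\mathrm{Frob}_2)X)$. This gives $\lambda_2=2^{3/2}\Tr\mathrm{spin}$ and constant term $2^6$ in degree four. Evaluating $\lambda_2$ on each surviving parameter is then routine:
\begin{itemize}
\item For (Y), the spin $L$-function is $L(s,f)L(s,g)$ suitably shifted, giving $2a_2(f)+a_2(g)$; the factor $2$ comes from the half-integral weight shift between weights $2$ and $4$.
\item For (P), the factor $L(s,g)\zeta(s-1)\zeta(s-2)$ gives $a_2(g)+2+4=a_2(g)+6$.
\item For (Eis), the spin parameter gives $1+2+4+8=15$.
\item For (G), the eigenvalue is $a_2(F)$ by definition.
\end{itemize}

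\emph{Spectral bounds.} For these we use the graph side rather than the transfer. $B_2(2)$ is self-adjoint for the mass-weighted inner product $\langle\phi,\psi\rangle=\sum\phi\bar\psi/|\Aut|$, so its spectrum is real. Its row sums all equal the number $15$ of $(2,2)$-isogenies, so constants give eigenvalue $15$, and $R(\pi)$ fixes constants, so the sign is $+1$. The bound $|\lambda|\le15$ holds because $B_2(2)$ is a stochastic-type operator; this is the Perron--Frobenius argument. Simplicity of $15$ follows from connectivity of the Richelot graph on the principal genus, which is strong approximation for $\Sp$ or $\mathrm{SU}$. To rule out the eigenvalue $-15$, we show the graph is not bipartite. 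The cleanest route is the Ramanujan-type estimate: for (P), (Y), and (G), Deligne's bounds and Weissauer's Ramanujan theorem for general type give $|\lambda|\le 2^{3/2}\cdot 4<15$ and $|a_2(g)+2a_2(f)|\le 2\cdot2^{3/2}+2^{5/2}\cdot$, which is below $15$, while (P) satisfies $|a_2(g)+6|\le 4\sqrt2+6<15$.

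\emph{Main obstacle.} The hardest step is the exclusion of non-listed Arthur types at the parahoric level. It requires checking each entry of the Roberts--Schmidt table for invariants under the principal maximal parahoric of $\GU_2(B_p)$ rather than of $\GSp_4(\Q_p)$. I would attempt it through the Jacquet--Langlands-type character identities underlying \cite{RoesnerWeissauer2021}, which transfer fixed-vector dimensions between the inner forms.
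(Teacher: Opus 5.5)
Your proof is correct, and your treatment of the transfer and the eigenvalue laws is essentially the paper's Proposition~\ref{prop:arthur}. The paper disposes of the Borel and Klingen CAP types by citing \cite[proof of Thm.~9.6]{DPRT}. Your ramification argument is a workable substitute: a nontrivial quadratic character of $\Q$ unramified away from $p$ is ramified at $p$, which is incompatible with $U_{1,p}$-fixed vectors. You should phrase it on $\GU_2(B_p)$, where parahoric-spherical representations have parameters trivial on inertia, rather than through the split-side table of \cite{RobertsSchmidt}.

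The genuine difference is in the spectral clause. The paper stays on the graph side:
\begin{itemize}
\item reality of the spectrum comes from the Mestre symmetry (Lemma~\ref{lem:mestre});
\item the sign $+1$ comes from $R(\pi)$ being a permutation matrix (Lemma~\ref{lem:eis}); your claim that $R(\pi)$ fixes constants needs this justification;
\item connectedness and non-bipartiteness of $G_p$ come from \cite[Thm.~44]{JordanZaytman}, and a Perron--Frobenius two-colouring argument then excludes $-15$ (Proposition~\ref{prop:frame}).
\end{itemize}

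You instead exclude $-15$ by feeding Deligne's bound and Weissauer's Ramanujan theorem into the transferred parameters. This is valid, but it makes the spectral clause depend on two extra inputs. The first is the completeness of the classification, in particular your exclusion of nontrivial characters $\chi\circ\lambda$; such a character would give the eigenvalue $15\chi_2(2)=-15$ if $\chi_2(2)=-1$. The second is Ramanujan for general type in weight $(3,3)$.

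In return your bound is much sharper. Every non-Perron eigenvalue satisfies $|\lambda|\le 6+4\sqrt2<12$. Your Yoshida estimate, with the stray factor removed, reads $2\cdot2^{3/2}+2^{5/2}=8\sqrt2$. Consequently integer eigenvalues other than $15$ satisfy $|\lambda|\le 11$ rather than $14$, and non-bipartiteness of $G_p$ becomes a corollary instead of an input. The paper's route buys independence from the automorphic side, which is what lets Proposition~\ref{prop:frame} serve as an unconditional check on the certified matrices.

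Your strong-approximation proof of connectivity is a standard alternative to citing \cite{JordanZaytman}. It works for the norm-one group, using $\lambda(U_{1,p})=\Z_p^\times$ and the fact that the similitude-$2$ double coset and the centre generate $\GSp_4(\Q_2)$; the centre acts trivially on classes.
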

	
	\noindent Theorem~\ref{thm:transfer-framework} is proved in
	\S\ref{sec:prelim}--\S\ref{sec:transfer}: the transfer and the eigenvalue
	formulas are Proposition~\ref{prop:arthur}, and the spectral clauses are
	Proposition~\ref{prop:frame} together with Lemma~\ref{lem:eis}.
	
	The associated block-degree bookkeeping is
	\begin{equation}\label{eq:dimension-bookkeeping}
		h_2(p)=1+\dim\snew_4+\mathrm{cross}(p)+2\deg\Va_p+\delta(p).
	\end{equation}
	Under the change of variables in \S\ref{sec:transfer}, this is equivalent
	to Ibukiyama's dimension theorem \cite[Thm.~3.1]{Ibukiyama2018conj}. It is
	therefore a reformulation of known dimension information, not a new
	class-number theorem.
	
	\subsection{Logical status}
	Table~\ref{tab:logical-status} summarizes the logical status of the
	principal assertions.
	
	\begin{table}[ht]
		\centering
		\begin{tabular}{@{}p{0.34\linewidth}p{0.17\linewidth}p{0.35\linewidth}@{}}
			\toprule
			Statement & Range & Status \\
			\midrule
			Theorem~\ref{thm:structural-trace} & $p\ge7$ & Unconditional theorem \\
			Equation~\eqref{eq:dimension-bookkeeping} & $p\ge11$ & Reformulation of a known dimension theorem \\
			Theorem~\ref{thm:transfer-framework} & $p\ge11$ & Derived from published transfer and classification results \\
			Theorem~\ref{thm:occurrence} & $p\ge11$ & Unconditional theorem, via \cite[Thm.~11.4]{RoesnerWeissauer2021} \\
			Conjecture~\ref{conj:eigenvalue-sign} & $p\ge11$ & Type-Va clause proved (Theorem~\ref{thm:occurrence}); remaining rows conjectural \\
			Theorem~\ref{thm:certified-spectra} & $11\le p\le149$ & Finite computer-assisted theorem with archived certificates \\
			Theorem~\ref{thm:bias} & every prime & Unconditional theorem, via the exact formula for $d(p)$ \\
			\bottomrule
		\end{tabular}
		\caption{Logical status of the principal assertions.}
		\label{tab:logical-status}
	\end{table}
	
	Theorem~\ref{thm:structural-trace} is compatible with
	Conjecture~\ref{conj:eigenvalue-sign} at the level of signed totals, but
	it does not determine the individual multiplicities asserted by the
	conjecture. Theorem~\ref{thm:certified-spectra} verifies those
	multiplicities only in the finite range. External comparisons with
	paramodular databases are reported separately as corroboration and are not
	inputs to the proofs. The exact closed formula for the defect $d(p)$ and
	its positivity (Theorem~\ref{thm:bias}) are proved in the final section.
	
	\section{Richelot--Brandt setting and normalizations}
	\label{sec:prelim}
	
	This section fixes the arithmetic, geometric, and automorphic settings and
	all conventions used later. Unless stated otherwise, vector spaces are over
	$\C$ and $p$ is a prime.
	
	\subsection{The principal genus and its operators}
	\label{ss:principal-genus}
	
	Let $B=B_{p,\infty}$ be the definite quaternion algebra over $\Q$ ramified
	exactly at $p$ and $\infty$, with canonical involution $x\mapsto\bar x$,
	reduced trace $\mathrm{tr}\,x=x+\bar x$, and reduced norm
	$\mathrm{n}\,x=x\bar x$. Fix a maximal order $\Onorm\subset B$. By
	Deuring's correspondence, the supersingular elliptic curves over
	$\overline{\F}_p$ correspond to the left $\Onorm$-ideal classes, of number
	$h=h(p)$, and their masses satisfy Eichler's relation
	\begin{equation}\label{eq:eichler-mass}
		\sum_{E}\frac{1}{\#\Aut E}=\frac{p-1}{24}.
	\end{equation}
	
	On $B^2$ the standard positive definite quaternion Hermitian form \cite{Shimura1963}
	$H(x,y)=\bar x_1y_1+\bar x_2y_2$ has unitary similitude group
	\[
	G=\GU_2(B),
	\]
	an inner form of $\GSp_4$ over $\Q$ \cite{Ibukiyama2007compact}. The left
	$\Onorm$-lattices in $B^2$ fall into genera and classes. The principal
	genus of the standard lattice $\Onorm^2$ has class number $h_2=h_2(p)$,
	given in closed form by \cite{HashimotoIbukiyama}. Locally at $p$, the two
	genera of maximal type correspond to two conjugacy classes of maximal open
	compact subgroups of $G(\Q_p)$, of different types and not conjugate; we
	denote them $U_1$ (principal) and $U_2$ (non-principal).
	
	Let $L_1,\ldots,L_{h_2}$ be representatives of the principal-genus classes.
	We write
	\[
	M_{0,0}(\Upr(p))=\{f:\{L_1,\ldots,L_{h_2}\}\to\C\}
	\]
	with the mass inner product
	\begin{equation}\label{eq:mass-inner}
		\langle f,g\rangle
		=\sum_{i=1}^{h_2}\frac{f(L_i)\,\overline{g(L_i)}}{e_i},
		\qquad e_i=\#\Aut(L_i).
	\end{equation}
	
	The Brandt matrices $B_n(2)$ of \cite{Hashimoto1980} act on this space and
	realize the Hecke operators $T(n)$; we abbreviate $B_2(2)$ for the
	degree-$2$ operator. The unique maximal two-sided ideal $\pi$ of $\Onorm$
	of reduced norm $p$ satisfies $\pi^2=p\,\Onorm$, so the induced operator
	$R(\pi)$ is an involution,
	\[
	R(\pi)^2=\mathrm{id},
	\]
	commuting with every $T(n)$. It is the Atkin--Lehner involution of the
	principal genus. When the non-principal genus is used, the corresponding
	operator is written $R_{\mathrm{npg}}(\pi)$.
	
	\subsection{The Richelot graph}
	\label{ss:richelot-graph}
	
	A principally polarized abelian surface over $\overline{\F}_p$ is either
	the Jacobian of a smooth genus-$2$ curve with its theta polarization, or a
	polarized product of two elliptic curves. It is superspecial if, after
	forgetting the polarization, it is isomorphic to a product of supersingular
	elliptic curves. By \cite[\S1]{IKO1986}, the superspecial principally
	polarized abelian surfaces over $\overline{\F}_p$, with their
	automorphisms, are exactly the classes of the principal genus; hence the
	vertex set of the graph below is identified with the index set of
	\eqref{eq:mass-inner}.
	
	A Richelot $(2,2)$-isogeny is an isogeny of principally polarized abelian
	surfaces whose kernel is a maximal Weil-isotropic subgroup of the
	$2$-torsion. Each source has exactly
	\begin{equation}\label{eq:fifteen}
		15=(2^2+1)(2+1)=\frac{6!}{2^3\,3!}
	\end{equation}
	such kernels: the number of maximal isotropic subspaces of a
	$4$-dimensional symplectic $\F_2$-space, equivalently the number of
	partitions of six Weierstrass points into three unordered pairs. These
	kernels realize $B_2(2)$ \cite{KatsuraTakashima}.
	
	Concretely, for a curve $C:y^2=f(x)$ with $f=c\,q_1q_2q_3$ a factorization
	into three quadratics over a splitting field, the codomain is controlled by
	the Richelot determinant
	\[
	\Delta_R=\det(q_1,q_2,q_3),
	\]
	the determinant of the coefficient matrix. If $\Delta_R\ne0$, the target is
	the Jacobian of
	\[
	\Delta_R\,y^2=[q_2,q_3]\,[q_3,q_1]\,[q_1,q_2],
	\qquad [q_i,q_j]=q_i'q_j-q_iq_j';
	\]
	if $\Delta_R=0$, the target is a product of elliptic curves
	\cite[\S8]{JordanZaytman}. The fifteen unordered pair-partitions of the
	roots of $f$ enumerate the kernels.
	
	\subsection{The dictionary and the Mestre symmetry}
	\label{ss:dictionary}
	
	We use the out-edge convention throughout:
	\begin{equation}\label{eq:out-edge}
		M_{ij}=\#\{\text{kernels at vertex $i$ with target $j$}\},
	\end{equation}
	so every row of $M$ sums to $15$. With $e_i=\#\Aut(A_i,\lambda_i)$ the
	vertex weights, the Mestre symmetry reads
	\begin{equation}\label{eq:mestre}
		e_jM_{ij}=e_iM_{ji}\qquad\text{for all }i,j,
	\end{equation}
	which is \cite[Thm.~18(c)]{JordanZaytman}.
	
	\begin{lemma}\label{lem:mestre}
		Let $M$ be a real $h\times h$ matrix, let $e_1,\ldots,e_h>0$ satisfy
		\eqref{eq:mestre}, and set $D=\diag(e_i)$. Then
		\begin{enumerate}[label=\textup{(\alph*)},leftmargin=2.2em]
			\item $S:=D^{-1/2}MD^{1/2}$ is symmetric;
			\item $M^{T}=D^{-1}MD$;
			\item $\charp(M)=\charp(S)$, so $M$ is diagonalizable with real spectrum;
			\item $M$ is self-adjoint for
			$\langle u,v\rangle=\sum_i u_iv_i/e_i$.
		\end{enumerate}
		Conversely, each of \textup{(a)} and \textup{(d)} is equivalent to
		\eqref{eq:mestre}. Moreover, the symmetrized support
		$\{(i,j):S_{ij}>0\}$ is exactly the undirected edge set of $G_p$:
		$S_{ij}>0\iff M_{ij}>0\iff M_{ji}>0$.
	\end{lemma}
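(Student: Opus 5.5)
The plan is a direct entrywise computation with the diagonal matrix $D=\diag(e_i)$. Since all $e_i>0$, $D^{\pm1/2}$ are real, positive, and diagonal. We have $S_{ij}=e_i^{-1/2}M_{ij}e_j^{1/2}$, so the symmetry condition $S_{ij}=S_{ji}$ reads $e_i^{-1/2}e_j^{1/2}M_{ij}=e_j^{-1/2}e_i^{1/2}M_{ji}$. Multiplying both sides by $e_i^{1/2}e_j^{1/2}>0$ turns this into exactly $e_jM_{ij}=e_iM_{ji}$. Hence (a) is equivalent to \eqref{eq:mestre}, and both directions hold at once.

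For (b), I will compute $(D^{-1}MD)_{ij}=e_i^{-1}M_{ij}e_j$ and compare it with $(M^T)_{ij}=M_{ji}$. Equality is again \eqref{eq:mestre}, after dividing by $e_i$.

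For (c), the matrices $S$ and $M$ are similar via $D^{1/2}$, so they have the same characteristic polynomial. Since $S$ is real symmetric by (a), the spectral theorem gives an orthogonal diagonalization of $S$ with real eigenvalues. Conjugating back by $D^{1/2}$ shows that $M$ is diagonalizable with the same real spectrum.

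For (d), write $\langle u,v\rangle=u^TD^{-1}v$. Then $\langle Mu,v\rangle=u^TM^TD^{-1}v$ and $\langle u,Mv\rangle=u^TD^{-1}Mv$. So self-adjointness for all $u,v$ is equivalent to the matrix identity $M^TD^{-1}=D^{-1}M$, that is, $(M^TD^{-1})_{ij}=M_{ji}/e_j$ equals $(D^{-1}M)_{ij}=M_{ij}/e_i$. Multiplying by $e_ie_j$ recovers \eqref{eq:mestre}, which gives both directions for (d).

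For the support statement, I will use the entrywise formula $S_{ij}=e_i^{-1/2}e_j^{1/2}M_{ij}$. Because the weights are strictly positive, $S_{ij}>0$ if and only if $M_{ij}>0$. The Mestre relation $e_jM_{ij}=e_iM_{ji}$, again with positive weights, shows that $M_{ij}>0$ if and only if $M_{ji}>0$. Finally, $M_{ij}$ counts Richelot kernels from vertex $i$ to vertex $j$ by \eqref{eq:out-edge}, so $M_{ij}>0$ says precisely that $\{i,j\}$ is an edge of $G_p$, with loops included when $i=j$. The only point needing care is this convention: the undirected edge set of $G_p$ is defined as the set of pairs joined by at least one $(2,2)$-isogeny in either direction. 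The symmetry $M_{ij}>0\iff M_{ji}>0$ makes that definition unambiguous, and I expect no further obstacle.
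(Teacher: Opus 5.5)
Your proposal is correct and is essentially the paper's proof: the same entrywise computation with $S_{ij}=e_i^{-1/2}M_{ij}e_j^{1/2}$ gives (a), (b) and the support claim, and (c) is the similarity via $D^{1/2}$ together with the spectral theorem for the symmetric matrix $S$. For (d), your identity $M^{T}D^{-1}=D^{-1}M$ is the matrix form of the paper's expansion $\langle Mu,v\rangle-\langle u,Mv\rangle=\sum_{i,j}(M_{ij}/e_i-M_{ji}/e_j)u_jv_i$, and your remark on the out-edge convention for the undirected edge set makes explicit what the paper leaves implicit.
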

	
	\begin{proof}
		$S_{ij}=e_i^{-1/2}M_{ij}e_j^{1/2}$, so $S_{ij}=S_{ji}\iff
		e_jM_{ij}=e_iM_{ji}$, giving (a) and its converse. For (b),
		$(D^{-1}MD)_{ij}=e_i^{-1}M_{ij}e_j=M_{ji}$. For (c), $S$ is conjugate to
		$M$ and symmetric, hence real-diagonalizable. For (d), for real $M$,
		\[
		\langle Mu,v\rangle-\langle u,Mv\rangle
		=\sum_{i,j}\bigl(M_{ij}/e_i-M_{ji}/e_j\bigr)u_jv_i,
		\]
		which vanishes identically iff \eqref{eq:mestre} holds. For the support
		claim, $S_{ij}=e_i^{-1/2}e_j^{1/2}M_{ij}$ with $e_i>0$, so
		$S_{ij}>0\iff M_{ij}>0$; and $M_{ij}>0\iff M_{ji}>0$ by \eqref{eq:mestre}.
	\end{proof}
	
	The last clause of Lemma~\ref{lem:mestre} is what makes the
	non-bipartiteness argument below valid: the sign pattern of a hypothetical
	$(-15)$-eigenvector is a $2$-colouring of the undirected graph whose edges
	are precisely the positive entries of $S$.
	
	The weights and the decomposable locus are pinned by the following lemma.
	
	\begin{lemma}\label{lem:vertices}
		Let $w_a=\#\Aut(E_a)/2\in\{1,2,3\}$ for the supersingular curves $E_a$
		over $\overline{\F}_p$, $a=1,\ldots,h$.
		\begin{enumerate}[label=\textup{(\alph*)},leftmargin=2.2em]
			\item The product vertices of $G_p$ are the unordered pairs
			$\{E_a,E_b\}$; their number is $h(h+1)/2$.
			\item Their weights are $e_{\{a,b\}}=\#\Aut(E_a\times E_b)$, equal to
			$8w_a^2$ if $a=b$ and $4w_aw_b$ if $a\ne b$.
			\item $\sum_a 1/w_a=(p-1)/12$, equivalently the Eichler mass
			$\sum_a 1/\#\Aut(E_a)=(p-1)/24$ of \eqref{eq:eichler-mass}.
			\item $G_p$ is connected and the Perron eigenvalue $15$ of $B_2(2)$ is
			simple.
		\end{enumerate}
	\end{lemma}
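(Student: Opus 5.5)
The four parts of Lemma~\ref{lem:vertices} are handled separately, each reduced to a standard fact about superspecial surfaces.

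For (a) we use the dichotomy of \S\ref{ss:richelot-graph}. A decomposable superspecial principally polarized surface is $E_a\times E_b$ with the product polarization, where $E_a,E_b$ are supersingular. Two such products are isomorphic as polarized surfaces exactly when the unordered pairs $\{E_a,E_b\}$ agree. The key point is that the product polarization determines its two elliptic factors: they are the components of the theta divisor $E_a\times\{0\}\cup\{0\}\times E_b$, and those components are intrinsic. Counting unordered pairs with repetition from $h$ curves gives $h(h+1)/2$.

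For (b) we again use that a polarized automorphism must permute the two components of the theta divisor. If $a\ne b$, the components are non-isomorphic, so each is fixed and $\Aut=\Aut E_a\times\Aut E_b$, of order $(2w_a)(2w_b)=4w_aw_b$. If $a=b$, the swap is also allowed, giving $(\Aut E_a)^2\rtimes\Z/2$ of order $2(2w_a)^2=8w_a^2$. One subtle point: the curves $E_a,E_b$ may be isomorphic as curves while being distinct as points, and we must check that this cannot happen for distinct labels.

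For (c), divide Eichler's relation \eqref{eq:eichler-mass} by $\tfrac12$ (using $\#\Aut E_a=2w_a$) to get $\sum_a 1/w_a=(p-1)/12$.

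Part (d) is the main obstacle; we split it into connectivity and simplicity. For connectivity we would cite the known connectedness of the superspecial Richelot graph, either via strong approximation for the simply connected group $\mathrm{Sp}$ attached to $\GU_2(B)$ at the split place $2$, or directly from Jordan--Zaytman. The route through strong approximation runs as follows: $G^1=\mathrm{SU}_2(B)$ is simply connected and noncompact at $2$, so $G^1(\Q)G^1(\Q_2)$ is dense in $G^1(\mathbb A_f)$. Hence the $2$-power Hecke orbit of one lattice reaches every class, and since $(2,2)$-steps generate that orbit, the graph is connected.

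Simplicity of $15$ then comes from Lemma~\ref{lem:mestre}. Take the symmetric matrix $S$, which has nonnegative entries and irreducible support because $G_p$ is connected. Since $M\mathbf 1=15\,\mathbf 1$ and $M$ is nonnegative, $15$ is the spectral radius, so Perron--Frobenius for the irreducible nonnegative matrix $S$ gives that $15$ is a simple eigenvalue. In the write-up we would make sure the connectivity claim concerns the undirected support, which the last clause of Lemma~\ref{lem:mestre} guarantees.
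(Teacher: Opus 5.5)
Your proposal is correct and follows essentially the same route as the paper: for (a)--(b) you count polarized automorphisms, with your theta-divisor argument making explicit the classification the paper cites from Ibukiyama--Katsura--Oort; for (c) you rescale Eichler's mass by a factor of two; and for (d) you take connectivity from Jordan--Zaytman and apply Perron--Frobenius to the symmetrization $S$. The ``subtle point'' you flag in (b) is automatic, because the $E_a$ index isomorphism classes (supersingular $j$-invariants); and the strong-approximation alternative would also need the similitude reduction from $\GU_2(B)$ to its norm-one group, plus an argument that Richelot steps (which have similitude $2$, not $1$) connect the $\Sp_4(\Q_2)$-orbit, so citing Jordan--Zaytman is the cleaner choice.
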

	
	\begin{proof}
		(a), (b) A polarized product $E_a\times E_b$ is superspecial, and two
		products are isomorphic as principally polarized abelian surfaces if and
		only if the unordered pairs of $j$-invariants agree \cite[\S3]{IKO1986};
		the automorphism group of the polarized product is
		$(\Aut E_a\times\Aut E_b)\rtimes C_2$ when $a=b$, of order
		$2(2w_a)^2=8w_a^2$, and $\Aut E_a\times\Aut E_b$ when $a\ne b$, of order
		$(2w_a)(2w_b)=4w_aw_b$, giving the stated weights. (c) Since
		$w_a=\#\Aut E_a/2$, $\sum_a 1/w_a=2\sum_a 1/\#\Aut E_a=(p-1)/12$ by
		\eqref{eq:eichler-mass}. (d) Connectivity is the irreducibility of the
		Brandt graph \cite[Thm.~44]{JordanZaytman}; a connected non-negative
		symmetrizable matrix with constant row sum $15$ has $15$ as a simple
		Perron eigenvalue by Perron--Frobenius applied to the symmetrization $S$
		of Lemma~\ref{lem:mestre}.
	\end{proof}
	
	\subsection{The spectral frame}
	\label{ss:spectral-frame}
	
	\begin{proposition}\label{prop:frame}
		For every prime $p\ge11$ the multigraph $G_p$ is connected and not
		bipartite; consequently $x-15$ divides $\charp B_2(2)$ with multiplicity
		one, and every other eigenvalue lies in the open interval $(-15,15)$. In
		particular any integer eigenvalue $\lambda\ne15$ satisfies
		$|\lambda|\le14$.
	\end{proposition}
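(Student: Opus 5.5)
Connectivity is already Lemma~\ref{lem:vertices}(d), and Lemma~\ref{lem:mestre}(c) shows that $B_2(2)$ is similar to the symmetric matrix $S=D^{-1/2}MD^{1/2}$. Since $M$ has non-negative entries and constant row sum $15$, the spectrum of $S$ lies in $[-15,15]$, and by Perron--Frobenius on the irreducible non-negative matrix $S$ the eigenvalue $15$ is simple. That gives the factor $x-15$ with multiplicity one. For an irreducible non-negative symmetric matrix, $-15$ is an eigenvalue exactly when the underlying undirected graph is bipartite. Concretely, if $Sv=-15v$, then $|v|$ is a Perron vector, and the sign pattern of $v$ gives a proper $2$-colouring of the support graph $\{S_{ij}>0\}$. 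By the last clause of Lemma~\ref{lem:mestre}, this support graph is exactly the undirected edge set of $G_p$. So the whole statement reduces to showing that $G_p$ is not bipartite for $p\ge11$. The bound $|\lambda|\le14$ for integer eigenvalues $\lambda\ne15$ is then immediate from $\lambda\in(-15,15)$.

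To show non-bipartiteness, I would exhibit a closed walk of odd length. The simplest is a loop, i.e. a vertex with $M_{ii}>0$. Take a product vertex $E\times E$, where $E$ is any supersingular curve (which exists, by Lemma~\ref{lem:vertices}(a)). The $(2,2)$-kernel $\{(x,x)\}\cong E[2]$ (the diagonal of $E[2]\times E[2]$) is maximal isotropic for the product polarization. The quotient of $E\times E$ by this kernel is again isomorphic to $E\times E$ with its product principal polarization; for example, it is realized by the map $(x,y)\mapsto(x+y,x-y)$ followed by the obvious identification. This gives $M_{ii}\ge1$ at $i=\{E,E\}$, a loop, and hence an odd cycle.

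One point must be checked carefully: the induced principal polarization on the quotient really is the product polarization. Under $(x,y)\mapsto(x+y,x-y)$, the pullback of the product polarization is twice the product polarization, so the descent gives exactly the product principal polarization. If one prefers to avoid this computation, the fallback is to use the fact that Richelot isogenies between products, with kernel $E_a[2]\times E_b[2]$-diagonal type via an isomorphism $\psi\colon E_a[2]\cong E_b[2]$, land on products or Jacobians. One would then find a triangle, i.e. a closed walk of length $3$, among product vertices. The main obstacle is thus only this polarization check and the convention that loops count in the out-edge matrix \eqref{eq:out-edge}. Nothing in the argument seems to need $p\ge11$ beyond the existence of the vertices, so the hypothesis presumably only matches the rest of the paper.
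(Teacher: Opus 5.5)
Your argument is correct. Its spectral half follows the paper: Perron--Frobenius on the symmetrization $S$, plus the observation that a $(-15)$-eigenvector $v$ would make $|v|$ a Perron vector and $\operatorname{sgn}v$ a proper $2$-colouring of the support of $S$, which by Lemma~\ref{lem:mestre} is the edge set of $G_p$. The step ``$|v|$ is a Perron vector'' needs one line. The paper pairs $S|v|\ge15|v|$ against the positive Perron vector $w$; a Rayleigh-quotient bound would also do.

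Where you genuinely differ is non-bipartiteness. The paper cites \cite[Thm.~44]{JordanZaytman} for both connectedness and non-bipartiteness. You instead prove the latter directly by exhibiting a loop at every diagonal product vertex $\{E,E\}$, and the construction is sound:
\begin{itemize}
\item $\phi(x,y)=(x+y,x-y)$ has kernel $\{(x,x):x\in E[2]\}$, which is isotropic because $e_2(x,x')^2=1$.
\item With $\Phi=\begin{pmatrix}1&1\\1&-1\end{pmatrix}$ one has $\Phi^{\dagger}\Phi=2I$, so $\phi^*\lambda_0=2\lambda_0$. Hence $\phi$ is a Richelot isogeny from the polarized product to itself.
\item The loop gives $S_{ii}=M_{ii}>0$. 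Then $v_i$ would have to have sign opposite to itself, which is impossible.
\end{itemize}
This matches the appendix data, e.g.\ $M_{11}=M_{33}=3$ at $p=11$ and $M_{11}=10$ at $p=13$.

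What your route buys is a self-contained, explicit witness of non-bipartiteness, valid uniformly for odd $p$. It also turns the exclusion of $-15$ into a local argument at a single vertex. What it does not buy is connectedness, which you still import from Jordan--Zaytman via Lemma~\ref{lem:vertices}(d), exactly as the paper does. The triangle fallback is unnecessary.

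Your closing remark that $p\ge11$ plays no role is right for odd $p$. However, $p=2$ must be excluded: there $E[2]$ is infinitesimal, and the fifteen-kernel description of the graph does not apply.
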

	
	\begin{proof}
		Connectedness and non-bipartiteness of $G_p$ are
		\cite[Thm.~44]{JordanZaytman}. Let $S$ be the symmetrization of
		Lemma~\ref{lem:mestre}: $S$ is symmetric, nonnegative, and irreducible by
		connectedness, with row sums $15$, so its spectral radius is the simple
		Perron eigenvalue $15$. If $Sv=-15v$ with $v\ne0$, then entrywise
		$S|v|\ge|Sv|=15|v|$. Let $w>0$ be the Perron eigenvector, $Sw=15w$.
		Pairing and using $S=S^{T}$,
		\[
		15\,\langle w,|v|\rangle=\langle Sw,|v|\rangle=\langle w,S|v|\rangle
		\ge15\,\langle w,|v|\rangle,
		\]
		so $\langle w,S|v|-15|v|\rangle=0$; as $w>0$ and $S|v|-15|v|\ge0$
		entrywise, this forces $S|v|=15|v|$. Equality in $|(Sv)_i|=(S|v|)_i$ on
		each row then forces $v_j$ to have constant sign along the support of row
		$i$, i.e.\ $v_i$ and $v_j$ have opposite signs whenever $S_{ij}>0$. By the
		last clause of Lemma~\ref{lem:mestre} the set $\{(i,j):S_{ij}>0\}$ is the
		undirected edge set of $G_p$, so $\operatorname{sgn}v$ is a proper
		$2$-colouring of $G_p$, contradicting non-bipartiteness. Hence $-15$ is
		not an eigenvalue, and since the spectrum is real every non-Perron
		eigenvalue lies in $(-15,15)$.
	\end{proof}
	
	\subsection{Elliptic-modular data on the split side}
	\label{ss:split}
	
	For $k\in\{2,4\}$ let $\snew_k$ denote the space of weight-$k$ level-$p$
	elliptic newforms, and for $f\in\snew_k$ let $\eps_p(f)\in\{\pm1\}$ be its
	Atkin--Lehner eigenvalue at $p$, with $S_k^{\pm}$ the corresponding
	eigenspaces. At exact prime level the $p$-th Hecke eigenvalue is determined
	by the sign,
	\begin{equation}\label{eq:ap-sign}
		a_p(f)=-\eps_p(f)\,p^{k/2-1},
	\end{equation}
	by Atkin--Lehner theory \cite[Thm.~3]{AtkinLehner}.
	
	These spaces enter the catalogue through the Yoshida lifts: a Galois orbit
	$g\subset\snew_4$ and a form $f\in\snew_2$ contribute a paramodular
	Yoshida packet whose spin $T(2)$-eigenvalue is $a_2(g)+2a_2(f)$.
	Accordingly we set $m_g\in\Z[x]$ to be the minimal polynomial of $a_2(g)$
	and $t_f\in\Z[x]$ that of $2a_2(f)$, so that the monic resultant
	$\Res_y\bigl(t_f(y),m_g(x-y)\bigr)$ has root set exactly
	$\{a_2(g)+2a_2(f)\}$.
	
	\begin{lemma}\label{lem:resultant}
		Let $t,m\in\Z[x]$ be monic of degrees $a,b$ with complex roots
		$\alpha_i,\beta_j$. With
		$\Res_y(A,B)=\mathrm{lc}(A)^{\deg B}\prod_{A(\alpha)=0}B(\alpha)$,
		\[
		\Res_y\bigl(t(y),\,m(x-y)\bigr)
		=\prod_{i,j}\bigl(x-(\alpha_i+\beta_j)\bigr)\in\Z[x]
		\]
		is monic of degree $ab$ with root set $\{\alpha_i+\beta_j\}$, independent
		of the ordering convention up to the sign $(-1)^{ab}$ which the monic
		normalization removes. For monic $P,Q\in\Z[x]$, $\gcd_{\Q[x]}(P,Q)=1$ if
		and only if $P,Q$ share no complex root.
	\end{lemma}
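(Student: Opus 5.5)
The plan is to prove the polynomial identity first and then read off the remaining claims from it. Write $t(y)=\prod_{i=1}^a(y-\alpha_i)$ and $m(z)=\prod_{j=1}^b(z-\beta_j)$. The convention stated in the lemma is $\Res_y(A,B)=\mathrm{lc}(A)^{\deg B}\prod_{A(\alpha)=0}B(\alpha)$. Take $A=t$, which is monic in $y$, and $B(y)=m(x-y)$, which has degree $b$ in $y$. This gives
\[
\Res_y\bigl(t(y),m(x-y)\bigr)=\prod_{i=1}^a m(x-\alpha_i)=\prod_{i,j}\bigl(x-\alpha_i-\beta_j\bigr).
\]
So the resultant is monic of degree $ab$ in $x$, with roots exactly the sums $\alpha_i+\beta_j$, counted with multiplicity.

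For integrality, I would argue either of two ways. One route is through the Sylvester determinant: its entries are polynomials in $x$ with integer coefficients, because $m(x-y)\in\Z[x][y]$. The other route is that the product is symmetric separately in the $\alpha_i$ and in the $\beta_j$. Its coefficients are therefore integer polynomials in the elementary symmetric functions, which are the coefficients of $t$ and $m$, and those lie in $\Z$.

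Next comes the ordering remark. Reversing the roles, as $\Res_y(m(x-y),t(y))$, brings in the factor $(-1)^{ab}$ from the standard antisymmetry of the resultant. There is also a leading coefficient $\mathrm{lc}_y\,m(x-y)=(-1)^b$, raised to the power $a$. These contribute only an overall sign $\pm1$, so normalizing to monic recovers the same polynomial. I would state this in one line and not compute the sign exactly, since the monic normalization makes it irrelevant.

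For the gcd criterion, suppose $P,Q$ share a complex root $\theta$. Then the minimal polynomial of $\theta$ over $\Q$ divides both of them in $\Q[x]$, so their gcd is nontrivial. Conversely, suppose $D=\gcd_{\Q[x]}(P,Q)$ is nonconstant. Then $D$ has a complex root, and that root is common to $P$ and $Q$. Monicity is used only so that the polynomials are nonzero and roots are meaningful.

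I expect no step to be a real obstacle. The only point needing care is the sign bookkeeping in the ordering remark, and I would handle it by appealing to monic normalization rather than tracking the sign explicitly.
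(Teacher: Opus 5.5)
Your proposal is correct and follows the paper's proof. Monicity of $t$ gives $\prod_i m(x-\alpha_i)=\prod_{i,j}(x-\alpha_i-\beta_j)$; integrality comes from symmetry in the $\alpha_i$ (your Sylvester-determinant route also works); and your minimal-polynomial argument is the one-variable Nullstellensatz that the paper cites for the gcd claim. One small correction in the ordering remark: antisymmetry and the factor $\bigl((-1)^b\bigr)^a$ coming from $\mathrm{lc}_y\,m(x-y)$ are two computations of the same sign, not two separate factors to multiply (multiplying them would cancel), so you can state the exact sign $(-1)^{ab}$ that the lemma asserts.
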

	
	\begin{proof}
		Monicity of $t$ gives $\Res_y(t,m(x-\cdot))=\prod_i m(x-\alpha_i)$; each
		factor is monic of degree $b$ with roots $\alpha_i+\beta_j$. Coefficients
		are symmetric functions of the $\alpha_i$ with integer values, hence in
		$\Z$. The gcd statement is the nullstellensatz in one variable.
	\end{proof}
	
	\subsection{Spin normalization at $2$}
	\label{ss:spin-normalization}
	
	The eigenvalue laws of Theorem~\ref{thm:transfer-framework} use the
	normalization of the spin parameter at $2$ recorded in
	Table~\ref{tab:spin-normalization}.
	
	\begin{table}[ht]
		\centering
		\begin{tabular}{@{}lll@{}}
			\toprule
			Arthur type & Spin roots at $2$ & $B_2(2)$-eigenvalue \\
			\midrule
			Eisenstein & $\{1,2,4,8\}$ & $15$ \\
			Saito--Kurokawa, $g\subset\snew_4$ & $\{\alpha_g,\beta_g,2,4\}$ & $a_2(g)+6$ \\
			Yoshida, pair $(f,g)$ & $\{2\alpha_f,2\beta_f,\alpha_g,\beta_g\}$ & $a_2(g)+2a_2(f)$ \\
			General type & irreducible spin & $a_2(F)$ \\
			\bottomrule
		\end{tabular}
		\caption{Spin normalization at $2$ and the resulting eigenvalue laws.}
		\label{tab:spin-normalization}
	\end{table}
	
	The multiplier on each $\mathrm{GL}_2$-slot is forced by the
	central-character constraint: the product of the four spin roots equals
	$2^{4k-6}=64$ at $k=3$. This gives $16m^2=64$, i.e.\ $m=2$, for the
	weight-$2$ slot of type (Y), and the two Saito--Kurokawa slots of type (P)
	carry the Satake multipliers $2^{k-2}$ and $2^{k-1}$, equal to $2$ and $4$
	at $k=3$, whence the shift $+6$ and the doubling $2a_2(f)$.
	
	\subsection{Consolidated notation}
	\label{ss:notation}
	
	The catalogue and the trace formula are stated in terms of the following
	weight-$3$ Siegel--paramodular dimensions and their combinations.
	
	\begin{table}[ht]
		\centering
		\renewcommand{\arraystretch}{1.25}
		\begin{tabular}{@{}l p{0.62\linewidth}@{}}
			\toprule
			Symbol & Meaning \\
			\midrule
			$h_2=h_2(p)$ & class number of the principal genus $=\#$ vertices of $G_p$\\
			$h=h(p)$ & supersingular elliptic class number ($=\#$ product factors)\\
			$T_1=T_1(p)$ & type number of the principal genus $=\#$ $\sigma$-orbits\\
			$S_3(\Gamma_0(p)),\,S_3(\Gamma_0'(p)),\,S_3(K(p))$ & weight-$3$ Siegel, Klingen, paramodular cusp spaces\\
			$S_3^{\pm}(K(p))$ & paramodular Atkin--Lehner (Fricke) eigenspaces\\
			$\dim S_k^{\pm}$ & AL-eigenspaces of $\snew_k$, $k\in\{2,4\}$\\
			$\mathrm{ssn}(p)$ & $\dim S_2^+\dim S_4^++\dim S_2^-\dim S_4^-$ (same-sign Yoshida)\\
			$\mathrm{cross}(p)$ & $\dim S_2^-\dim S_4^++\dim S_2^+\dim S_4^-$ (opposite-sign Yoshida)\\
			$\Delta_{\mathrm{III}}(p)$ & $\dim S_3(\Gamma_0(p))-\mathrm{ssn}(p)$ (general-type Siegel meter)\\
			$\delta(p)$ & $\dim S_3(K(p))-\dim S_4^-=\deg N_p$ (number of paramodular non-lifts)\\
			$d(p)$ & $\delta(p)-2\dim S_3^+(K(p))$ (Fricke-signed non-lift defect)\\
			$c_0(\Pi_p)$ & $2a-c-2b$, parahoric-invariant index in Roberts--Schmidt coordinates \cite{RobertsSchmidt}\\
			$\Va_p$ & monic, $\Va_p^2$ the type-$\Va$ block; $\deg\Va_p=\tfrac12\bigl(h_2-1-\dim\snew_4-\mathrm{cross}-\delta\bigr)$\\
			$R(\pi)$ & Fricke involution on the \emph{principal} genus $M_{0,0}(\Upr(p))$; the subscript is omitted throughout\\
			$R_{\mathrm{npg}}(\pi)$ & the same involution on the non-principal genus\\
			\bottomrule
		\end{tabular}
		\caption{Consolidated notation for the weight-$3$ dimension data. The local
			index $c_0$ labels the parahoric-restriction type of $\Pi_p$; its values
			for the relevant types are read from \cite[Table~A.15]{RobertsSchmidt} (as
			in \cite[\S3]{Ibukiyama2018conj}).}
		\label{tab:notation}
	\end{table}
	
	We record the two splittings used repeatedly, both of which follow from the
	definitions in the table:
	\begin{align}
		\dim S_3(\Gamma_0(p)) &= \mathrm{ssn}(p)+\Delta_{\mathrm{III}}(p),\\
		\dim S_3(K(p)) &= \dim S_4^-+\delta(p).
	\end{align}
	The first separates the Siegel cusp space into its same-sign Yoshida part
	and a general-type remainder; the second separates the paramodular cusp
	space into its Saito--Kurokawa part (in bijection with $S_4^-$) and the
	non-lifts counted by $\delta(p)$. Every dimension in
	Table~\ref{tab:notation} is a closed form in $p$ by
	\cite{Ibukiyama2007dim,Ibukiyama-dim}.
	
	Read together, the table encodes one bookkeeping identity: every
	eigensystem is Eisenstein, Saito--Kurokawa, Yoshida, type-$\Va$, or
	general-type, so that
	$h_2=1+\dim\snew_4+\mathrm{cross}(p)+2\deg\Va_p+\delta(p)$, which is the
	definition of $\deg\Va_p$ read backwards. The trace formula refines this
	count by Atkin--Lehner signs: the Yoshida part enters through
	$\dim S_2^-\dim S_4^+-\dim S_2^+\dim S_4^-$, the general-type part through
	the defect $d(p)$, which satisfies $d\le\delta$ with
	$\delta-d=2\dim S_3^+(K(p))$ even; its non-negativity is the sign bias
	proved later and is assumed nowhere. The local index $c_0(\Pi_p)$ enters
	no dimension count; it labels the parahoric type of each eigensystem and
	drives the block multiplicities.
	
	\subsection{Certificate conventions}
	\label{ss:certificate-conventions}
	
	For the finite verification range $11\le p\le149$, the archived data for
	each prime contain an integer matrix $M_p$, automorphism weights $e_i$, an
	involution permutation $\sigma_p$ inducing $R(\pi)$, and the split-side
	block polynomials with their Fricke signs. The verification checks:
	\begin{enumerate}[label=\textup{(Cert\arabic*)},leftmargin=3.7em]
		\item every row of $M_p$ sums to $15$, the Mestre symmetry
		\eqref{eq:mestre} holds, and the weights satisfy the mass identity;
		\item $\sigma_p^2=1$, $\sigma_pM_p=M_p\sigma_p$, and the weights are
		$\sigma_p$-invariant;
		\item the characteristic polynomial of $M_p$ divides exactly by the
		Eisenstein, Saito--Kurokawa, and Yoshida blocks, leaving
		$\Va_p(x)^2N_p(x)$;
		\item for every irreducible factor, the signed isotypic trace
		$\operatorname{Tr}(R\,h_f(M_p))$ matches the predicted signed multiplicity.
	\end{enumerate}
	The fourth check is performed modulo the prime $q=2^{61}-1$. Since the
	signed trace on an isotypic factor is an integer bounded in absolute value
	by the relevant multiplicity, and $2h_2(p)\le2866<q$ throughout the
	certified range, the modular value determines the integer trace once the
	required cofactor inversions are verified factor by factor.
	
	\section{Transfer framework and eigenvalue laws}
	\label{sec:transfer}
	
	This section explains the automorphic framework behind the eigenvalue laws
	of Theorem~\ref{thm:transfer-framework} and the factorization in
	Conjecture~\ref{conj:eigenvalue-sign}. Two questions must be
	distinguished. The transfer and the eigenvalue formulas apply to every
	principal-genus eigensystem that occurs. By contrast, the question of
	which parameters occur in the principal genus, and with what
	multiplicity, is conjectural in general.
	
	\subsection{Block degrees and their non-negativity}
	\label{ss:block-degrees}
	
	The factorization predicted in
	Conjecture~\ref{conj:eigenvalue-sign} has two non-lift degrees. The degree
	$\delta(p)=\deg N_p$ counts paramodular non-lifts; the degree
	$\deg\Va_p$ counts type-$\Va$ pairs. The signed quantity $d(p)$ plays a
	different role: it measures the Fricke imbalance in the general-type
	block. In particular, $d(p)$ is not used in the factorization itself.
	
	\begin{lemma}\label{lem:positivity}
		For every prime $p\ge11$,
		\begin{enumerate}[label=\textup{(\roman*)},leftmargin=2.3em]
			\item $\delta(p)\ge0$;
			\item $\deg\Va_p\ge0$ for $11\le p\le149$, and for every $p\ge11$
			under Conjecture~\ref{conj:eigenvalue-sign}.
		\end{enumerate}
		Statement \textup{(i)} is unconditional; neither statement uses the
		sign of $d(p)$.
	\end{lemma}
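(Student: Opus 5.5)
For (i), the plan is to read $\delta(p)$ as a difference of dimensions and exhibit the subtracted space inside the larger one. By \eqref{eq:delta-def}, $\delta(p)=\dim S_3(K(p))-\dim S_4^-$. The key input is the Saito--Kurokawa (Gritsenko) lift. For prime level it gives an injective linear map from $S_4^-=\snew_4(\Gamma_0(p))^-$ (equivalently, weight-$4$ newforms with root number $+1$) into the paramodular space $S_3(K(p))$. The map is injective because the lift of a nonzero eigenform is a nonzero paramodular eigenform with spin $L$-function $L(s,g)\zeta(s-\tfrac12)\zeta(s+\tfrac12)$, up to normalization, and distinct Hecke eigenforms $g$ give distinct eigensystems. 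Hence $\dim S_4^-\le\dim S_3(K(p))$, which is $\delta(p)\ge0$.

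The recorded splitting $\dim S_3(K(p))=\dim S_4^-+\delta(p)$ in \S\ref{ss:notation} is exactly the statement that the image of this lift is the Saito--Kurokawa part. In the write-up I would cite the Gritsenko lift together with the paramodular classification (Schmidt, Roberts--Schmidt) for the injectivity and the identification of the image. As a fallback, one can compare the closed forms of $\dim S_3(K(p))$ from \cite{Ibukiyama-dim} with $\dim S_4^-$ from the Atkin--Lehner trace formula. This argument never touches $S_3^+(K(p))$, so the sign of $d(p)$ plays no role.

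For (ii), the definition in Table~\ref{tab:notation} gives $2\deg\Va_p=h_2-1-\dim\snew_4-\mathrm{cross}(p)-\delta(p)$. Under Conjecture~\ref{conj:eigenvalue-sign}, the factorization \eqref{eq:C1-factorization} exhibits $\Va_p(x)^2$ as an actual factor of $\charp B_2(2)$. Comparing degrees with the degrees of the other blocks, namely $1$, $\dim\snew_4$, $\mathrm{cross}(p)$ (by Lemma~\ref{lem:resultant}, summing $ab$ over the pairs) and $\delta(p)$, identifies $\deg\Va_p$ with the degree of a polynomial factor. Hence $\deg\Va_p\ge0$. In the range $11\le p\le149$ the same conclusion follows unconditionally from Theorem~\ref{thm:certified-spectra}, since the certificate (Cert3) checks that the residual after exact division is $\Va_p^2N_p$ with $\deg N_p=\delta(p)$.

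The main obstacle is making (i) genuinely unconditional, that is, citing precisely that the weight-$3$ Gritsenko lift from $S_4^-$ is injective at prime paramodular level with the sign convention used here ($S_4^-$ rather than $S_4^+$). I would check the convention against \eqref{eq:ap-sign} and the local type IIb/VIb data of \cite{RobertsSchmidt}.
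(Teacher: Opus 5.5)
Your proposal is correct and follows the paper's proof. For (i) the paper likewise uses that the Gritsenko (Saito--Kurokawa) lifts form a subspace of $S_3(K(p))$ of dimension $\dim S_4^-$. For (ii) it likewise appeals to the certified factorizations for $11\le p\le149$ and, under Conjecture~\ref{conj:eigenvalue-sign}, to the identification of the bookkeeping quantity $\deg\Va_p$ with the degree of an actual factor of $\charp B_2(2)$. One harmless slip: in weight $4$ the root number is $(-1)^{k/2}\eps_p=\eps_p$, so $S_4^-$ consists of the newforms with root number $-1$, not $+1$; since you work with $S_4^-$ itself, the argument is unaffected.
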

	
	\begin{proof}
		For \textup{(i)}, the Saito--Kurokawa part of $S_3(K(p))$ is the space
		of Gritsenko lifts, of dimension $\dim S_4^{-}(\Gamma_0(p))$
		\cite{Gritsenko1995,IPY2013}; it is a subspace of $S_3(K(p))$, so
		$\delta(p)=\dim S_3(K(p))-\dim S_4^{-}\ge0$. In the certified range
		$\delta(p)=0$ for $p\le59$ (Theorem~\ref{thm:certified-spectra}).
		
		For \textup{(ii)}, the certified factorizations of
		Theorem~\ref{thm:certified-spectra} give $\deg\Va_p\ge0$ at every
		prime $11\le p\le149$, and under
		Conjecture~\ref{conj:eigenvalue-sign} the degree $\deg\Va_p$ counts
		the type-$\Va$ parameters occurring in the principal genus, so the
		inequality holds for every~$p$.
	\end{proof}
	
	\begin{remark}
		The bookkeeping definition of $\deg\Va_p$ makes $2\deg\Va_p$ a closed
		form in $p$ (Proposition~\ref{prop:VI}); its non-negativity for
		every prime is exactly the principal-genus multiplicity prediction for
		type~$\Va$, and is established here only in the certified range.
	\end{remark}
	
	\begin{remark}\label{rem:dp-status}
		The inequality $d(p)\ge0$ is a separate statement about the sign distribution
		of paramodular non-lifts. It is not needed in the proofs of the transfer
		theorem, the dimension bookkeeping, or the structural trace identity. It
		holds for every prime: the exact closed formula for $d(p)$ and its
		positivity are proved in the final section (Theorem~\ref{thm:bias}).
	\end{remark}
	
	\subsection{Automorphic decomposition and transfer}
	\label{ss:automorphic-transfer}
	
	Let $G=\GU_2(B)$. The finite-dimensional space
	$M_{0,0}(\Upr(p))$ decomposes into automorphic isotypic components
	\[
	M_{0,0}(\Upr(p))=\bigoplus_{\Pi}V_\Pi,
	\]
	where $\Pi$ runs over automorphic representations of $G$ with trivial
	archimedean component and unramified components at every $q\nmid p$.
	The dimension of $V_\Pi$ is
	\[
	\dim V_\Pi=m(\Pi)\dim\Pi_p^{U_{1,p}},
	\]
	where $m(\Pi)$ is the automorphic multiplicity. The commuting operators
	$B_2(2)$ and $R(\pi)$ preserve each $V_\Pi$.
	
	The Jacquet--Langlands-type transfer from the compact inner form to
	$\GSp_4$ identifies every occurring eigensystem with a split-side Arthur
	parameter. In the present weight and level, only four classes appear:
	Eisenstein, Saito--Kurokawa, Yoshida, and general type; the Borel and
	Klingen CAP classes do not contribute at this weight and level
	\cite[proof of Thm.~9.6]{DPRT}. The transfer and
	multiplicity statements used here are consequences of
	\cite{RoesnerWeissauer2021,vanHoften,DPRT}, organized by Arthur type as in
	\cite{Schmidt2018}.
	
	\begin{proposition}	\label{prop:arthur}
		Let $p\ge11$ and $G=\GU_2(B)$.
		\begin{enumerate}[label=\textup{(\alph*)},leftmargin=2.3em]
			\item Every simultaneous eigensystem of $\{B_2(2),R(\pi)\}$ on
			$M_{0,0}(\Upr(p))$ belongs to an automorphic isotypic component $V_\Pi$
			as above.
			\item Every such $\pi$ transfers to a split-side parameter of exactly one
			of the Arthur types \textup{(Eis)}, \textup{(P)}, \textup{(Y)}, or
			\textup{(G)}. Its Satake parameters agree with those of $\pi$ at every
			prime $q\nmid p$ under the unramified identification
			$G(\Q_q)\simeq\GSp_4(\Q_q)$.
			\item In the spin normalization
			\[
			Q_2(X)=1-\lambda_2X+\cdots+2^6X^4,
			\]
			the eigenvalue of $B_2(2)$ on $V_\Pi$ is the coefficient $\lambda_2$,
			namely
			\[
			\begin{array}{c|c|c}
				\text{type} & \text{spin roots at }2 & \lambda_2\\
				\hline
				\mathrm{Eis} & \{1,2,4,8\} & 15\\
				\mathrm{P} & \{\alpha_g,\beta_g,2,4\} & a_2(g)+6\\
				\mathrm{Y} & \{2\alpha_f,2\beta_f,\alpha_g,\beta_g\}
				& a_2(g)+2a_2(f)\\
				\mathrm{G} & \text{irreducible spin parameter} & a_2(F).
			\end{array}
			\]
		\end{enumerate}
	\end{proposition}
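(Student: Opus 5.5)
The plan is to prove the three parts in order, treating (a) as finite-dimensional linear algebra, (b) as citation and assembly of transfer results, and (c) as a computation of Satake parameters at $2$.

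\emph{Part (a).} The Brandt operators $B_n(2)$ for $2\nmid n$ or $p\nmid n$, together with $R(\pi)$, generate a commutative algebra. This algebra is $*$-closed for the mass inner product \eqref{eq:mass-inner}: self-adjointness of $B_2(2)$ is Lemma~\ref{lem:mestre}(d), and $R(\pi)$ is self-adjoint because it is an involution induced by a permutation preserving the weights $e_i$. The space $M_{0,0}(\Upr(p))$ is the space of $U_1$-invariant functions on the finite set $G(\Q)\backslash G(\mathbb A_f)$, since $G(\R)$ is compact modulo centre. As a $G(\mathbb A_f)$-module, $L^2(G(\Q)\backslash G(\mathbb A))^{\text{triv at }\infty}$ decomposes as a finite sum of irreducibles $\Pi$ with multiplicities $m(\Pi)$. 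Taking $U_1$-invariants gives $\bigoplus_\Pi V_\Pi$ with $\dim V_\Pi=m(\Pi)\dim\Pi_p^{U_{1,p}}\prod_{q\ne p}\dim\Pi_q^{K_q}$, and the last product equals $1$ by unramifiedness. Because $R(\pi)$ is the action of the normalizer element at $p$ and $B_2(2)$ is the spherical Hecke operator at $2$, both preserve each $V_\Pi$. Any simultaneous eigenvector can therefore be projected into some $V_\Pi$; more precisely, joint eigenspaces decompose compatibly, which gives (a).

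\emph{Part (b).} I would cite the global Jacquet--Langlands transfer of R\"osner--Weissauer \cite{RoesnerWeissauer2021} together with van Hoften \cite{vanHoften}. These produce, for each $\Pi$, a discrete automorphic representation (or Arthur parameter) of $\GSp_4$ with the same unramified components at all $q\ne p$, and with archimedean component in the holomorphic discrete-series (or limit) packet of weight $(3,3)$. The corresponding transfer of the trivial representation of the compact $G(\R)$ yields weight $3$. I would then apply Arthur's classification as organized in \cite{Schmidt2018}. The possible types are (G), (Y), (P), (Q), (B), and the one-dimensional (F); the latter gives Eis. Following \cite[proof of Thm.~9.6]{DPRT}, I would exclude (B) and (Q): at paramodular-type level $p$ with weight $3$ there are no Borel or Klingen CAP contributions, since their archimedean components are incompatible with a trivial representation of the compact form, or they require characters of conductor incompatible with prime level. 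Exactly-one-type follows from strong multiplicity one for parameters on $\mathrm{GL}_4$ via the spin lift.

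\emph{Part (c).} The Brandt operator $B_2(2)$ corresponds, by Hashimoto \cite{Hashimoto1980}, to the double coset $K\diag(1,1,2,2)K$, and its eigenvalue is the coefficient $\lambda_2$ of the spin $L$-factor in the classical normalization in which the product of the roots is $2^{4k-6}=2^6$. For each type I would write the spin parameter as a sum of Satake parameters of $\mathrm{GL}_2$ pieces and characters, scaled so that the central character constraint holds. For Eis this gives $\{1,2,4,8\}$ with sum $15$. For (P) the parameter is $g\oplus(\text{Speh})$, giving $\{\alpha_g,\beta_g\}$ together with $2^{k-2},2^{k-1}$, that is $2,4$. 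For (Y) the parameter is $f\otimes 2\oplus g$, where the twist $2$ makes $\alpha_f\beta_f\cdot 4\cdot\alpha_g\beta_g=2\cdot4\cdot 8=64$. Summing the roots gives the table.

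I expect the main obstacle to be (b). Specifically, I need to verify that the cited transfer genuinely applies at the principal maximal compact $U_1$, not only at the non-principal one treated in \cite{DPRT}, and that the CAP exclusions carry over to the principal genus. If this fails, I would fall back on the local classification: I would list the Iwahori-spherical representations of $\GU_2(B_p)$ having $U_1$-fixed vectors, and match them through the local Langlands correspondence to local components of types (Eis), (P), (Y), and (G) only.
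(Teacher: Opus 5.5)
Your proposal is correct and follows the paper's own route. Part (a) is the standard isotypic decomposition of the finite double-coset space, up to two harmless slips: only $G(\Q)\backslash G(\mathbb{A}_f)/\Upr(p)$ is finite, and only finitely many constituents have $\Upr(p)$-fixed vectors. Part (b) cites the transfer of \cite{RoesnerWeissauer2021,vanHoften} together with Arthur's classification as organized in \cite{Schmidt2018}, excluding the Borel and Klingen CAP types via \cite[proof of Thm.~9.6]{DPRT}. Part (c) sums the four spin roots, with the central-character constraint $\prod=2^6$ forcing the multiplier $2$ in type (Y) and the roots $2,4$ in type (P).

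The concern you raise is fair: the CAP exclusion in \cite{DPRT} is made in the non-principal-genus setting, and it is not obvious that it carries over to $U_1$. The paper does not treat this separately either; it relies on the same citations.
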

	
	\begin{proof}
		Part \textup{(a)} is the standard automorphic decomposition of the
		finite double-coset space. Part \textup{(b)} follows from the transfer to
		$\GSp_4$, together with the Arthur classification of the split side.
		Part \textup{(c)} follows by taking the sum of the four spin roots. For
		type \textup{(Y)}, the central-character constraint gives the multiplier
		$2$ on the weight-$2$ $\mathrm{GL}_2$-slot. For type \textup{(P)}, the two
		additional roots are $2$ and $4$. Thus the corresponding sums are
		$a_2(g)+2a_2(f)$ and $a_2(g)+6$.
	\end{proof}
	
	\begin{lemma}
		\label{lem:eis}
		The action of $R(\pi)$ on the principal-genus classes is a permutation.
		It fixes the constant functions. Consequently, the Eisenstein eigenvalue
		$15$ has $R(\pi)$-sign $+1$.
	\end{lemma}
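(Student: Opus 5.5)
The plan has three steps: show that $R(\pi)$ permutes the classes, show that every permutation operator fixes the constants, and then combine this with the simplicity of the Perron eigenvalue.

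\emph{Step 1: $R(\pi)$ is a permutation of classes.} I will start from the definition of $R(\pi)$ as the operator induced by the two-sided ideal $\pi$ of reduced norm $p$. For a principal-genus lattice $L_i$, the lattice $\pi L_i$ is again a left $\Onorm$-lattice in $B^2$. Its Hermitian form is scaled by $\mathrm{n}(\pi)=p$, so after rescaling the similitude it lies in the same genus. The genus is preserved because $\pi$ is two-sided: locally at $p$ it is given by a uniformizer $\Pi_p$ of $B_p$, which normalizes $U_1$; at all other places it is a unit.

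The class map $[L_i]\mapsto[\pi L_i]$ is well defined, since $\pi(Lg)=(\pi L)g$ for $g\in G(\Q)$. It is a bijection: because $\pi^2=p\Onorm$, we get $\pi(\pi L)=pL\simeq L$. Hence $R(\pi)f(L_i)=f(\pi L_i)$ is the permutation operator attached to an involution $\sigma$ of $\{1,\dots,h_2\}$. On the geometric side this is the permutation $\sigma_p$ of (Cert2). Moreover $e_{\sigma(i)}=e_i$, since $\Aut(\pi L)=\Aut(L)$ (automorphisms commute with left multiplication by the two-sided ideal).

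\emph{Step 2: the constants are fixed.} Any permutation operator satisfies $R(\pi)\mathbf 1=\mathbf 1$, so this step is immediate.

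\emph{Step 3: the sign on the Eisenstein line.} By Lemma~\ref{lem:vertices}(d) and Proposition~\ref{prop:frame}, the eigenvalue $15$ of $B_2(2)$ is simple, with eigenvector the constant function $\mathbf 1$ (every row of $M$ sums to $15$). Since $R(\pi)$ commutes with $B_2(2)$, it preserves this one-dimensional eigenspace. By Step 2 it acts on it by $+1$.

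The only real obstacle is in Step 1: confirming that $\pi L$ stays in the principal genus rather than moving to the non-principal one. To handle it, I would check locally that conjugation by $\Pi_p$ stabilizes the principal maximal compact $U_1$ in $G(\Q_p)$, for instance via the explicit description of $U_1$ as the stabilizer of $\Onorm_p^2$. The point is that $\Pi_p\Onorm_p^2$ is homothetic, up to similitude, to $\Onorm_p^2$.
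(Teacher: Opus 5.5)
Your proof is correct and follows the same route as the paper: $R(\pi)$ is a permutation, so it fixes the constant function, and the constant function spans the $15$-eigenline, so the sign is $+1$. The only differences are that you derive the permutation property yourself (from $\Pi_p 1_2$ normalizing $U_{1,p}$ and $\pi^2=p\Onorm$) where the paper cites Ibukiyama--Katsura, \S3.2, and that you state explicitly the simplicity of $15$, which the paper's one-line argument uses without saying so.
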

	
	\begin{proof}
		The matrix of $R(\pi)$ has exactly one entry equal to $1$ in each row
		\cite[\S3.2]{IbukiyamaKatsura}. Hence it fixes the constant functions.
		Since $B_2(2)$ has row sum $15$, the constant line is the Eisenstein line.
	\end{proof}
	
	\subsection{The local index and the multiplicity table}
	\label{ss:local-index}
	
	For a local representation $\Pi_p$, let $a,c,b$ be the dimensions of its
	fixed-vector spaces under the Klingen, Siegel, and paramodular parahorics,
	respectively. Following \cite[Table~A.15]{RobertsSchmidt} and the
	parahoric-restriction tables of \cite{Roesner2016}, define
	\begin{equation}\label{eq:c0}
		c_0(\Pi_p)=2a-c-2b.
	\end{equation}
	For the relevant Iwahori-spherical types, one has
	$c_0\in\{-1,0,1,2\}$, and $c_0=2$ occurs exactly for type Va.
	
	\begin{table}[ht]
		\centering
		\begin{tabular}{l|c|c}
			Arthur type of $\Pi$ &
			$\dim$ in $M_{0,0}(\Upr(p))$ &
			$\dim$ in $M_{0,0}(U_{\mathrm{npg}}(p))$\\
			\hline
			Eisenstein & $1$ & $1$\\
			Saito--Kurokawa lift of $g\subset\snew_4$ & $1$ &
			$\tfrac12\bigl(1-\eps_p(g)\bigr)$\\
			Yoshida, $\eps_p(f)\eps_p(g)=-1$ & $1$ & $0$\\
			Yoshida, $\eps_p(f)\eps_p(g)=+1$ & $0$ & $0$\\
			general type, local type Va at $p$ \textup{(}$c_0=2$\textup{)} & $2$ & $0$\\
			general type, local $c_0=1$ & $1$ & $1$\\
			types with $c_0\le0$ & $0$ & $0$
		\end{tabular}
		\caption{The two-column multiplicity table at weight $(3,0)$. The
			non-principal column is a theorem. The principal column is the content of
			Ibukiyama's conjectures and is verified by Theorem~\ref{thm:certified-spectra}
			in the range $11\le p\le149$.}
		\label{tab:twocol}
	\end{table}
	
	\begin{table}[ht]
		\centering
		\begin{tabular}{l|c}
			Iwahori-spherical type & $c_0=2a-c-2b$\\
			\hline
			types with a paramodular or Siegel new vector & $\le1$\\
			type Va & $2$
		\end{tabular}
		\caption{The local index from the parahoric fixed-vector dimensions.
			Type Va is the unique relevant type with $c_0=2$.}
		\label{tab:c0}
	\end{table}
	
	The rows are indexed by the global Arthur type of $\Pi$, refined by the
	local type at $p$ in the general-type case. The label type~$\Va$ names the
	local component at $p$ and does not by itself determine the global
	parameter: the Yoshida contribution is counted separately through
	$\mathrm{cross}(p)$, so the type-$\Va$ row consists of general-type
	parameters. The entry $2$ in the principal column is a multiplicity in
	$M_{0,0}(\Upr(p))$, not a dimension of paramodular new vectors: type
	$\Va$ has no paramodular vector at level $p$, its first one occurring at
	level $p^{2}$ \cite[Table~5.2]{RobertsSchmidtTables}, which is the $0$ in
	the non-principal column.
	
	\begin{remark}[The local packet at $p$ and the sign in
		\Cref{conj:eigenvalue-sign}\textup{(iii)}]
		\label{rem:localpacket}
		The $\pm$ splitting asserted for a type-$\Va$ pair is a consequence of
		its multiplicity, not an independent assumption. Write $\xi$ for the
		unramified quadratic character of $\Q_p^\times$. The $L$-parameter of
		type $\Va$ is $\varphi=\varphi_1\oplus\varphi_2$ with
		$\varphi_2=\varphi_1\otimes\xi$, each $\varphi_i$ irreducible of
		dimension $2$; in particular $\varphi_1\not\cong\varphi_2$ and
		$\det\varphi_1=\det\varphi_2$
		\cite[Tables~3.1--3.2]{RobertsSchmidtTables}, and on the split group the
		packet is $\{\Va,\Va^{*}\}$ \cite[\S3]{Schmidt2018}. On the inner form
		$G(\Q_p)=\GU_2(B_p)$ the local Langlands correspondence of
		Gan--Tantono attaches to $\varphi$ a packet parametrized by
		$\widehat{B_\varphi}\smallsetminus\widehat{A_\varphi}$, of size $1$ or
		$2$, and for a parameter of the above shape the size is exactly $2$
		\cite[Main Thm.~(ii), \S7.2]{GanTantono}. Twisting by $\xi\circ\lambda$,
		where $\lambda$ is the similitude character, sends the parameter to
		$\varphi\otimes\xi=\varphi$
		\cite[Main Thm.~(iv)]{GanTantono}, so it permutes this packet. Since
		$\xi$ is unramified and $\lambda$ takes values in $\Z_p^\times$ on
		$\Upr(p)$, the twist acts trivially on $\Upr(p)$-fixed vectors, while
		$\lambda(\pi)=p$ gives $\xi(\lambda(\pi))=-1$: the twist negates
		$R(\pi)$. A member with a one-dimensional $\Upr(p)$-fixed line therefore
		cannot be isomorphic to its own twist, since its $R(\pi)$-eigenvalue
		$r=\pm1$ would satisfy $r=-r$. Hence the twist exchanges the two members
		and their $R(\pi)$-eigenvalues are opposite. The occurrence of both members is proved in
		Section~\ref{sec:occurrence} (Theorem~\ref{thm:occurrence}); what
		remains open in the type-$\Va$ row of Table~\ref{tab:twocol} is the
		local constant of Remark~\ref{rem:local-constant}.
	\end{remark}
	
	The principal column of Table~\ref{tab:twocol} is where the conjectural
	content lies. The existence of the Saito--Kurokawa and Yoshida liftings is
	known, as is the non-principal column. The general-type rows, including the
	type-Va case, are settled in Section~\ref{sec:occurrence} up to the local
	fixed-vector constants of Remarks~\ref{rem:local-constant}
	and~\ref{rem:other-rows}; what remains open in general is the
	principal-column multiplicity statement for the Saito--Kurokawa and Yoshida
	rows.
	
	\subsection{Dimension bookkeeping}
	\label{ss:dimension-bookkeeping}
	
	The block degrees in Section~\ref{sec:statements} are equivalent to the
	known dimension identity of Ibukiyama. We record this equivalence to make
	clear that the degree count is not an additional theorem.
	
	\begin{proposition}
		\label{prop:VI}
		The identity
		\[
		h_2(p)=1+\dim\snew_4+\mathrm{cross}(p)
		+2\deg\Va_p+\delta(p)
		\]
		together with
		\[
		\deg\Va_p
		=\dim S_3(\Gamma_0'(p))-
		\dim S_3(K(p))-\tfrac12\Delta_{\mathrm{III}}(p)-\tfrac12\delta(p)
		\]
		is equivalent, as an identity of closed forms, to
		\begin{equation}\label{eq:ibu-dim}
			2\dim S_3(\Gamma_0'(p))-
			\dim S_3(\Gamma_0(p))-2\dim S_3(K(p))
			=h_2(p)-1-(\dim\snew_2+1)\dim\snew_4,
		\end{equation}
		which is \cite[Thm.~3.1, $j=0$]{Ibukiyama2018conj}.
	\end{proposition}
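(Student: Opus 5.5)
The claim is an identity of closed forms, so I will prove it by pure algebra from the definitions in Table~\ref{tab:notation}. First I rewrite everything in terms of the four dimensions $\dim S_3(\Gamma_0'(p))$, $\dim S_3(\Gamma_0(p))$, $\dim S_3(K(p))$ and $h_2(p)$, together with the elliptic data $\dim S_k^\pm$. The two splittings recorded after Table~\ref{tab:notation} do this: $\Delta_{\mathrm{III}}=\dim S_3(\Gamma_0(p))-\mathrm{ssn}(p)$ and $\delta=\dim S_3(K(p))-\dim S_4^-$. I then substitute the displayed expression for $\deg\Va_p$ into the bookkeeping identity. This gives
\[
2\deg\Va_p+\delta(p)=2\dim S_3(\Gamma_0'(p))-2\dim S_3(K(p))-\Delta_{\mathrm{III}}(p),
\]
because the two $\delta$ terms cancel. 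Since $\Delta_{\mathrm{III}}=\dim S_3(\Gamma_0(p))-\mathrm{ssn}(p)$, the bookkeeping identity becomes
\[
h_2-1-\dim\snew_4-\mathrm{cross}-\mathrm{ssn}=2\dim S_3(\Gamma_0'(p))-\dim S_3(\Gamma_0(p))-2\dim S_3(K(p)).
\]

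The key elementary step is that $\mathrm{cross}(p)+\mathrm{ssn}(p)=(\dim S_2^++\dim S_2^-)(\dim S_4^++\dim S_4^-)=\dim\snew_2\dim\snew_4$. This holds because $\snew_k=S_k^+\oplus S_k^-$ by Atkin--Lehner theory at prime level. Hence the left side equals $h_2-1-(\dim\snew_2+1)\dim\snew_4$, which is exactly \eqref{eq:ibu-dim}. Every step is reversible: the substitution is an invertible linear change of variables once $\deg\Va_p$ is defined by the displayed formula. So the pair of identities holds if and only if \eqref{eq:ibu-dim} holds.

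To finish, I invoke \cite[Thm.~3.1, $j=0$]{Ibukiyama2018conj} for the truth of \eqref{eq:ibu-dim}. The one point needing care, which I expect to be the main obstacle, is checking that the spaces match Ibukiyama's. Specifically, $S_3(\Gamma_0'(p))$ must be the Klingen-level space and $S_3(\Gamma_0(p))$ the Siegel-level space, at weight $\det^3$ with $j=0$. The class number $h_2$ on his side must be that of the principal genus $\Upr(p)$, not the non-principal one. I would check this by comparing his normalization with small primes, for example $p=11$ or $13$, where $h_2$ and all the dimensions are tabulated. There is also a possible subtlety: if the bookkeeping identity is instead read as the definition of $\deg\Va_p$, the second formula must be shown to agree with it. That is exactly the same computation run backwards, so the equivalence is symmetric.
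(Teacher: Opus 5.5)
Your proposal is correct and is essentially the paper's proof. The paper substitutes $\dim S_3(\Gamma_0(p))=\mathrm{ssn}(p)+\Delta_{\mathrm{III}}(p)$ and $\dim\snew_2\dim\snew_4=\mathrm{ssn}(p)+\mathrm{cross}(p)$ into \eqref{eq:ibu-dim}, cancels the same-sign term, and rearranges; this is your computation run in the opposite direction, and as you noticed, the $\delta$ terms cancel, so the splitting of $\dim S_3(K(p))$ is not actually needed.
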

	
	\begin{proof}
		Substitute
		\[
		\dim S_3(\Gamma_0(p))=\mathrm{ssn}(p)+\Delta_{\mathrm{III}}(p),
		\qquad
		\dim S_3(K(p))=\dim S_4^-+\delta(p),
		\]
		and
		\[
		\dim\snew_2\dim\snew_4
		=\mathrm{ssn}(p)+\mathrm{cross}(p)
		\]
		into \eqref{eq:ibu-dim}. The same-sign term cancels, and rearrangement
		gives exactly the asserted block-degree formula.
	\end{proof}
	
	\begin{remark}
		\label{rem:transfer-scope}
		The transfer theorem and Proposition~\ref{prop:arthur} determine the
		$B_2(2)$-eigenvalue attached to an occurring parameter. They do not
		determine which split-side parameters occur in the principal genus, nor
		their individual multiplicities. For the general-type rows those assertions are settled in
		Section~\ref{sec:occurrence}; for the Saito--Kurokawa and Yoshida rows
		they belong to Conjecture~\ref{conj:eigenvalue-sign} and are
		established in this paper only in the finite range of
		Theorem~\ref{thm:certified-spectra}.
	\end{remark}
	
	\section{The occurrence theorem for the type-Va block}
	\label{sec:occurrence}
	
	This section proves the occurrence half of
	Conjecture~\ref{conj:eigenvalue-sign}\textup{(iii)}. The input is the
	global lifting theorem of R\"osner and Weissauer for inner forms of
	$\GSp_4$ that are anisotropic at the archimedean place
	\cite[Thm.~11.4]{RoesnerWeissauer2021}. Applied to $G=\GU_2(B)$ at
	trivial weight, it shows that the weak packet of a general-type
	automorphic representation is the full product of its local $L$-packets
	and that every member occurs with multiplicity one. The $\pm$ splitting
	of Remark~\ref{rem:localpacket} then holds unconditionally, so the
	type-Va contribution to $\charp B_2(2)$ is an exact square split evenly
	by $R(\pi)$ at every prime. The open content of the type-Va row of
	Table~\ref{tab:twocol} is thereby reduced to a single local constant
	(Remark~\ref{rem:local-constant}).
	
	\subsection{Applicability of the lifting theorem}
	\label{ss:rw-applicability}
	
	\begin{lemma}
		\label{lem:rw-applicable}
		The group $G=\GU_2(B)$ satisfies the standing hypotheses of
		\cite[\S1]{RoesnerWeissauer2021}: its center is
		$Z\cong\mathbb{G}_m$, so $H^1(\Q,Z)=1$, and its derived group is an
		inner form of $\Sp_4$, hence simply connected. The real group
		$G(\R)$ is anisotropic modulo center. Consequently Conjecture~7.5 of
		\cite{RoesnerWeissauer2021} holds for $G$, the virtual
		multiplicities of \cite[\S9]{RoesnerWeissauer2021} are cuspidal
		multiplicities, and \cite[Thm.~11.4]{RoesnerWeissauer2021} applies
		to every automorphic representation of $G(\mathbb{A})$ of general
		type with trivial archimedean component. No regularity hypothesis on
		the weight is required.
	\end{lemma}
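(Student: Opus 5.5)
The plan is to verify the three structural hypotheses of \cite[\S1]{RoesnerWeissauer2021} directly from the description of $G=\GU_2(B)$ as the similitude group of the standard quaternion Hermitian form. From them we deduce the cuspidality and applicability clauses formally, using the results of \cite{RoesnerWeissauer2021} cited in the statement.

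\emph{Center.} An element of $\GU_2(B)$ commuting with all of $G$ commutes with $\GU_2(B)(\overline{\Q})\cong\GSp_4(\overline{\Q})$. It is therefore a scalar in the center $\Q^\times$ of $M_2(B)$, and every scalar $t$ is a similitude with multiplier $t^2$. Hence $Z\cong\mathbb{G}_m$. Hilbert~90 gives $H^1(\Q,\mathbb{G}_m)=1$.

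\emph{Derived group.} The derived group is the kernel of the multiplier, namely the unitary group $\mathrm{U}_2(B)$ of the Hermitian form. Over $\overline{\Q}$ it becomes $\Sp_4$, so it is an inner form of $\Sp_4$. Simple connectivity is a geometric property preserved under inner twisting, and $\Sp_4$ is simply connected.

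\emph{Archimedean anisotropy.} Since $B\otimes\R=\mathbb{H}$ and $H$ is positive definite, $G^{\mathrm{der}}(\R)=\mathrm{U}_2(\mathbb{H})=\mathrm{Sp}(2)$ is compact. Thus $G(\R)/Z(\R)$ is compact, i.e.\ $G$ is anisotropic modulo center at $\infty$.

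For the consequences, I would invoke the place in \cite{RoesnerWeissauer2021} where Conjecture~7.5, the stabilization or comparison hypothesis, is established for inner forms compact at infinity. The mechanism is that the trace formula for $G$ has no continuous spectrum and no non-elliptic terms, because $G(\R)$ is compact mod center. For the same reason $L^2(G(\Q)\backslash G(\mathbb{A})/Z(\mathbb{A}))$ is discrete and every automorphic form is cuspidal, so the virtual multiplicities of \S9 coincide with genuine cuspidal multiplicities. Theorem~11.4 then applies to any general-type $\Pi$. Trivial archimedean component is allowed because the regularity hypothesis there is needed only to control the contributions of non-tempered or Eisenstein terms on non-compact forms, and these are absent here.

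The main obstacle is this last point. One must check precisely which hypothesis of \cite{RoesnerWeissauer2021} replaces regular weight when $G(\R)$ is compact, and confirm that the authors indeed treat the anisotropic case without a weight restriction. I would locate the relevant remark or proof step in their \S7--\S11 and cite it explicitly, rather than argue it afresh.
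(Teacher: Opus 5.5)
This is correct and is essentially the paper's proof: you make the same structural checks (scalar center with Hilbert~90, derived group the multiplier-one subgroup, which is a simply connected inner form of $\Sp_4$, and $G(\R)$ compact modulo center because $B$ is definite at $\infty$), and then read the three consequences off \cite{RoesnerWeissauer2021}. The one adjustment is in how those consequences are located: the paper obtains Conjecture~7.5 not from trace-formula simplifications but from the finiteness of the double coset spaces, whose cohomology is therefore concentrated in degree~$0$ (giving $d(G)=1$ and virtual multiplicities equal to cuspidal ones, the case treated after \cite[Cor.~7.4]{RoesnerWeissauer2021}), and it settles your regularity concern by noting that \cite[Thm.~11.4]{RoesnerWeissauer2021} is itself stated for archimedean-anisotropic inner forms with no regularity condition, the regular-weight version for the other inner forms being their Theorem~11.5.
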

	
	\begin{proof}
		The center consists of the scalars $z\cdot1_2$ with
		$z\in\mathbb{G}_m$, and $H^1(\Q,\mathbb{G}_m)=1$ by Hilbert's
		Theorem~90. The derived group is the reduced-norm-one subgroup, an
		inner form of the simply connected group $\Sp_4$. Since $B$ ramifies
		at $\infty$, the Hermitian form is definite and $G(\R)$ is compact
		modulo center; the associated double coset spaces are finite, so
		their cohomology is concentrated in degree $0$ and Conjecture~7.5 of
		\cite{RoesnerWeissauer2021} holds, with $d(G)=1$ and virtual equal
		to cuspidal multiplicities; this is the algebraic modular form case
		treated after \cite[Cor.~7.4]{RoesnerWeissauer2021}, and the
		coefficient normalization is \cite[Lem.~9.1,
		Prop.~9.2]{RoesnerWeissauer2021}. Theorem~11.4 of
		\cite{RoesnerWeissauer2021} is stated for inner forms anisotropic at
		the archimedean place with no regularity condition on the
		coefficient system; the regular-weight statement for the remaining
		inner forms is their Theorem~11.5 and is not used here. The same
		specialization, at scalar weight $3$ on the definite form, is
		applied in the proof of \cite[Prop.~12.3]{RoesnerWeissauer2021},
		where an additional argument is needed only when the inner form is
		split at the archimedean place.
	\end{proof}
	
	\subsection{Occurrence and sign splitting}
	\label{ss:occurrence-theorem}
	
	Write $U_{1,p}\subset G(\Q_p)$ for the principal-genus parahoric, so
	that $\Upr(p)$ has component $U_{1,p}$ at $p$ and hyperspecial
	components elsewhere, and let $\xi$ and $\lambda$ be as in
	Remark~\ref{rem:localpacket}.
	
	\begin{theorem}[Occurrence and sign splitting for type Va]
		\label{thm:occurrence}
		Let $p\ge11$ and let $\Pi$ be an irreducible automorphic
		representation of $G(\mathbb{A})$ of general type contributing to
		$M_{0,0}(\Upr(p))$ whose local component $\Pi_p$ lies in the
		$L$-packet attached by Gan--Tantono to a parameter of type $\Va$;
		the packet is $\{\Pi_p^+,\Pi_p^-\}$ with
		$\Pi_p^-\cong(\xi\circ\lambda)\otimes\Pi_p^+$, denoted
		$\{\Va_G,\chi_0\otimes\Va_G\}$ in
		\cite[Table~3]{RoesnerWeissauer2021}. Write $\Pi^{\pm}$ for the
		representations with the same components as $\Pi$ away from $p$ and
		with $p$-component $\Pi_p^{\pm}$, and
		$V[\Pi]\subset M_{0,0}(\Upr(p))$ for the sum of the corresponding
		isotypic subspaces. Then:
		\begin{enumerate}[label=\textup{(\alph*)},leftmargin=2.3em]
			\item Both $\Pi^+$ and $\Pi^-$ are automorphic, each with
			multiplicity one, and they exhaust the weak equivalence class of
			$\Pi$.
			\item $V[\Pi]=(\Pi_p^+)^{U_{1,p}}\oplus(\Pi_p^-)^{U_{1,p}}$. The
			two summands are $R(\pi)$-stable of equal dimension
			$d_p(\Pi):=\dim(\Pi_p^+)^{U_{1,p}}\ge1$, and the multiplicities
			of the $R(\pi)$-eigenvalues $+1$ and $-1$ on the two summands
			are interchanged. In particular $\dim V[\Pi]=2\,d_p(\Pi)$, the
			eigenvalues $+1$ and $-1$ of $R(\pi)$ on $V[\Pi]$ each occur
			with multiplicity $d_p(\Pi)$, and
			$\Tr\bigl(R(\pi)\mid V[\Pi]\bigr)=0$.
			\item Every $T(n)$ with $p\nmid n$ acts on $V[\Pi]$ by a single
			scalar; in particular the eigensystems in $V[\Pi]$ agree at
			every such $T(n)$ and are separated only by $R(\pi)$. The
			contribution of the weak packet of $\Pi$ to $\charp B_2(2)$ is
			$(x-\lambda_2(\Pi))^{2d_p(\Pi)}$ for the common
			$B_2(2)$-eigenvalue $\lambda_2(\Pi)$ of
			Proposition~\ref{prop:arthur}, and the total type-Va
			contribution to $\charp B_2(2)$ is an exact square on which
			$R(\pi)$ has trace zero and equal $\pm1$ eigenvalue
			multiplicities.
			\item Conversely, every cuspidal automorphic representation
			$\Pi'$ of $\GSp_4(\mathbb{A})$ of general type, cohomological of
			weight $(3,0)$, unramified outside $p$, whose $p$-component lies
			in the packet $\{\Va,\Va^{*}\}$, is a lifting from $G$ of the
			above kind, and the induced correspondence of parameters between
			such $\Pi'$ and the weak packets in \textup{(a)} is a bijection.
		\end{enumerate}
	\end{theorem}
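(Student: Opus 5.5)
The argument splits into the global input (a), the local input of Remark~\ref{rem:localpacket} for (b), a formal consequence for (c), and the converse (d).

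For (a), apply \cite[Thm.~11.4]{RoesnerWeissauer2021} to $\Pi$; this is legitimate by Lemma~\ref{lem:rw-applicable}, since $\Pi$ is of general type with trivial archimedean component. The theorem identifies the weak equivalence class of $\Pi$ with the full product $\bigotimes_v\Pi(\varphi_v)$ of local $L$-packets, each member occurring with multiplicity one. At every $q\ne p$ the parameter is unramified and the relevant packet member with a hyperspecial-fixed vector is unique. At $\infty$ the trivial representation is the only member with nonzero contribution to $M_{0,0}$. At $p$ the Gan--Tantono packet is $\{\Pi_p^+,\Pi_p^-\}$ with $\Pi_p^-\cong(\xi\circ\lambda)\otimes\Pi_p^+$ (Remark~\ref{rem:localpacket}). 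Hence exactly the two global representations $\Pi^\pm$ are relevant, both are automorphic, and each has $m(\Pi^\pm)=1$.

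For (b), use the isotypic decomposition $\dim V_\Pi=m(\Pi)\dim\Pi_p^{U_{1,p}}$ of \S\ref{ss:automorphic-transfer} together with $m=1$; this identifies $V[\Pi]$ with $(\Pi_p^+)^{U_{1,p}}\oplus(\Pi_p^-)^{U_{1,p}}$, each summand being $R(\pi)$-stable because $R(\pi)$ is induced by the normalizing element $\pi\in G(\Q_p)$. The twist by $\xi\circ\lambda$ gives a linear isomorphism $\Pi_p^+\to\Pi_p^-$ that is the identity on the underlying space. Since $\xi\circ\lambda$ is trivial on $U_{1,p}$, this map carries $U_{1,p}$-fixed vectors to $U_{1,p}$-fixed vectors, so the two summands have equal dimension $d_p(\Pi)$. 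Since $\xi(\lambda(\pi))=-1$, the action of $\pi$ on $\Pi_p^-$ is minus its action on $\Pi_p^+$, so the $\pm1$ multiplicities are interchanged. The trace-zero and equal-multiplicity statements follow at once. We still need $d_p(\Pi)\ge1$. By hypothesis $\Pi$ contributes, so one of the two summands is nonzero, and the equality of dimensions then makes both nonzero.

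For (c), the Hecke operators $T(n)$ with $p\nmid n$ act through the unramified components, which coincide for $\Pi^+$ and $\Pi^-$. So $B_2(2)$ acts on $V[\Pi]$ by the single scalar $\lambda_2(\Pi)$ of Proposition~\ref{prop:arthur}, giving the factor $(x-\lambda_2(\Pi))^{2d_p(\Pi)}$. Taking the product over the distinct weak packets of type $\Va$ gives an exact square, namely $\prod(x-\lambda_2)^{d_p}$ squared, and its total $R(\pi)$-trace is zero.

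For (d), start from $\Pi'$ on $\GSp_4$ and apply the converse (existence) direction of the same lifting theorem, in the form used in \cite[Prop.~12.3]{RoesnerWeissauer2021}. Weight $(3,0)$ on $\GSp_4$ corresponds to trivial weight on the definite inner form, and general type together with the $\Va$-parameter at $p$ guarantees a transferable parameter; the resulting packet on $G$ is then of the kind treated in (a). Bijectivity follows from strong multiplicity one on the $\GSp_4$ side for general type, combined with the fact that the parameter determines the weak packet. I expect (d) to be the main obstacle: one has to check that the theorem's hypothesis on local transfer at $p$ holds, i.e.\ that the Gan--Tantono packet on $\GU_2(B_p)$ attached to $\varphi$ is nonempty and has the $\Va$-packet as its Jacquet--Langlands counterpart. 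I would first try to read this off from \cite[Table~3]{RoesnerWeissauer2021}, which lists $\{\Va_G,\chi_0\otimes\Va_G\}$ as the inner-form packet corresponding to $\{\Va,\Va^{*}\}$.
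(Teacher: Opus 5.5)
Parts (a)--(c) follow the paper's argument: Lemma~\ref{lem:rw-applicable} with \cite[Thm.~11.4]{RoesnerWeissauer2021} for (a), the twist by $\xi\circ\lambda$ (trivial on $U_{1,p}$, equal to $-1$ on the element inducing $R(\pi)$) for (b), and common Satake data away from $p$ for (c). There is one imprecision in (a). You say only that one member of each local packet away from $p$ has the right fixed vectors. That shows $\Pi^{\pm}$ are the members of the weak class that contribute to $M_{0,0}(\Upr(p))$, but not that they \emph{exhaust} the weak class. For that you need, as the paper states, that the packets at $q\ne p$ (unramified parameters on the split group) and at $\infty$ (compact inner form) are singletons.

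The genuine gap is in (d). The claim that ``general type together with the $\Va$-parameter at $p$ guarantees a transferable parameter'' is exactly what has to be proved, and you leave it as an open obstacle. The missing input is the explicit criterion of \cite[Thm.~11.4(4)]{RoesnerWeissauer2021}, with the weight matching of \cite[Cor.~11.7]{RoesnerWeissauer2021}. It says that a cohomological general-type $\Pi'$ on $\GSp_4$ lifts from $G$ if and only if, at every finite place where $G$ is not split, its local $L$-packet is not fully Klingen induced. The only such place is $p$. The packet $\{\Va,\Va^{*}\}$ contains the supercuspidal $\Va^{*}$ \cite[\S3]{Schmidt2018}, so it is not the semisimplification of any parabolically induced representation. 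That one line is the whole verification, and it is phrased on the split side. You do not need to extract a local matching from Table~3, and the non-emptiness of the Gan--Tantono packet that you worry about is already in Remark~\ref{rem:localpacket}.

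Your bijectivity step also needs repair. Strong multiplicity one fails for $\GSp_4$ at the level of representations because of non-singleton local packets, for instance at $\infty$ and at $p$. The correspondence must therefore be set up on weak packets. Use assertion~2 of \cite[Thm.~11.4]{RoesnerWeissauer2021} to match weak packets on the two sides, together with the fact that spherical data determine the split-side weak packet \cite[Prop.~10.1(6)]{RoesnerWeissauer2021}.
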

	
	\begin{proof}
		\textup{(a)} By Lemma~\ref{lem:rw-applicable},
		\cite[Thm.~11.4]{RoesnerWeissauer2021} applies to $\Pi$.
		Assertion~1 of that theorem gives multiplicity one for every
		automorphic representation weakly equivalent to $\Pi$, and
		assertion~2 identifies the set of these representations with the
		product of the local $L$-packets of the components of $\Pi$, the
		local packets corresponding to the local Langlands parameters. The
		archimedean packet is the singleton consisting of the trivial
		representation, the packet at every $q\ne p$ is the singleton
		containing the unramified member, and the packet at $p$ is
		$\{\Pi_p^+,\Pi_p^-\}$, of cardinality two
		\cite[Main Thm.~(ii), \S7.2]{GanTantono}. Hence the weak
		equivalence class of $\Pi$ is $\{\Pi^+,\Pi^-\}$, both automorphic
		with multiplicity one. Corollary~11.6 of
		\cite{RoesnerWeissauer2021}, which lists both $\Va_G$ and
		$\chi_0\otimes\Va_G$ among the local types of general-type
		representations of inner forms, records the same conclusion.
		
		\textup{(b)} Multiplicity one gives
		$V[\Pi]=(\Pi_p^+)^{U_{1,p}}\oplus(\Pi_p^-)^{U_{1,p}}$, with
		$B_2(2)$ and $R(\pi)$ acting through $\Pi_p^{\pm}$ on the summands;
		$R(\pi)$ acts through a local element $\omega_p$ normalizing
		$U_{1,p}$ with $\lambda(\omega_p)=p$, as in
		Remark~\ref{rem:localpacket}. Fix an isomorphism
		$\iota\colon(\xi\circ\lambda)\otimes\Pi_p^+\to\Pi_p^-$. Since $\xi$
		is unramified and $\lambda(U_{1,p})\subset\Z_p^\times$, the twist
		is trivial on $U_{1,p}$, so $\iota$ restricts to an isomorphism
		$(\Pi_p^+)^{U_{1,p}}\to(\Pi_p^-)^{U_{1,p}}$; in particular the two
		dimensions agree, and they are nonzero because $\Pi$ contributes to
		$M_{0,0}(\Upr(p))$. Under $\iota$ the action of $\omega_p$ on
		$(\Pi_p^-)^{U_{1,p}}$ corresponds to
		$\xi(\lambda(\omega_p))=\xi(p)=-1$ times its action on
		$(\Pi_p^+)^{U_{1,p}}$. Since $R(\pi)^2=\mathrm{id}$, the
		eigenvalues are $\pm1$ with multiplicities $(a,b)$ on the first
		summand and $(b,a)$ on the second, whence the balanced spectrum and
		the vanishing trace.
		
		\textup{(c)} The members of a weak packet share Satake parameters
		at every $q\nmid p$, so every $T(n)$ with $p\nmid n$, and in
		particular $B_2(2)$, acts on $V[\Pi]$ by the scalar determined by
		Proposition~\ref{prop:arthur}. The characteristic polynomial of
		$B_2(2)$ on $V[\Pi]$ is therefore
		$(x-\lambda_2(\Pi))^{2d_p(\Pi)}$, and the product over the weak
		packets in question is a square; $R(\pi)$ is balanced on each
		factor by \textup{(b)}.
		
		\textup{(d)} By \cite[Thm.~11.4(4)]{RoesnerWeissauer2021}, with the
		archimedean weight matching stated in
		\cite[Cor.~11.7]{RoesnerWeissauer2021}, a general-type cohomological
		$\Pi'$ is a lifting from $G$ if and only if at every finite place
		where $G$ is not split the local $L$-packet of $\Pi'$ is not fully
		Klingen induced. Here the only such place is $p$, and the packet
		$\{\Va,\Va^{*}\}$ contains the supercuspidal $\Va^{*}$
		\cite[\S3]{Schmidt2018}, so it is not the semisimplification of any
		parabolically induced representation; equivalently, type $\Va$ is
		not among the excluded types of
		\cite[Cor.~11.7]{RoesnerWeissauer2021}. The bijectivity of the
		correspondence of parameters follows from assertion~2 of
		\cite[Thm.~11.4]{RoesnerWeissauer2021} together with the fact that
		the spherical data determine the split-side weak packet
		\cite[Prop.~10.1(6)]{RoesnerWeissauer2021}.
	\end{proof}
	
	\begin{corollary}
		\label{cor:conj-iii}
		For every prime $p\ge11$, the contribution to $\charp B_2(2)$ of
		the general-type eigensystems whose local component at $p$ is of
		type $\Va$ is an exact square, and $R(\pi)$ splits it into $+1$ and
		$-1$ eigenspaces of equal dimension; its $R(\pi)$-trace vanishes.
		This establishes the square and sign assertions of
		Conjecture~\ref{conj:eigenvalue-sign}\textup{(iii)}. Identifying
		this square with the residual factor $\Va_p(x)^2$ of
		\eqref{eq:C1-factorization} amounts to the remaining rows of
		Table~\ref{tab:twocol}, that is, to the Saito--Kurokawa and Yoshida
		entries of the principal column together with the local
		fixed-vector statements of Remarks~\ref{rem:local-constant}
		and~\ref{rem:other-rows}.
	\end{corollary}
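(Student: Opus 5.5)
The plan is to derive the corollary directly from Theorem~\ref{thm:occurrence}, summing its per-packet conclusions over all type-$\Va$ weak packets. Using Proposition~\ref{prop:arthur}(a) and the decomposition $M_{0,0}(\Upr(p))=\bigoplus_\Pi V_\Pi$, I will define the type-$\Va$ subspace $W_{\Va}$ as the direct sum of the isotypic components $V_\Pi$ with $\Pi$ of general type and $\Pi_p$ in a Gan--Tantono packet of type $\Va$. I will group these by weak equivalence class. By Theorem~\ref{thm:occurrence}(a), each class is exactly $\{\Pi^+,\Pi^-\}$, so $W_{\Va}=\bigoplus V[\Pi]$, where the sum runs over weak packets. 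Since $B_2(2)$ and $R(\pi)$ commute with the Hecke action, both preserve each $V[\Pi]$. It follows that $\charp(B_2(2)\mid W_{\Va})=\prod\charp(B_2(2)\mid V[\Pi])$ and $\Tr(R(\pi)\mid W_{\Va})=\sum\Tr(R(\pi)\mid V[\Pi])$.

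Next I will apply Theorem~\ref{thm:occurrence}(c) to each factor. It gives $\charp(B_2(2)\mid V[\Pi])=(x-\lambda_2(\Pi))^{2d_p(\Pi)}=\bigl((x-\lambda_2(\Pi))^{d_p(\Pi)}\bigr)^2$. A product of squares is a square, so the whole contribution equals $Q(x)^2$ with $Q=\prod(x-\lambda_2(\Pi))^{d_p(\Pi)}$. To make the square statement meaningful over $\Z$, I will also check that $Q\in\Q[x]$. The set of type-$\Va$ weak packets is stable under $\Aut(\C)$ acting on Hecke eigenvalues, because the local type at $p$ is determined by the $\Upr(p)$-fixed data and the operator $R(\pi)$, both of which are defined over $\Q$. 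Hence $Q$ is Galois-invariant. If I need to avoid that argument, the square statement over $\C$ already suffices for the assertion as stated.

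For the sign statement, Theorem~\ref{thm:occurrence}(b) says that on each $V[\Pi]$ the eigenvalues $+1$ and $-1$ of $R(\pi)$ each occur with multiplicity $d_p(\Pi)$. Summing over packets, the $+1$ and $-1$ eigenspaces of $R(\pi)$ on $W_{\Va}$ both have dimension $\sum d_p(\Pi)=\deg Q$, and the trace is zero. This proves the square and sign clauses of Conjecture~\ref{conj:eigenvalue-sign}(iii).

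The main obstacle is the final identification sentence. This is a statement of what remains open, not something to prove. I will argue it as a reduction. By the bookkeeping of Proposition~\ref{prop:VI}, $\Va_p(x)^2$ is defined as the residual factor after removing the Eisenstein, Saito--Kurokawa, Yoshida and $N_p$ factors. The equality $Q=\Va_p$ therefore holds exactly when the non-$\Va$ rows of Table~\ref{tab:twocol} have the stated principal-column multiplicities and when $d_p(\Pi)=1$, which is the local constant of Remark~\ref{rem:local-constant}. Under those inputs, subtracting the other blocks from $\charp B_2(2)$ leaves precisely $Q^2$. This shows the identification is equivalent to those remaining row entries, which is exactly what the corollary claims.
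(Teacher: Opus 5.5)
Your argument is essentially the paper's own. The paper gives no separate proof: the corollary is read off from Theorem~\ref{thm:occurrence}(b)--(c) by summing the per-packet square $(x-\lambda_2(\Pi))^{2d_p(\Pi)}$ and the balanced $R(\pi)$-spectrum over the type-$\Va$ weak packets, and the identification clause records what remains open rather than something proved. Your rationality aside for $Q$ is not needed for the statement as given, and its Galois-stability justification is only heuristic, so you can drop it.
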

	
	\begin{remark}\label{rem:local-constant}
		Theorem~\ref{thm:occurrence} reduces the type-Va row of
		Table~\ref{tab:twocol} to the single local quantity
		$d_p(\Pi)=\dim(\Pi_p^+)^{U_{1,p}}$, the dimension of the
		$U_{1,p}$-fixed vectors of a packet member; the entry $2$ of the
		row is equivalent to $d_p(\Pi)=1$ for every occurring parameter. At
		$p=19$ the certified factorization
		(Theorem~\ref{thm:certified-spectra}) exhibits a type-Va block of
		dimension two; since each occurring parameter contributes an even
		dimension at least two, there is exactly one type-Va parameter at
		$p=19$ and $d_{19}=1$. For general $p$ the equality $d_p=1$ is a
		finite local computation in the parahoric restriction of the
		Iwahori-spherical representations of $\GU_2(B_p)$; the split-side
		analogue is carried out in \cite{Roesner2016}, and we are not aware
		of a published counterpart for the inner form.
	\end{remark}
	
	\begin{remark}\label{rem:other-rows}
		The argument of Theorem~\ref{thm:occurrence}\textup{(a)} applies to
		every general-type row of Table~\ref{tab:twocol}: by
		\cite[Thm.~11.4]{RoesnerWeissauer2021}, each occurring general-type
		parameter contributes, with multiplicity one, every member of its
		weak packet, so each principal-column entry in a general-type row
		equals the sum of the $U_{1,p}$-fixed dimensions over the local
		packet at $p$. The entries $1$ for $c_0=1$ (local type
		$\mathrm{IIa}_G$; the packet is a singleton, cf.\
		\cite[\S3]{Schmidt2018} and \cite[Table~3]{RoesnerWeissauer2021})
		and $0$ for $c_0\le0$ are therefore equivalent to local statements
		of the same kind as Remark~\ref{rem:local-constant}.
		Corollary~11.6 of \cite{RoesnerWeissauer2021} bounds the possible
		local types at $p$ of occurring general-type representations.
	\end{remark}
	
	R\"osner and Weissauer prove the split-side input to
	\cite[Thm.~11.4]{RoesnerWeissauer2021}, the packet structure of the
	general-type cohomological spectrum of $\GSp_4$, independently of the
	announced classification results
	(\cite[\S10]{RoesnerWeissauer2021}); the proof of the lifting theorem
	proceeds through the cohomological trace formula and the local
	character identities of Chan and Gan \cite{ChanGan2015}. The route
	through Arthur's formalism, and with it the transfer-factor comparison
	named as its prerequisite by Gee and Ta\"ibi
	\cite[Rem.~7.4.8]{GeeTaibi}, is therefore not needed for the present
	application; see \S\ref{sec:concl} for what remains open.

	\section{The structural trace identity}
	\label{sec:trace}
	
	This section proves Theorem~\ref{thm:structural-trace}. The trace of the
	Atkin--Lehner involution on each genus has a closed evaluation due to
	Ibukiyama. Comparing the principal and non-principal formulas term by term
	produces a simple genus-difference identity. The non-principal-genus
	correspondence then turns this identity into the structural formula for the
	principal genus.
	
	\subsection{Conventions for the trace formulas}
	\label{ss:trace-conventions}
	
	Throughout, $h(\sqrt{-m})$ denotes the class number of the imaginary
	quadratic field $\Q(\sqrt{-m})$. Thus its discriminant is the fundamental
	discriminant attached to $-m$; this convention differs from an
	order-discriminant convention in which $h(-D)$ is set to zero unless
	$-D\equiv0,1\pmod4$.
	
	Let
	\[
	s:=\kro{2}{p},\qquad e_3:=\kro{p}{3},
	\]
	and define
	\begin{equation}\label{eq:nu-principal}
		\nu(p):=
		\begin{cases}
			h(\sqrt{-p}),&p\equiv1\pmod4,\\
			(3-s)h(\sqrt{-p}),&p\equiv3\pmod4.
		\end{cases}
	\end{equation}
	Equivalently, in discriminant notation,
	\[
	\nu(p)=
	\begin{cases}
		h(-4p),&p\equiv1\pmod4,\\
		h(-4p)+h(-p),&p\equiv3\pmod4.
	\end{cases}
	\]
	For example, $\nu(11)=h(-44)+h(-11)=4$ and
	$\nu(23)=h(-92)+h(-23)=6$.
	
	The notation $B_{2,\chi}$ denotes the second generalized Bernoulli number
	for the real quadratic character attached to $\Q(\sqrt p)$, with the
	normalization of \cite{IbukiyamaKatsura,Ibukiyama2019quinary}.
	
	\subsection{Comparison of the two genus traces}
	\label{ss:trace-comparison}
	
	\begin{proposition}
		\label{prop:genus-trace-comparison}
		For every prime $p\ge7$,
		\begin{equation}\label{eq:genus-trace-difference}
			\Tr\!\left(R(\pi)\mid M_{0,0}(\Upr(p))\right)
			-
			\Tr\!\left(R_{\mathrm{npg}}(\pi)\mid M_{0,0}(U_{\mathrm{npg}}(p))\right)
			=
			\nu(p)\frac{p-e_3}{12}.
		\end{equation}
	\end{proposition}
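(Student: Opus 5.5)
The plan is to prove \eqref{eq:genus-trace-difference} by a direct comparison of the two closed evaluations of the Atkin--Lehner trace. The principal-genus evaluation is \cite{IbukiyamaKatsura}, and the non-principal-genus evaluation is \cite{Ibukiyama2019quinary}. No automorphic input will be used, and neither Theorem~\ref{thm:occurrence} nor Table~\ref{tab:twocol} is needed.

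Both traces are sums over conjugacy classes of elements $g\in G(\Q)$ that are, up to center, of the form $\pi\cdot(\text{unit})$. Such a $g$ satisfies $\lambda(g)=p$, and each class is weighted by the number of fixed points of the corresponding $R$-action on the genus. Ibukiyama groups these classes by the commutative subalgebra $\Q(g)$. The groups are:
\begin{itemize}
\item the central-type classes, which give the term in $B_{2,\chi}$ for $\Q(\sqrt p)$;
\item the classes with $g^2$ generating $\Q(\sqrt{-p})$;
\item the classes giving $\Q(\sqrt{-2p})$;
\item the classes giving $\Q(\sqrt{-3p})$.
\end{itemize}
Each group contributes a local factor at $p$ multiplied by a local factor at $2$ or $3$ where relevant.

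\textbf{Step 1.} I will write both formulas in the uniform notation of \S\ref{ss:trace-conventions}: field class numbers $h(\sqrt{-m})$, $s=\kro{2}{p}$, and $e_3=\kro p3$. I will separate the two cases $p\equiv1$ and $p\equiv3\pmod4$. For $p\equiv3\pmod4$ this means converting order class numbers $h(-p)$ and $h(-4p)$ into $h(\sqrt{-p})$ via $h(-4p)=(3-s)h(\sqrt{-p})-h(\sqrt{-p})$ for $p\ge7$. This conversion is what produces $\nu(p)$ in \eqref{eq:nu-principal}.

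\textbf{Step 2.} I will check term by term that the coefficients of $B_{2,\chi}$, $h(\sqrt{-2p})$ and $h(\sqrt{-3p})$ coincide in the two formulas. Conceptually these classes have global optimal embeddings whose local embedding numbers at $p$ agree for the two maximal parahorics $U_1,U_2$. This is because the relevant local orders are ramified at $p$ in the same way, and away from $p$ the lattices coincide.

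\textbf{Step 3.} I will subtract and simplify the residual $h(\sqrt{-p})$ terms. In the principal genus the classes attached to $\Z[\sqrt{-p}]$ and $\Z[(1+\sqrt{-p})/2]$ occur with a mass factor involving the supersingular count $(p-e_3)/12$ through products $E\times E'$. I expect this to combine into $\nu(p)(p-e_3)/12$. The coefficient is a mass-type quantity: $(p-1)/12$ corrected at $3$ by $e_3$, via the count of curves with $j=0$, cf.\ Lemma~\ref{lem:vertices}(c).

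The main obstacle is Step 3 together with the normalization issues in Step 1. The two papers use different conventions for class numbers and for the $2$-adic and $3$-adic local factors. A mismatch there would spoil the cancellation claimed in Step 2. I would first align the conventions explicitly, in particular the $p\equiv3\pmod 4$ case with its factor $3-s$. As a sanity check I would then verify the resulting identity numerically at small primes ($p=7,11,13,23$), using the values $\nu(11)=4$ and $\nu(23)=6$ and the certified traces from Theorem~\ref{thm:certified-spectra}. The restriction $p\ge7$ excludes $p=2,3,5$, where extra torsion classes ($\Q(\zeta_8)$, $\Q(\zeta_{12})$, $\Q(\zeta_5)$-type elements) contribute additional terms.
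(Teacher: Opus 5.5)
Your plan is the paper's proof: take the closed evaluations of \cite[Rem.~3]{IbukiyamaKatsura} and \cite[Thm.~5.2]{Ibukiyama2019quinary} in each residue class mod $4$, check that the $B_{2,\chi}$, $h(\sqrt{-2p})$ and $h(\sqrt{-3p})$ coefficients coincide (for $p\equiv1\pmod4$ this uses $\kro{-2}{p}=s$), and simplify the surviving $h(\sqrt{-p})$ coefficient to $(p-e_3)/12$, respectively $(3-s)(p-e_3)/12$, which equals $\nu(p)(p-e_3)/12$ by \eqref{eq:nu-principal}. Only the explicit coefficient check carries the proof: the local-embedding explanation in your Step~2 is exactly the termwise local statement the paper declines to claim (Remark~\ref{rem:global-orbital}), and a numerical sanity check must take the non-principal trace from \eqref{eq:npg-trace-d} rather than from Theorem~\ref{thm:certified-spectra}, which covers only the principal genus and only $p\ge11$.
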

	
	\begin{proof}
		For $p\equiv3\pmod4$, \cite[Rem.~3]{IbukiyamaKatsura} gives
		\begin{align}
			\Tr R(\pi)
			={}&\frac{1}{2^5\cdot3}B_{2,\chi}
			+\frac18h(\sqrt{-2p})
			+\frac1{12}h(\sqrt{-3p}) \notag\\
			&+\left\{
			\frac{(p-1)(9-4s)}{48}
			+\frac{p-s}{16}
			+\frac{(1-e_3)(3-s)}{12}
			\right\}h(\sqrt{-p}).
			\label{eq:principal-trace-3mod4}
		\end{align}
		For $p\equiv1\pmod4$, the same source gives
		\begin{align}
			\Tr R(\pi)
			={}&\frac{9-2s}{2^5\cdot3}B_{2,\chi}
			+\frac{4p-1}{48}h(\sqrt{-p})
			+\frac18h(\sqrt{-2p}) \notag\\
			&+\frac{3+\kro{-2}{p}}{12}h(\sqrt{-3p})
			+\frac{1-e_3}{12}h(\sqrt{-p}).
			\label{eq:principal-trace-1mod4}
		\end{align}
		
		For the non-principal genus, \cite[Thm.~5.2]{Ibukiyama2019quinary} gives,
		when $p\equiv1\pmod4$,
		\begin{equation}
			\Tr R_{\mathrm{npg}}(\pi)
			=\frac{9-2s}{2^5\cdot3}B_{2,\chi}
			+\frac1{16}h(\sqrt{-p})
			+\frac18h(\sqrt{-2p})
			+\frac{3+s}{12}h(\sqrt{-3p}),
			\label{eq:TOH1}
		\end{equation}
		and, when $p\equiv3\pmod4$,
		\begin{equation}
			\Tr R_{\mathrm{npg}}(\pi)
			=\frac{1}{2^5\cdot3}B_{2,\chi}
			+\frac{1-s}{16}h(\sqrt{-p})
			+\frac18h(\sqrt{-2p})
			+\frac1{12}h(\sqrt{-3p}).
			\label{eq:TOH3}
		\end{equation}
		
		The coefficient comparison is summarized in
		Table~\ref{tab:trace-coefficients}. The $B_{2,\chi}$ and
		$h(\sqrt{-2p})$ coefficients agree in both residue classes. For
		$p\equiv1\pmod4$,
		\[
		\kro{-2}{p}=\kro{-1}{p}\kro{2}{p}=s,
		\]
		so the $h(\sqrt{-3p})$ coefficients agree; for $p\equiv3\pmod4$ they
		are both $1/12$. Thus only the $h(\sqrt{-p})$ coefficient remains.
		
		If $p\equiv1\pmod4$, its difference is
		\[
		\frac{4p-1}{48}+\frac{1-e_3}{12}-\frac1{16}
		=\frac{p-e_3}{12}.
		\]
		If $p\equiv3\pmod4$, it is
		\begin{align*}
			&\frac{(p-1)(9-4s)}{48}
			+\frac{p-s}{16}
			+\frac{(1-e_3)(3-s)}{12}
			-\frac{1-s}{16}\\
			&\hspace{4em}=
			\frac{(3-s)(p-e_3)}{12}.
		\end{align*}
		The definition \eqref{eq:nu-principal} gives
		\eqref{eq:genus-trace-difference} in both cases.
	\end{proof}
	
	\begin{table}[ht]
		\centering
		\begin{tabular}{c|c|c}
			Term & $p\equiv1\pmod4$ & $p\equiv3\pmod4$ \\
			\hline
			$B_{2,\chi}$ & equal & equal \\
			$h(\sqrt{-2p})$ & equal & equal \\
			$h(\sqrt{-3p})$ & equal, since $\left(\frac{-2}{p}\right)=\left(\frac2p\right)$ & equal \\
			$h(\sqrt{-p})$ & $(p-e_3)/12$ & $(3-s)(p-e_3)/12$ \\
		\end{tabular}
		\caption{Difference ``principal trace minus non-principal trace'' in the
			closed trace formulas. Only the $h(\sqrt{-p})$ coefficient survives.}
		\label{tab:trace-coefficients}
	\end{table}
	
	\begin{remark}
		\label{rem:global-orbital}
		Both trace formulas arise from the Eichler--Selberg trace formula for
		$\GU_2(B)$, with test functions that differ only at $p$, where the two
		genera correspond to non-conjugate maximal parahorics. The coefficient
		comparison above may be interpreted as equality of the aggregated
		contributions represented by the order-$3$ and order-$4$ torsion terms.
		We do not claim here a direct termwise local equality of twisted orbital
		integrals. Such a statement requires fixed Haar measures, explicit local
		representatives, and a local optimal-embedding calculation.
	\end{remark}
	
	\subsection{Elliptic Atkin--Lehner input}
	\label{ss:elliptic-trace-input}
	
	Write
	\[
	d_k=\dim\snew_k,\qquad
	a_k=\dim S_k^+,
	\qquad b_k=\dim S_k^-
	\qquad(k=2,4).
	\]
	The following identities are used to convert the genus comparison into the
	principal-genus trace formula.
	
	\begin{lemma}	\label{lem:elliptic-input}
		For every prime $p\ge7$,
		\begin{enumerate}[label=\textup{(\alph*)},leftmargin=2.3em]
			\item $a_2-b_2=1-\nu(p)/2$;
			\item $a_4-b_4=\nu(p)/2$;
			\item $d_2+d_4+1=(p-e_3)/3$.
		\end{enumerate}
	\end{lemma}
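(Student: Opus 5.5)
We plan to take (a) and (b) from the classical trace of the Atkin--Lehner involution $W_p$ on $S_k(\Gamma_0(p))$, and (c) from the standard dimension formula for $\Gamma_0(p)$. Since $S_2(\mathrm{SL}_2(\Z))=S_4(\mathrm{SL}_2(\Z))=0$, every form of level $p$ and weight $2$ or $4$ is new. Hence $\snew_k=S_k(\Gamma_0(p))$, and $a_k-b_k=\Tr(W_p\mid S_k(\Gamma_0(p)))$ with the sign convention of \eqref{eq:ap-sign}.

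For (a), we would argue geometrically. Write $g=\dim S_2(\Gamma_0(p))$ for the genus of $X_0(p)$ and $g^+=a_2$ for the genus of $X_0^+(p)=X_0(p)/w_p$. The involution $w_p$ on $X_0(p)$ has, for $p>3$, exactly $h(-4p)$ fixed points when $p\equiv1\pmod4$ and $h(-4p)+h(-p)$ when $p\equiv3\pmod4$. These fixed points are the CM points by orders containing $\sqrt{-p}$ (together with $(1+\sqrt{-p})/2$ when that is integral), so their number is $\nu(p)$ by \eqref{eq:nu-principal}. Riemann--Hurwitz then gives
\[
2g-2=2(2g^+-2)+\nu(p),
\]
so $a_2-b_2=2g^+-g=1-\nu(p)/2$.

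For (b), we plan to use the Eichler--Selberg trace formula for $T(1)\circ W_p$ on $S_k(\Gamma_0(p))$ in the form of Yamauchi or Skoruppa--Zagier. For prime level $p>3$ it reads
\[
\Tr W_p=(-1)^{k/2}\tfrac12\nu(p)+\delta_{k,2}.
\]
The elliptic contribution comes from the elements of norm $p$ and trace $0$, that is, from the same CM points as in (a). The hypothesis $p\ge7$ ensures that the extra terms from traces $\pm1,\pm2,\pm3$ with $4p-t^2$ a square vanish; they occur only for small $p$. The term $\delta_{k,2}$ is the Eisenstein correction. At $k=2$ this formula recovers (a), which gives a consistency check, and at $k=4$ it yields $a_4-b_4=\nu(p)/2$. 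The main obstacle is pinning down the sign and normalization conventions of the cited trace formula, in particular which sign of $W_p$ matches $\eps_p$ and how the class numbers are weighted. We would calibrate these against (a) and a small case such as $p=11$, where $\snew_4$ is known explicitly.

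For (c), we use
\[
\dim S_k(\Gamma_0(p))=(k-1)(g-1)+(\tfrac k2-1)c+\lfloor k/4\rfloor\nu_2+\lfloor k/3\rfloor\nu_3,
\]
with $c=2$ cusps, $\nu_2=1+\kro{-1}{p}$, $\nu_3=1+\kro{-3}{p}$, and $g=\tfrac{p+1}{12}-\tfrac{\nu_2}{4}-\tfrac{\nu_3}{3}$. This gives $d_2+d_4+1=4g+\nu_2+\nu_3=\tfrac{p+1}{3}-\tfrac{\nu_3}{3}$. Since $\kro{-3}{p}=\kro{p}{3}=e_3$, the right-hand side equals $(p-e_3)/3$.
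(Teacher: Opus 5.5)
Your proposal is correct and follows the paper's route: Riemann--Hurwitz for the Fricke involution on $X_0(p)$ gives (a), Yamauchi's trace formula for $W_p$ gives (b), and the standard weight-$2$ and weight-$4$ dimension formulas with quadratic reciprocity give (c). Two small corrections. First, the determinant-$p$ matrices in the coset $W_p\Gamma_0(p)$ have trace in $p\Z$, so an elliptic term with $t\neq0$ occurs only for $p\le3$; the extra terms are not ``traces $\pm1,\pm2,\pm3$ with $4p-t^2$ a square.'' Second, your displayed dimension formula is the $k\ge4$ version, which gives $g-1$ at $k=2$, although your computation correctly used $d_2=g$.
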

	
	\begin{proof}
		Part \textup{(a)} follows from the fixed points of the Fricke involution
		on $X_0(p)$ and Riemann--Hurwitz \cite[\S2]{Ogg1974}. Part \textup{(b)} is Yamauchi's trace
		formula in weight $4$ \cite{Yamauchi1973}. Part \textup{(c)} follows by combining the standard
		dimension formulas for weights $2$ and $4$ with quadratic reciprocity; see
		\cite{AtkinLehner,Ibukiyama-dim}.
	\end{proof}
	
	\subsection{Proof of the structural trace identity}
	\label{ss:proof-structural-trace}
	
	The non-principal-genus correspondence \cite{DPRT,Ibukiyama-dim} gives
	\begin{equation}\label{eq:npg-dimension-trace}
		H_{\mathrm{npg}}(p)=1+\dim S_3(K(p)),
		\qquad
		\Tr R_{\mathrm{npg}}(\pi)
		=H_{\mathrm{npg}}(p)-2\dim S_3^+(K(p)).
	\end{equation}
	Here $S_3^\pm(K(p))$ are the eigenspaces of the paramodular Fricke
	involution; the correspondence matches this involution with
	$R_{\mathrm{npg}}(\pi)$ itself, and the signed identity
	$\dim S_3^{+}(K(p))=\tfrac12\bigl(H_{\mathrm{npg}}(p)-\Tr R_{\mathrm{npg}}(\pi)\bigr)$
	is the one used in \cite[proof of Prop.~8.2]{Ibukiyama-dim}, so the
	second identity carries no free sign normalization.
	Since
	\[
	\delta(p)=\dim S_3(K(p))-\dim S_4^-,
	\qquad
	d(p)=\delta(p)-2\dim S_3^+(K(p)),
	\]
	we obtain
	\begin{equation}\label{eq:npg-trace-d}
		\Tr R_{\mathrm{npg}}(\pi)=1+\dim S_4^-+d(p).
	\end{equation}
	
	\begin{proof}[Proof of Theorem~\ref{thm:structural-trace}]
		By Proposition~\ref{prop:genus-trace-comparison} and
		\eqref{eq:npg-trace-d}, it is enough to prove
		\begin{equation}\label{eq:lift-comparison-target}
			L_0(p)-1-\dim S_4^-=
			\nu(p)\frac{p-e_3}{12}.
		\end{equation}
		Using $d_k=a_k+b_k$, the left side is
		\[
		L_0(p)-1-b_4=b_2a_4-a_2b_4.
		\]
		By Lemma~\ref{lem:elliptic-input},
		\[
		b_2a_4-a_2b_4
		=\frac14(d_2-a_2+b_2)(d_4+a_4-b_4)
		-\frac14(d_2+a_2-b_2)(d_4-a_4+b_4).
		\]
		Substituting parts \textup{(a)}--\textup{(c)} of that lemma and simplifying
		gives exactly the right-hand side of \eqref{eq:lift-comparison-target}.
		Therefore
		\[
		\Tr R(\pi)
		=\Tr R_{\mathrm{npg}}(\pi)+\nu(p)\frac{p-e_3}{12}
		=L_0(p)+d(p),
		\]
		as required.
	\end{proof}
	
	\begin{remark}	\label{rem:trace-consistency}
		If Conjecture~\ref{conj:eigenvalue-sign} holds, the Eisenstein and
		Saito--Kurokawa blocks contribute $1+\dim\snew_4$ to the trace. The
		opposite-sign Yoshida blocks contribute
		\[
		\dim S_2^-\dim S_4^+-\dim S_2^+\dim S_4^-.
		\]
		Every type-$\Va$ pair contributes zero, and the general-type contribution
		is $d(p)$. Thus Theorem~\ref{thm:structural-trace} proves the signed total
		predicted by C1 for every prime. It does not, by itself, determine the
		individual multiplicities or eigenvalue assignments of C1.
	\end{remark}
	
	\subsection{Consequences for type numbers}
	\label{ss:type-number-trace}
	
	The involution $R(\pi)$ permutes the principal-genus classes. Hence
	\[
	\Tr R(\pi)=2T_1(p)-h_2(p),
	\]
	where $T_1(p)$ is the principal-genus type number
	\cite{IbukiyamaKatsura,Ibukiyama2018type}.
	
	\begin{corollary}\label{cor:typenum}
		For every prime $p\ge7$,
		\begin{equation}\label{eq:type-number-paramodular}
			T_1(p)=\frac12\left(h_2(p)+L_0(p)+d(p)\right).
		\end{equation}
		Equivalently,
		\[
		T_1(p)=\frac12\left(
		h_2(p)+1+\dim\snew_4
		+\dim S_2^-\dim S_4^+
		-\dim S_2^+\dim S_4^-
		+d(p)
		\right).
		\]
	\end{corollary}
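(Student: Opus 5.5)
The plan is to combine two facts already in hand: the identity $\Tr R(\pi)=2T_1(p)-h_2(p)$ stated just before the corollary, and the structural trace identity, Theorem~\ref{thm:structural-trace}, valid for every prime $p\ge7$.

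First I will justify the permutation identity. By Lemma~\ref{lem:eis}, $R(\pi)$ acts on $M_{0,0}(\Upr(p))$ through a permutation $\sigma$ of the principal-genus classes $L_1,\dots,L_{h_2}$. Since $R(\pi)^2=\mathrm{id}$, $\sigma$ is an involution. Its trace therefore equals the number $F$ of fixed classes.

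The type number $T_1(p)$ is the number of $\sigma$-orbits, as fixed in Table~\ref{tab:notation} and in \cite{IbukiyamaKatsura,Ibukiyama2018type}. If $T$ is the number of transpositions, then
\[
h_2=F+2T\quad\text{and}\quad T_1=F+T,
\]
so
\[
F=2T_1-h_2 .
\]

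The one point needing care is that the mass-weighted inner product \eqref{eq:mass-inner} does not affect the trace. The trace of a permutation matrix is basis-intrinsic, and in the basis of characteristic functions of the classes $R(\pi)$ is exactly the permutation matrix of $\sigma$.

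Next I substitute Theorem~\ref{thm:structural-trace}, which gives
\[
2T_1(p)-h_2(p)=L_0(p)+d(p).
\]
Solving for $T_1(p)$ yields \eqref{eq:type-number-paramodular}. Expanding $L_0(p)$ by its definition \eqref{eq:L0-def} gives the second displayed form.

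As a sanity check, the right-hand side must be an integer. This is automatic, because $L_0+d=\Tr R(\pi)\equiv h_2\pmod 2$.

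The only potential obstacle is the identification of $T_1(p)$ with the number of $\sigma$-orbits, which I take from the cited sources. The rest is arithmetic.
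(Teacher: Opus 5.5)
Your proposal is correct and is the paper's proof: combine the orbit-counting identity $\Tr R(\pi)=2T_1(p)-h_2(p)$ with Theorem~\ref{thm:structural-trace} and solve for $T_1(p)$. Your justification of the fixed-point count, of basis-independence of the trace, and of integrality just spells out what the paper takes from \cite{IbukiyamaKatsura} and Lemma~\ref{lem:eis}.
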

	
	\begin{proof}
		Combine $\Tr R(\pi)=2T_1(p)-h_2(p)$ with
		Theorem~\ref{thm:structural-trace}.
	\end{proof}
	
	The value of $T_1(p)$ itself is classical. The contribution of
	Corollary~\ref{cor:typenum} is its expression in paramodular dimension data.
	
	\begin{example}\label{ex:trace-p11}
		At $p=11$, one has $h_2=5$, $\dim\snew_4=2$,
		$\dim S_2^-=1$, $\dim S_4^+=2$, and $d(11)=0$. Thus
		\[
		\Tr R(\pi)=1+2+1\cdot2=5.
		\]
		This agrees with the fact that all five principal-genus classes are fixed
		by the involution, and gives $T_1(11)=5$.
	\end{example}
	
	\section{Certified computation and the proof of Theorem~\ref{thm:certified-spectra}}
	\label{sec:certificates}
	
	This section proves Theorem~\ref{thm:certified-spectra}. For each prime $11\le p\le149$, the archived artifact supplies a finite list of integer data, and a small number of exact identities among these data imply the full conclusion of Conjecture~\ref{conj:eigenvalue-sign} at $p$. The format follows the
	tradition of succinct certificates and certificate-checked computer proof
	\cite{Pratt1975,AtkinMorain1993,RSST1997}. The section is organized in
	three layers: the construction of the geometric data, the deductive step
	that converts certificates into the spectral conclusion, and the
	reproducibility protocol that makes the computation independently
	checkable.
	
	\subsection{Overview of the logical scheme}
	\label{ss:cert-overview}
	
	The proof separates into a deductive proposition (Proposition~\ref{prop:reduction}) and a finite computation, replayed from the archived data in exact arithmetic.
	
	\begin{enumerate}[label=\textup{(\arabic*)},leftmargin=2.4em]
		\item \textbf{Geometric data.} For each prime, the Richelot engine produces
		an integer out-edge matrix $M_p$, a permutation $\sigma_p$ inducing
		$R(\pi)$, and vertex weights $e_i$, from the geometry alone
		(\S\ref{ss:engine}, \S\ref{ss:extension}).
		\item \textbf{Split-side data.} Two separate pipelines supply the weight-$2$
		and weight-$4$ newform orbit polynomials with their Atkin--Lehner signs
		(\S\ref{ss:split-pipelines}).
		\item \textbf{Certificates.} Finitely many exact identities among these
		data are verified in exact arithmetic (\S\ref{ss:finitecert}).
		\item \textbf{Conclusion.} A deductive proposition
		(Proposition~\ref{prop:reduction}) converts the verified identities into
		the simultaneous eigenspace and sign decomposition asserted by
		Conjecture~\ref{conj:eigenvalue-sign} at $p$.
	\end{enumerate}
	
	External comparisons with paramodular databases are reported in
	\S\ref{ss:external} as corroboration; they are not inputs to the proof.
	
	\subsection{The geometric engine}
	\label{ss:engine}
	
	The engine constructs the out-edge matrix $M_p$ from Igusa invariants
	alone. Fix a superspecial Jacobian vertex $A\simeq\Jac C$ with
	$C:y^2=f(x)$, $\deg f=6$, over a field of definition $\F_{p^2}$. Each of
	the fifteen Richelot kernels at $A$ corresponds to a partition of the six
	roots of $f$ into three unordered pairs, equivalently to a factorization
	$f=c\,q_1q_2q_3$ into quadratics over the splitting field
	\cite[\S8]{JordanZaytman}. For each partition the codomain is determined by
	the Richelot determinant
	\[
	\Delta_R=\det\begin{pmatrix}a_0&a_1&a_2\\ b_0&b_1&b_2\\ c_0&c_1&c_2
	\end{pmatrix},
	\qquad q_i=a_ix^2+b_ix+c_i:
	\]
	if $\Delta_R\ne0$ the target is the Jacobian of
	$\Delta_R y^2=[q_2,q_3][q_3,q_1][q_1,q_2]$ with
	$[q_i,q_j]=q_i'q_j-q_iq_j'$, and if $\Delta_R=0$ the target is a product
	of elliptic curves. Two targets coincide as principally polarized abelian
	surfaces precisely when their Igusa--Clebsch invariants agree in weighted
	projective space \cite{Igusa1960}; this is the test used to locate the
	target vertex.
	
	The engine evaluates the bracket polynomials $[q_i,q_j]$ and the
	invariants through subresultants of $f$ and its partition factors, without
	forming the derivative $f'$ and without passing to a splitting field \cite{Dang2025}. The
	subresultant chain computes the required resultants and brackets over the
	ground ring directly, so the codomain invariants are obtained without the
	degree drops that a derivative-based implementation incurs at ramified
	points. All arithmetic is exact. Product vertices $E_a\times E_b$ are
	handled by the companion incidence of \cite{KatsuraTakashima}, their
	fifteen kernels distributed among Jacobian and product targets according to
	the isotropic-subspace combinatorics. The result is the integer matrix
	$M=(M_{ij})$ with $M_{ij}$ the number of kernels at $i$ with target $j$,
	and $\sum_jM_{ij}=15$ for every $i$.
	
	\subsection{Self-weighting}
	\label{ss:selfweight}
	
	The operator $B_2(2)$ is the weighted adjacency operator
	$S=D^{-1/2}MD^{1/2}$, $D=\diag(e_i)$, not the raw matrix $M$. The weights
	$e_i=\#\Aut(A_i,\lambda_i)$ are recovered from $M$ itself rather than
	supplied to it. By Lemma~\ref{lem:vertices}(b), the product-vertex weights
	are $8w_a^2$ and $4w_aw_b$ in terms of the elliptic automorphism numbers
	$w_a\in\{1,2,3\}$, and the diagonal product weights $e_{\{a,a\}}=8w_a^2$
	determine the $w_a$; the Eichler mass $\sum_a1/w_a=(p-1)/12$ of
	\eqref{eq:eichler-mass} fixes their global scale. The Jacobian-vertex
	weights are then pinned by the Mestre symmetry \eqref{eq:mestre}: along
	any two-way edge $M_{ij},M_{ji}>0$ the relation $e_jM_{ij}=e_iM_{ji}$
	propagates a known weight to its neighbour, and the Jacobian balance graph
	is connected at every prime in the certified range, so a single product
	anchor determines all remaining weights. The propagation is
	over-determined: every two-way ordered edge imposes one independent
	instance of \eqref{eq:mestre}, and all hold simultaneously as identities
	in $\Q$. A single transcription error in any $M_{ij}$ would violate the
	symmetry at some edge and be detected.
	
	\subsection{Three structural certificates}
	\label{ss:pillars}
	
	Each output matrix is required to pass three internal checks, all in exact
	arithmetic and independent of the automorphic side.
	
	\begin{enumerate}[label=\textup{(P\arabic*)},leftmargin=2.6em]
		\item \emph{Combinatorial validity.} Every row of $M$ sums to $15$; the
		Mestre symmetry \eqref{eq:mestre} holds for the self-weighting weights;
		and the recovered weights satisfy the Eichler mass \eqref{eq:eichler-mass}.
		This certifies that $M$ is the out-edge matrix of a $15$-regular Richelot
		graph with the correct total mass.
		\item \emph{Spectral integrality.} The characteristic polynomial of $M$,
		recomputed from the integer entries, factors over $\Z$ with the Perron
		factor $(x-15)$ simple (Lemma~\ref{lem:vertices}(d)). This is checked
		independently of the system that produced $M$, so a transcription error in
		any entry is caught by a mismatch in $\charp(M)$.
		\item \emph{Local cross-check.} The Jacobian/product incidence and the
		edge multiplicities agree with the independent enumerations of
		\cite{KatsuraTakashima,FloritSmith}, which count superspecial Richelot
		isogenies by a different method.
	\end{enumerate}
	
	\subsection{Extension to every prime $p\le149$}
	\label{ss:extension}
	
	The engine of \S\ref{ss:engine} assumes the six Weierstrass points of each
	Jacobian vertex rational over $\F_{p^2}$, which holds through $p=61$ and
	first fails at $p=73$. The obstruction is one of field of definition, not
	of the graph. The successor engine removes it by working, at each vertex,
	over the extension $\F_{p^{2m}}$ with $m$ the least common multiple of the
	Frobenius cycle lengths on the six Weierstrass points, while keeping every
	isomorphism-class key in $\F_{p^2}$ through the absolute Igusa invariants.
	Concretely, at each vertex the engine
	\begin{enumerate}[label=\textup{(\roman*)},leftmargin=2.4em]
		\item computes the Frobenius cycle type on the six Weierstrass points and
		sets $m$ to its least common multiple;
		\item enumerates the fifteen quadratic splittings over $\F_{p^{2m}}$ and
		forms the Richelot codomains;
		\item keys each codomain by its absolute Igusa invariants, which lie in
		$\F_{p^2}$; and
		\item accumulates the out-edge row.
	\end{enumerate}
	Each step is finite-field arithmetic; none consults the automorphic side.
	All certificates of \S\ref{ss:pillars} are unchanged, and two arithmetic
	anchors are imposed at every prime: the fixed-point count of $R(\pi)$ must
	equal the closed formula of \cite[Thm.~2, Rem.~3]{IbukiyamaKatsura}, and
	the trace identity $\Tr R(\pi)=L_0(p)+d(p)$ of
	Theorem~\ref{thm:structural-trace} must hold. The successor engine
	reproduces the matrices of \S\ref{ss:engine} entry by entry at the
	smallest primes before entering new territory.
	
	\subsection{The split-side pipelines}
	\label{ss:split-pipelines}
	
	The factorization certificate is two-sided. On the elliptic side, the
	weight-$2$ eigenvalue systems and their Fricke signs are computed from the
	supersingular $2$-isogeny Brandt matrix together with the involution
	$j\mapsto j^{(p)}$, and the weight-$4$ systems and signs from Manin
	symbols \cite{Manin1972,Cremona1997} for $\Gamma_0(p)$ with Merel's
	Heilbronn matrices \cite{Merel1994}, the sign read off the $U_p$-eigenvalue
	$-\eps_p\,p$. Both pipelines are validated against the closed dimension
	formulas at every prime. The weight-$2$ pipeline reuses the finite-field
	and supersingular-curve utilities of the graph package; the weight-$4$
	pipeline is a separate Manin-symbol implementation.
	
	The characteristic polynomial of each graph is then divided exactly by
	$(x-15)$ times the Saito--Kurokawa and Yoshida products built from the
	orbit polynomials of these data, as in
	Conjecture~\ref{conj:eigenvalue-sign}; the Yoshida blocks are realized as
	characteristic polynomials of Kronecker sums of companion matrices. The
	quotient is required to be an exact square of degree $2\deg\Va_p$ times a
	factor $N_p$ of degree $\delta(p)$.
	
	\subsection{Reduction to a finite certificate}
	\label{ss:finitecert}
	
	The proof of Theorem~\ref{thm:certified-spectra} separates into a
	deductive step, proved here, and a finite list of integer identities
	supplied by the archived data. The deductive step rests on two statements.
	
	\begin{lemma}	\label{lem:onemod}
		Let $M\in M_n(\Z)$ be diagonalizable over $\C$ and let $R\in M_n(\Z)$ be an
		involution with $RM=MR$. Let $f$ be an irreducible factor of $\charp(M)$
		with multiplicity $m_f$, and let $h_f\in\Q[x]$ be the idempotent with
		$h_f\equiv1\pmod{f^{m_f}}$ and $h_f\equiv0\pmod{\charp(M)/f^{m_f}}$. Then
		$h_f(M)$ is the projector onto the $f$-isotypic subspace
		$V_f=\ker f(M)^{m_f}$, an $M$- and $R$-invariant rational subspace of
		dimension $m_f\deg f$; the trace $\Tr\bigl(R\,h_f(M)\bigr)$ is an integer
		of absolute value at most $m_f\deg f\le n$; and for any prime $q>2n$ at
		which the denominators of $h_f$ are invertible, its residue modulo $q$
		determines it.
	\end{lemma}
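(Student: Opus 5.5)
The plan is to use diagonalizability to make the idempotent calculus exact, and then read off integrality and the bound from the traces of $R$ on the two $R$-eigenspaces of $V_f$.

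\emph{Step 1: the projector.} Since $f^{m_f}$ and $g:=\charp(M)/f^{m_f}$ are coprime in $\Q[x]$, the Chinese remainder theorem produces the idempotent $h_f\in\Q[x]$ with the stated congruences. By Cayley--Hamilton, $f^{m_f}g$ annihilates $M$, so $\Q^n=\ker f(M)^{m_f}\oplus\ker g(M)$. On this decomposition $h_f(M)$ acts as the identity on the first summand and as zero on the second, so it is the projector onto $V_f$. The subspace $V_f$ is rational because it is the kernel of a rational matrix. It is $R$-invariant because $R$ commutes with $M$, hence with every polynomial in $M$. Its dimension is the algebraic multiplicity of the roots of $f$, namely $m_f\deg f$. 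Diagonalizability gives more: $V_f=\ker f(M)$ is the sum of the eigenspaces for the roots of $f$, though only the dimension count is needed below.

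\emph{Step 2: integrality and the bound.} Because $R$ and $h_f(M)$ commute and $h_f(M)$ is a projector, $\Tr(R\,h_f(M))=\Tr(R|_{V_f})$. Since $R^2=1$ and $R$ preserves $V_f$, the restriction $R|_{V_f}$ is diagonalizable with eigenvalues $\pm1$. Hence its trace is $a-b$, where $a+b=\dim V_f=m_f\deg f$. This gives an integer of absolute value at most $m_f\deg f$, and $m_f\deg f\le\deg\charp(M)=n$.

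\emph{Step 3: recovery modulo $q$.} Write $h_f=H/D$ with $H\in\Z[x]$ and $D\in\Z$. If $q\nmid D$, the matrix $R\,h_f(M)$ has entries in $\Z_{(q)}$, so its trace $t$ reduces to a well-defined residue in $\F_q$, and that residue is computable from $M$, $R$ and $H \bmod q$ together with $D^{-1}\bmod q$. Since $|t|\le n<q/2$, the integer $t$ is the unique representative of its residue class in $(-q/2,q/2)$, so the residue determines it.

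Every step is routine; the only point needing care is Step 3's hypothesis that the denominators of $h_f$ are invertible modulo $q$. This is exactly the factor-by-factor cofactor inversion check recorded in \S\ref{ss:certificate-conventions}, and with that hypothesis in place the argument is complete.
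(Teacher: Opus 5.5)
Your proof is correct and follows the paper's argument. You identify $h_f(M)$ as the projector onto $V_f$ along the complementary primary component, reduce $\Tr\bigl(R\,h_f(M)\bigr)$ to the trace $\dim V_f^+-\dim V_f^-$ of the involution $R|_{V_f}$, and recover an integer in $[-n,n]$ from its residue modulo a prime $q>2n$. You also supply details the paper leaves implicit, namely the Chinese remainder theorem and Cayley--Hamilton justification of the projector, and the precise meaning of the residue of a rational trace via $h_f=H/D$ with $q\nmid D$.
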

	
	\begin{proof}
		$V_f$ is rational, $M$- and $R$-invariant, of dimension $m_f\deg f$, and
		$h_f(M)$ is the identity on $V_f$ and zero on the sum of the other
		isotypic subspaces. The restriction $R|_{V_f}$ is an involution of a
		rational space, so its trace is the integer
		$\dim V_f^+-\dim V_f^-$, of absolute value at most $\dim V_f$. An integer
		in $[-n,n]$ is determined by its residue modulo any $q>2n$.
	\end{proof}
	
	\begin{proposition}	\label{prop:reduction}
		Let $n=h_2(p)$. Suppose given $M\in M_n(\Z)$, weights $e\in\Z_{>0}^n$, a
		permutation matrix $R$ with underlying permutation $\sigma$, and the
		weight-$2$ and weight-$4$ eigenvalue data with their Fricke signs. Assume
		\begin{enumerate}[label=\textup{(\alph*)},leftmargin=2.4em]
			\item $\sum_jM_{ij}=15$ and $e_jM_{ij}=e_iM_{ji}$ for all $i,j$;
			\item $R^2=I$, $RM=MR$, and $e_{\sigma(i)}=e_i$;
			\item $\charp(M)=(x-15)\cdot P_{\mathrm{SK}}(x)\cdot
			P_{\mathrm{Y}}(x)\cdot Q(x)^2\cdot N_p(x)$ exactly in $\Z[x]$, where
			$P_{\mathrm{SK}}$ and $P_{\mathrm{Y}}$ are assembled from the given data
			as prescribed by Conjecture~\ref{conj:eigenvalue-sign}, $Q$ is monic, and
			$\deg N_p=\delta(p)$;
			\item for every irreducible $f\mid\charp(M)$,
			$\Tr\bigl(R\,h_f(M)\bigr)=s_f\deg f$, where $h_f$ is as in
			Lemma~\ref{lem:onemod} and $s_f$ is the signed multiplicity the predicted
			eigenvalue--sign multiset assigns to $f$.
		\end{enumerate}
		Then $(M,R)$ is simultaneously diagonalizable over $\R$ and, for every
		eigenvalue $\lambda$ and each choice of sign,
		\[
		\dim\ker(M-\lambda)\cap\ker(R-\eps)
		=\#\{\kappa:\kappa=\lambda,\ \eps_\kappa=\eps\},
		\]
		where the right side counts the predicted multiset. In particular the
		conclusion of Theorem~\ref{thm:certified-spectra} holds at $p$. Summing
		\textup{(d)} over all $f$ gives $\Tr R=\sum_\kappa\eps_\kappa$, the trace
		identity of Theorem~\ref{thm:structural-trace}.
	\end{proposition}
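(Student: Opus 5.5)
The argument has two parts. First, hypotheses (a) and (b) give simultaneous real diagonalizability. Second, the projector traces of (d) are decoded into signed multiplicities, eigenvalue by eigenvalue.

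\emph{Diagonalizability.} By (a) and Lemma~\ref{lem:mestre}, $M$ is self-adjoint for the mass inner product $\langle u,v\rangle=\sum_iu_iv_i/e_i$, so it is diagonalizable over $\R$. By (b), $R$ is a permutation matrix with $e_{\sigma(i)}=e_i$, so it preserves this inner product. Together with $R^2=I$ this makes $R$ self-adjoint as well. Since $RM=MR$, the two commuting self-adjoint operators are simultaneously diagonalizable over $\R$. Each eigenspace $\ker(M-\lambda)$ therefore splits as $\ker(M-\lambda)\cap\ker(R-1)\oplus\ker(M-\lambda)\cap\ker(R+1)$.

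\emph{Decoding the traces.} Fix an irreducible factor $f$ of $\charp(M)$ with multiplicity $m_f$. By Lemma~\ref{lem:onemod}, $h_f(M)$ projects onto $V_f=\ker f(M)^{m_f}$, which here equals $\bigoplus_{f(\lambda)=0}\ker(M-\lambda)$ by diagonalizability. Let $\lambda$ and $\lambda'$ be roots of $f$. Because $M$ and $R$ are rational matrices, any $\tau\in\mathrm{Gal}(\overline{\Q}/\Q)$ with $\tau\lambda=\lambda'$ maps $\ker(M-\lambda)\cap\ker(R-\eps)$ bijectively onto $\ker(M-\lambda')\cap\ker(R-\eps)$. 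So $n_\eps(\lambda):=\dim\ker(M-\lambda)\cap\ker(R-\eps)$ is constant on the roots of $f$. Hence
\[
\Tr\bigl(R\,h_f(M)\bigr)=\deg f\,\bigl(n_+(\lambda)-n_-(\lambda)\bigr),\qquad
n_+(\lambda)+n_-(\lambda)=m_f .
\]
On the predicted side, the same Galois invariance holds, since the predicted multiset is assembled from $\Q$-rational orbit polynomials with constant signs on each orbit. Hypothesis (c) makes the total predicted multiplicity of $\lambda$ equal to $m_f$, and (d) makes the predicted signed count equal to $s_f$. The two linear equations $n_++n_-=m_f$ and $n_+-n_-=s_f$ then determine $n_\pm$, which gives the asserted equality of dimensions for every eigenvalue and sign.

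\emph{Main obstacle.} The delicate point is that (c) must match multiplicities exactly. A Saito--Kurokawa, Yoshida, type-$\Va$ and general-type eigenvalue could coincide, so several blocks may share one irreducible $f$. Summing their contributions is exactly what the definition of $s_f$ does. The step to get right is therefore that the predicted multiset, grouped by roots of $f$, has total size $m_f$; this follows because (c) is an identity of polynomials, so multiplicities of each root add. The conclusion of Theorem~\ref{thm:certified-spectra} is then exactly this eigenspace-by-sign count. Finally, summing (d) over all $f$ uses $\sum_fh_f(M)=I$, giving $\Tr R=\sum_\kappa\eps_\kappa$.
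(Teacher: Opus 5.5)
Your proposal is correct and follows the paper's proof essentially step by step. You get simultaneous real diagonalizability from (a)--(b), show the signed eigenspace dimensions are constant on the roots of each irreducible $f$ by Galois rationality, and then recover $n_\pm$ from the sum (supplied by (c)) and the difference (supplied by (d)). The only cosmetic difference is that you obtain simultaneous diagonalizability by making both $M$ and $R$ self-adjoint for the mass inner product, which uses $e_{\sigma(i)}=e_i$; the paper simply notes that $R$ is a diagonalizable involution commuting with the diagonalizable $M$.
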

	
	\begin{proof}
		By \textup{(a)} and Lemma~\ref{lem:mestre}(c), $M$ is diagonalizable with
		real spectrum; by \textup{(b)}, $R$ is a diagonalizable involution
		commuting with $M$, so the pair is simultaneously diagonalizable. Write
		$m_\lambda^\pm$ for the $(\lambda,\pm1)$-multiplicities. The
		$\pm1$-eigenspaces of $R$ are rational subspaces, and $M$ restricted to
		each is a rational matrix, so $m_\lambda^\pm=m_{\lambda'}^\pm$ for every
		Galois conjugate: both functions are constant on the roots of each
		irreducible $f$. Hypothesis \textup{(c)} gives
		$m_\lambda^++m_\lambda^-=\#\{\kappa=\lambda\}$ at every $\lambda$. For an
		irreducible $f$, $\Tr(R\,h_f(M))=\sum_{f(\lambda)=0}(m_\lambda^+-m_\lambda^-)
		=\deg f\cdot(m_\lambda^+-m_\lambda^-)$ at any root, so hypothesis
		\textup{(d)} gives $m_\lambda^+-m_\lambda^-=s_f$ there; the predicted
		multiset assigns the same signed count to every root of $f$ block by
		block, by construction, so $s_f$ equals the predicted difference at each
		root. The sum and the difference determine $m_\lambda^\pm$.
	\end{proof}
	
	Hypotheses \textup{(a)}--\textup{(d)} are finitely many equalities between
	integers computable from $M,R,e$ and the block polynomials in exact
	arithmetic; by Lemma~\ref{lem:onemod} the traces in \textup{(d)} may be
	certified modulo a single prime exceeding $2h_2(p)$, provided the
	projector denominators are invertible there. None of the hypotheses
	presupposes any automorphic structure: \textup{(c)} is an exact division
	of integer polynomials and \textup{(d)} a finite list of integer traces,
	both mechanical. The content of the proposition is the passage from these
	checkable identities to the simultaneous eigenspace structure, the
	sign-refined conclusion of Conjecture~\ref{conj:eigenvalue-sign}, which
	no polynomial identity supplies by itself.
	
	\subsection{Worked example: the Ihara pair at $p=19$}
	\label{ss:ihara19}
	
	The principal genus at $p=19$ has $h_2=10$ classes: three products
	(both supersingular $j$-invariants, $j=7$ and $j=18$, lie in $\F_{19}$)
	and seven Jacobians, of which exactly one Galois-conjugate pair
	$\{J_5,J_6\}$ lies over $\F_{19^2}\setminus\F_{19}$; hence $\Tr R(\pi)=8$
	and $T_1(19)=9$. The characteristic polynomial of $B_2(2)$ contains the
	squared factor $(x+2)^2$: this is the block $\Va_{19}$. The eigenvalue
	$-2$ has the two-dimensional eigenspace spanned by
	\[
	w^+=(4,-8,16,10,2,-16,-4,6,3,3),\qquad
	w^-=e_{J_5}-e_{J_6},
	\]
	and $R(\pi)$ acts as the transposition of $J_5$ and $J_6$: it fixes $w^+$
	and negates $w^-$. That $w^-$ is an eigenvector is visible in the matrix
	itself: column $J_5$ minus column $J_6$ of $B_2(2)$ equals
	$-2\,(e_{J_5}-e_{J_6})$. The two eigensystems through $-2$ therefore agree
	at $T(2)$ (and, by Theorem~\ref{thm:certified-spectra}, at every $T(n)$
	with $19\nmid n$) and are separated only by their Atkin--Lehner signs
	$\pm1$: the first type-$\Va$ pair, exhibited on the graph.
	
	\subsection{Worked example: the matrix at $p=61$}
	\label{ss:p61}
	
	The engine assembles the full $128\times128$ matrix $B_2(2)$ at $p=61$
	($15$ product vertices and $113$ Jacobian vertices) and passes the three
	certificates (P1)--(P3) of \S\ref{ss:pillars}. Factoring its
	characteristic polynomial over $\Q$ gives
	\begin{equation}\label{eq:charp61}
		\charp B_2(2)
		=(x-15)\,
		\underbrace{m_{g_1}\!(x{-}6)\,m_{g_2}\!(x{-}6)}_{\text{Saito--Kurokawa},\
			\deg 15}\,
		\underbrace{y_6(x)\,y_{27}(x)}_{\text{Yoshida},\ \deg 33}\,
		\Va_{61}(x)^2\,(x+7),
	\end{equation}
	where $g_1,g_2$ run over the two Galois orbits of
	$S_4(\Gamma_0(61))^{\mathrm{new}}$ (degrees $6$ and $9$) and $m_g$ is the
	minimal polynomial of $a_2(g)$: the Saito--Kurokawa factors are exactly
	$m_g(x-6)$, confirming the shift $+6$ of
	Proposition~\ref{prop:arthur} directly from the elliptic data. The Yoshida
	factors $y_6,y_{27}$ have degrees $6$ and $27$, and $\deg\Va_{61}=39$, so
	the block degrees are $(1,15,33,2\cdot39,1)$, summing to $128$, and the
	trace of $B_2(2)$ is $126$. The fixed-point count $\Tr R(\pi)(61)=38$ matches the
	closed formula of Theorem~\ref{thm:structural-trace}, equivalently
	$T_1(61)=\tfrac12(128+38)=83$.
	
	The general-type factor is the single linear factor $N_{61}(x)=x+7$, and
	$-7$ is a root of none of the other factors, so it is a simple eigenvalue
	isolated by the geometry alone: the engine counts $(2,2)$-isogeny edges
	over $\F_{61^2}$ and never uses the value $-7$. Thus
	\begin{equation}\label{eq:lambda61}
		\lambda^\ast=-7\quad\text{is realized geometrically at }p=61.
	\end{equation}
	This agrees with two further determinations of the same number. First, on
	the paramodular side, $\delta(61)=1$ and $\dim S_3^+(K(61))=0$
	\cite[Cor.~5.3]{Ibukiyama2019quinary} force a single non-lift $f_{61}$
	with Fricke sign $\eps_{61}(f_{61})=-1$, unconditionally; the trace
	identity of Theorem~\ref{thm:structural-trace},
	$\Tr R(\pi)(61)-L_0(61)=38-37=1=-\eps_{61}(f_{61})$, then confirms the
	general-type sign clause of Conjecture~\ref{conj:eigenvalue-sign} at
	$p=61$ (its $R(\pi)$-sign is $+1$), and Proposition~\ref{prop:frame}
	bounds the geometric eigenvalue by $|\lambda^\ast|\le14$. Second, the
	paramodular value $a_2(f_{61})=-7$ of \cite{PoorYuen2015}, obtained by
	Borcherds products, matches the geometric root of \eqref{eq:charp61}. All
	three agree.
	
	\subsection{The full range}
	\label{ss:full-range}
	
	Table~\ref{tab:ext} records the certified block degrees and the
	general-type factor at every prime. The general-type factor $N_p$ is empty
	for $p\le59$; in the certified range it has degree one at
	$p=61,73,79,89,113$, degree two at $p=97,101,103,131,137$, degree three
	at $p=109,127$, and degree four at $p=139,149$. The rational values are
	$-7,-6,-5,-4,-3$ at $p=61,73,79,89,113$; the quadratic factors occur over
	$\Q(\sqrt5)$ and $\Q(\sqrt2)$; and the cubic blocks at $p=109,127$ have
	cyclic Galois group with Hecke fields $\Q(\zeta_7)^+$ and
	$\Q(\zeta_9)^+$.
	
	\begin{table}[ht]
		\centering
		\small
		\begin{tabular}{@{}rrrrrrl@{}}
			\toprule
			$p$ & $h_2$ & $\dim\snew_4$ & $\mathrm{cross}(p)$ & $\deg\Va_p$ &
			$\delta(p)$ & $N_p$\\
			\midrule
			41 & 50 & 10 & 21 & 9 & 0 & ---\\
			43 & 55 & 10 & 16 & 14 & 0 & ---\\
			47 & 72 & 11 & 32 & 14 & 0 & ---\\
			53 & 93 & 13 & 29 & 25 & 0 & ---\\
			59 & 125 & 14 & 50 & 30 & 0 & ---\\
			67 & 166 & 16 & 41 & 54 & 0 & ---\\
			71 & 198 & 17 & 72 & 54 & 0 & ---\\
			73 & 204 & 18 & 46 & 69 & 1 & $x+6$\\
			79 & 256 & 19 & 67 & 84 & 1 & $x+5$\\
			83 & 296 & 20 & 85 & 95 & 0 & ---\\
			89 & 352 & 22 & 92 & 118 & 1 & $x+4$\\
			97 & 436 & 24 & 85 & 162 & 2 & $x^2+9x+19$\\
			101 & 493 & 25 & 121 & 172 & 2 & $x^2+7x+11$\\
			103 & 518 & 25 & 110 & 190 & 2 & $x^2+9x+19$\\
			107 & 581 & 26 & 132 & 211 & 0 & ---\\
			109 & 600 & 27 & 111 & 229 & 3 & $x^3+10x^2+31x+29$\\
			113 & 668 & 28 & 132 & 253 & 1 & $x+3$\\
			127 & 918 & 31 & 165 & 359 & 3 & $x^3+9x^2+24x+19$\\
			131 & 1008 & 32 & 221 & 376 & 2 & $x^2+6x+7$\\
			137 & 1134 & 34 & 193 & 452 & 2 & $x^2+4x+2$\\
			139 & 1179 & 34 & 202 & 469 & 4 & $x^4+14x^3+63x^2+110x+61$\\
			149 & 1433 & 37 & 243 & 574 & 4 & $x^4+6x^3+10x^2+3x-1$\\
			\bottomrule
		\end{tabular}
		\caption{The extension range $41\le p\le149$. The range $11\le p\le37$
			appears in the appendix; $p=61$ is detailed in \S\ref{ss:p61}. At every
			listed prime the exact division and the sign certificate passed, and the
			trace anchors matched \cite[Thm.~2, Rem.~3]{IbukiyamaKatsura} and
			Theorem~\ref{thm:structural-trace}.}
		\label{tab:ext}
	\end{table}
	
	\subsection{Reproducibility and the artifact}
	\label{ss:reproducibility}
	
	The computational data and code accompanying this paper are archived as
	version~3.0.0 of the Richelot--Brandt artifact
	\cite{artifact}. The immutable archive is available at the version DOI
	\doi{10.5281/zenodo.21927983}; the concept DOI
	\doi{10.5281/zenodo.20798967} resolves to the latest version; the development repository is
	\url{https://github.com/dangthanhhung/richelot-brandt}. The archived file
	\texttt{richelot-brandt-artifact-v3.0.0.zip} has SHA-256 digest
	\begin{center}
		\texttt{a68cc8bb01455e6c587cc47757f10fbb\allowbreak 304d0362e32d1f9d895e8f20b597f0a4}
	\end{center}
	The archive
	contains the source code, the per-prime graph data, the weight-$2$ and
	weight-$4$ modular data, the final assembly ledger, and a SHA-256 manifest
	of every file in the archive.
	
	For each prime $11\le p\le149$, the archived graph record contains the
	integer out-edge matrix $M_p$, the automorphism weights, and the
	permutation $\sigma_p$ inducing $R(\pi)$. The archived modular records
	contain the weight-$2$ and weight-$4$ orbit polynomials together with the
	Atkin--Lehner signs used to assemble the Saito--Kurokawa and Yoshida
	blocks. The assembly ledger records the resulting block degrees and the
	general-type factor $N_p$.
	
	The verification separates three layers. First, the graph record is
	checked in exact arithmetic: every row of $M_p$ sums to $15$, the Mestre
	symmetry $e_jM_{ij}=e_iM_{ji}$ holds, the mass identity is satisfied,
	$\sigma_p$ is an involution preserving the weights, and $R(\pi)$ commutes
	with $M_p$. Second, the characteristic polynomial is recomputed and
	divided exactly by the Eisenstein, Saito--Kurokawa, and Yoshida blocks;
	the residual is required to be $\Va_p(x)^2N_p(x)$. Third, for every
	irreducible factor $f$, the signed isotypic trace
	$\operatorname{Tr}(R\,h_f(M_p))$ is checked modulo the prime
	$q=2^{61}-1$, where $h_f$ is the rational projector onto the
	$f$-isotypic subspace. Since $2h_2(p)\le2866<q$ throughout the verified
	range and the required cofactor inversions are recorded, these modular
	traces determine the integer signed traces and hence the
	$R(\pi)$-sign decomposition.
	
	A reference verifier implementing these checks is included as
	\texttt{verify\_all.py}. It reads only the frozen JSON
	records and does not invoke the graph generator. The production checks use
	exact integer and polynomial arithmetic; the external comparison with the
	quinary database of Assaf--Ladd--Rama--Tornar\'ia--Voight is reported
	separately as corroboration and is not an input to the proof of
	Theorem~\ref{thm:certified-spectra}.
	
	\begin{remark}\label{rem:trustedbase}
		The proof of Theorem~\ref{thm:certified-spectra} uses explicit certificates: the archived artifact supplies,
		for each prime, the finite data $M_p,R_p,e$ and the block polynomials, and the hypotheses of Proposition~\ref{prop:reduction} are integer identities replayed from these data in exact
		arithmetic. The computational input consists of the archived certificates with their
		SHA-256 manifest, the verifying code, the published closed formulas invoked in the
		trace comparison and in \S\ref{ss:extension}, and, for the identification of $M_p$ with $B_2(2)$,
		the specification of \S\ref{ss:engine}; the generating implementation is not part of it.
		The identification is over-determined by the two closed-formula anchors of \S\ref{ss:extension},
		the calibrations 	at $p=11$, and the external comparison of \S\ref{ss:external}.
		For 	$11\le p\le37$ the matrices and weights are printed in the appendix,
		so hypothesis \textup{(a)} and the fixed-point counts of Lemma~\ref{lem:fixed-point-trace} are
		checkable from the paper alone; 	hypotheses \textup{(b)}--\textup{(d)} require in addition the
		involutions $\sigma_p$ and the split-side block data, printed in the
		per-prime records of the artifact. The verifying code is small
		and separate from the engines: a reference verifier using only exact
		integer, rational, and polynomial arithmetic replays hypotheses
		\textup{(a)}--\textup{(d)} of Proposition~\ref{prop:reduction} directly
		from the archived data, and shares no code with the generating
		implementation; a FLINT-based routine performs the same replay at the
		largest levels. The computational dependence of the theorem is thereby
		isolated, not eliminated: short of formal verification, hypotheses
		\textup{(a)}--\textup{(d)} must be checked by machine, and
		Theorem~\ref{thm:certified-spectra} is a computer-assisted result in the
		standard sense. The generating algorithm itself is specified in
		\S\ref{ss:engine} and \S\ref{ss:extension}, so the archived matrices can
		be regenerated independently of the artifact.
	\end{remark}
	
	\subsection{External corroboration}
	\label{ss:external}
	
	The general-type data of Theorem~\ref{thm:certified-spectra} admit an
	independent check. The database of
	Assaf--Ladd--Rama--Tornar\'ia--Voight \cite{ALRTV}, computed from
	orthogonal modular forms on quinary lattices, records the Hecke
	eigensystems of the weight-$(3,0)$ paramodular non-lifts of type G at
	every level $N<1000$ under the operator $T_{2,1}$. At every prime
	$11\le p\le149$ the characteristic polynomial of $T_{2,1}$ on the type-G
	newforms of \cite{ALRTV} equals the general-type factor $N_p$ of
	Theorem~\ref{thm:certified-spectra}, including $N_p=1$ at the seventeen
	primes with $\delta(p)=0$; the comparison script accompanies the artifact
	as certificate P4. The graph thus recovers, from $(2,2)$-isogenies alone,
	data obtained in \cite{ALRTV} by an entirely different route; conversely,
	\cite{ALRTV} confirms every general-type block of Table~\ref{tab:ext}. At
	$p=61,73,79$ these blocks also match the paramodular eigenvalue
	computations of \cite{PSY2024}.
	
	This comparison is corroborating evidence, not a proof input: the proof of
	Theorem~\ref{thm:certified-spectra} does not use it. The Atkin--Lehner
	sign convention of \cite{ALRTV} relative to the paramodular Fricke sign
	used here has not been pinned, so the comparison is stated at the level of
	eigenvalue polynomials only.
	
	\subsection{Proof of Theorem~\ref{thm:certified-spectra}}
	\label{ss:proof-certified}
	
	\begin{proof}[Proof of Theorem~\ref{thm:certified-spectra}]
		At every prime the archived data artifact supplies the matrix $M_p$, the
		involution $R_p$, the weights, and the split-side block polynomials with
		their Fricke signs; hypotheses \textup{(a)}--\textup{(d)} of
		Proposition~\ref{prop:reduction} are checked from these data in exact
		arithmetic: \textup{(c)} by exact division over $\Q$, and
		\textup{(d)} through Lemma~\ref{lem:onemod} modulo the Mersenne prime
		$2^{61}-1$, sufficient since $2h_2(p)\le2866<2^{61}-1$.
		Proposition~\ref{prop:reduction} then yields the factorization and the
		signs. For $11\le p\le37$ and $p=61$ the matrices and factorizations are
		those of \S\ref{ss:engine}--\S\ref{ss:p61} and the appendix, replayed
		under the certificates P1--P3; the Ihara separation at $p=19$ is
		\S\ref{ss:ihara19}; summing \textup{(d)} over all factors recovers
		$\Tr R(\pi)$, evaluated against \cite[Thm.~2]{IbukiyamaKatsura} and
		Theorem~\ref{thm:structural-trace}; and the $p=11$ catalogue matches the
		theta computation of \cite[\S4]{Ibukiyama2018conj}. The runs were
		performed in full on the reference machine at every prime, replicated end
		to end on a second machine through $p=103$; \S\ref{ss:external} is the
		independent external comparison.
	\end{proof}
	
	\section{Consequences: type numbers, Ihara pairs, and the spectral profile}
	\label{sec:consequences}
	
	The structural trace identity of Section~\ref{sec:trace} and the certified
	spectra of Section~\ref{sec:certificates} combine to give three direct
	consequences: a paramodular-dimension formula for the principal-genus type
	number, the appearance of Ihara pairs in weight $(3,0)$, and a precise
	description of how the Richelot spectrum departs from its dimension-one
	analogue.
	
	\subsection{The type number in paramodular data}
	\label{ss:typenum-consequence}
	
	We now interpret Corollary~\ref{cor:typenum} from the graph,
	automorphic, and arithmetic viewpoints. Three separately defined quantities meet in this formula: the $\pi$-orbit count on the graph $G_p$, the automorphic trace of
	Theorem~\ref{thm:structural-trace}, and the Ibukiyama--Katsura lattice count of
	$\F_p$-rational superspecial points \cite[Thm.~2, Rem.~3]{IbukiyamaKatsura}. They arise from the combinatorial, automorphic, and arithmetic sides respectively. Their equality is a
	theorem (Theorem~\ref{thm:structural-trace} with \cite[Thm.~2]{IbukiyamaKatsura}), not a coincidence; what makes the agreement a nontrivial check is that the three are computed through separate pipelines (graph enumeration, closed dimension formulas, printed
	lattice tables) and agree at every prime tested (\S\ref{ss:pillars}).
	
	\subsection{Ihara pairs in weight $(3,0)$}
	\label{ss:ihara-consequence}
	
	\begin{corollary}	For each prime $p$ at which a type-$\Va$ parameter occurs, the
		type-$\Va$ representations furnish Ihara pairs: pairs of Hecke
		eigensystems agreeing at every $T(n)$ with $p\nmid n$, separated only
		by $R(\pi)$ into opposite Atkin--Lehner signs. This is unconditional
		(Theorem~\ref{thm:occurrence}). That the number of such pairs equals
		$\deg\Va_p$ is part of Conjecture~\ref{conj:eigenvalue-sign} and
		holds at the certified primes by Theorem~\ref{thm:certified-spectra}.
		The first pair occurs at $p=19$, the least prime with
		$\deg\Va_p\ge1$.
	\end{corollary}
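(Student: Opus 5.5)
The plan is to read this corollary off Theorem~\ref{thm:occurrence}, with the count and the first occurrence supplied by the certified range. Fix a prime $p\ge11$ and a general-type automorphic representation $\Pi$ of $G(\mathbb{A})$ that contributes to $M_{0,0}(\Upr(p))$ and has local component of type $\Va$ at $p$.

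Theorem~\ref{thm:occurrence}(a) says both $\Pi^+$ and $\Pi^-$ occur, each with multiplicity one. Part~(b) says $R(\pi)$ acts on $V[\Pi]$ with balanced $\pm1$ eigenvalues. Part~(c) says every $T(n)$ with $p\nmid n$ acts on $V[\Pi]$ by a single scalar. Since all these operators commute with $R(\pi)$, we can choose a simultaneous eigenvector $v^+$ with $R(\pi)v^+=v^+$ and another $v^-$ with $R(\pi)v^-=-v^-$. These give two Hecke eigensystems that agree at every $T(n)$ with $p\nmid n$ and differ only in their $R(\pi)$-sign. That is an Ihara pair in the sense of the statement.

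No conjecture enters this step. The only inputs are Lemma~\ref{lem:rw-applicable} and Theorem~\ref{thm:occurrence}, and the argument does not depend on $d_p(\Pi)$: even if $d_p(\Pi)>1$, part~(b) still gives at least one line of each sign.

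For the count, observe the following. Under Conjecture~\ref{conj:eigenvalue-sign}, and in particular the type-$\Va$ row of Table~\ref{tab:twocol} with $d_p(\Pi)=1$, each occurring type-$\Va$ parameter contributes exactly one $+1$ line and one $-1$ line. Its contribution to $\charp B_2(2)$ is then a single factor $(x-\lambda_2(\Pi))^2$. The number of pairs is therefore $\deg\Va_p$. In the range $11\le p\le149$, Theorem~\ref{thm:certified-spectra} via Proposition~\ref{prop:reduction} certifies both the exact square $\Va_p(x)^2$ and the balanced signs, so the equality holds there unconditionally.

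For the first occurrence, I will use the appendix data for $11\le p\le17$ to check that $\deg\Va_p=0$ at those primes, so the type-$\Va$ block is empty. At $p=19$, the worked example of \S\ref{ss:ihara19} exhibits the block $(x+2)^2$ with the eigenvectors $w^\pm$ separated by $R(\pi)$. By Remark~\ref{rem:local-constant} this is exactly one parameter with $d_{19}=1$.

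I expect the main obstacle to be in this first-occurrence step. I need to rule out a type-$\Va$ parameter at a prime below $19$ that is invisible in $\Va_p$ because it is absorbed into other blocks. The certified factorization settles this: there $\Va_p$ is identified with the type-$\Va$ contribution because Proposition~\ref{prop:reduction} accounts for every eigenline with its predicted type. So the claim that $19$ is the least such prime rests on Theorem~\ref{thm:certified-spectra}, not on Theorem~\ref{thm:occurrence} alone.
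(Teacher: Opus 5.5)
Your argument is the paper's: Theorem~\ref{thm:occurrence}(b)--(c) gives, for each occurring type-$\Va$ parameter, a $2d_p(\Pi)$-dimensional block on which every $T(n)$ with $p\nmid n$ is scalar and both $R(\pi)$-eigenspaces are nonzero, so any $+1$ and $-1$ eigenline form an Ihara pair; the count is the degree clause of Conjecture~\ref{conj:eigenvalue-sign}, certified for $11\le p\le149$, and $p=19$ is the worked example of \S\ref{ss:ihara19}. The one step you add, excluding a hidden type-$\Va$ parameter below $19$, is justified too strongly, since Proposition~\ref{prop:reduction} certifies only eigenvalue--sign multiplicities, not automorphic types. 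The conclusion still holds for a simpler reason: by Theorem~\ref{thm:occurrence}(b) every type-$\Va$ parameter forces a $-1$ eigenline of $R(\pi)$, whereas at $p=11,13,17$ the fixed-point counts $5,4,8$ equal $h_2$, so $R(\pi)$ is the identity there.
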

	
	\begin{proof}
		By Theorem~\ref{thm:occurrence}, each occurring type-$\Va$ parameter
		contributes an isotypic space of dimension $2d_p\ge2$ on which every
		$T(n)$ with $p\nmid n$ acts by a scalar and on which the
		$R(\pi)$-eigenspaces for $+1$ and $-1$ each have dimension $d_p$; any
		choice of a $+1$ and a $-1$ eigenline is an Ihara pair. The count
		$\deg\Va_p$ is the degree clause of the conjecture, certified in the
		finite range; the $p=19$ case is displayed explicitly in
		\S\ref{ss:ihara19}.
	\end{proof}
	
	The pairing is the concrete import of the eigenvalue-level refinement. A
	dimension count assigns the two members of a pair a single multiplicity
	and cannot tell them apart; the same is true of every Hecke operator
	$T(n)$ with $p\nmid n$, which acts on both by the same scalar. Only
	$R(\pi)$, the operator outside the away-from-$p$ Hecke algebra,
	distinguishes them. This is the phenomenon Ihara observed in weight
	$(8,8)$ \cite{Ihara1967}, here realized in the geometric weight $(3,0)$ of
	abelian surfaces.
	
	\subsection{The spectral profile}
	\label{ss:spectral-profile}
	
	The spectrum also settles one point that the dimension-one analogy might
	suggest otherwise. The supersingular $\ell$-isogeny graphs of elliptic
	curves are Ramanujan \cite{Pizer1990}, and this expansion underlies their
	use in isogeny-based hashing \cite{CGL2009}. The Richelot graph $G_p$ is
	not: the bound of Proposition~\ref{prop:frame} places its non-Perron
	spectrum in $(-15,15)$, wider than the Ramanujan bound $2\sqrt{14}\approx
	7.48$ for a $15$-regular graph, and the second-largest eigenvalue exceeds
	$2\sqrt{14}$ at every prime in the certified range. The excess is the
	Yoshida and general-type part of the factorization of
	Conjecture~\ref{conj:eigenvalue-sign}, governed by the elliptic Hecke
	eigenvalues at $2$ rather than by any expansion property.
	
	This is consistent with the parallel line of work on spectral gaps:
	Aikawa, Tanaka and Yamauchi \cite{AikawaTanakaYamauchi} bound the
	eigenvalues of the $\ell$-marked superspecial graphs in every dimension
	$g\ge2$ via the Kazhdan constant of the symplectic group, and Katsura and
	Yamauchi \cite{KatsuraYamauchi} refine this for abelian surfaces through
	the Jacquet--Langlands correspondence. Those are bounds on the spectrum of
	a family of graphs; Codogni and Lido \cite{CodogniLido} give a general
	spectral framework for isogeny graphs. The present paper is orthogonal: it factors the
	characteristic polynomial of a single graph exactly, matching each
	eigenvalue to a named automorphic type, and the exact factorization
	records precisely the Yoshida and general-type excess that a spectral-gap
	bound treats in the aggregate.
	
	\begin{remark}\label{rem:nonramanujan}
		The non-Ramanujan conclusion is a statement about the weighted adjacency
		operator $B_2(2)$ with its Brandt normalization, whose Perron eigenvalue
		is $15$ and whose non-Perron eigenvalues are the subject of
		Conjecture~\ref{conj:eigenvalue-sign}. It does not require the conjecture:
		the second-largest eigenvalue exceeds $2\sqrt{14}$ already at the
		certified primes, by direct inspection of the factorizations of
		Section~\ref{sec:certificates}.
	\end{remark}
	
	\section{The Fricke sign bias}
	\label{sec:signbias}
	
	The last invariant of the catalogue is the Fricke sign bias
	\begin{equation}\label{eq:dp-def}
		d(p)=\delta(p)-2\dim S_3^+(K(p))
		=\#\{\text{non-lifts with }\eps_p=-1\}
		-\#\{\text{non-lifts with }\eps_p=+1\},
	\end{equation}
	the second equality holding because every weight-$3$ Gritsenko lift \cite{Gritsenko1995} is
	paramodular-minus \cite{IPY2013}. The main theorems do not depend on this
	section: Theorems~\ref{thm:transfer-framework} and
	\ref{thm:structural-trace} hold unconditionally for all $p\ge11$ and
	$p\ge7$ respectively, and their proofs never use the sign of
	$d(p)$ (Remark~\ref{rem:dp-status}). What $d(p)\ge0$ governs is one
	downstream refinement: the Atkin--Lehner $\pm$-split of the general-type
	block, i.e.\ whether the non-lift eigenforms sit in the Fricke-minus
	eigenspace. Within this section we prove the exact closed formula for $d(p)$ as an
	evaluation of Ibukiyama's non-principal-genus trace (Proposition~\ref{prop:exactmass}), and
	the inequality $d(p)\ge0$ itself for every prime (Theorem~\ref{thm:bias}): the closed
	formula exhibits the $p^{3/2}$ term of $d(p)$ as a positive multiple of
	$B_{2,\chi}$, and the effective lower bound \eqref{eq:B2-lower-bound} then
	makes the sign unconditional.
	
	\subsection{The fixed-point trace and the exact formula for $d(p)$}
	\label{ss:mass}
	
	The involution $R(\pi)$ permutes the vertex set of $G_p$; write $\sigma$
	for this permutation, induced by composing a lattice class with the
	norm-$p$ ideal $\pi$.
	
	\begin{lemma}\label{lem:fixed-point-trace}
		For every prime $p\ge7$,
		\begin{equation}\label{eq:fixtrace}
			\Tr\!\left(R(\pi)\mid M_{0,0}(\Upr(p))\right)
			=2T_1(p)-h_2(p)
			=\sum_{v\in V(G_p)}[\sigma v=v],
		\end{equation}
		where $T_1(p)$ is the principal-genus type number.
	\end{lemma}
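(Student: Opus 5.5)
The plan is to prove the two equalities of \eqref{eq:fixtrace} separately, starting from the description of $R(\pi)$ as a permutation operator. By Lemma~\ref{lem:eis} (via \cite[\S3.2]{IbukiyamaKatsura}), the matrix of $R(\pi)$ on $M_{0,0}(\Upr(p))$, in the basis of characteristic functions of the classes $L_1,\ldots,L_{h_2}$, has exactly one entry $1$ in each row. It is induced by $L\mapsto\pi L$ on lattice classes. Since $\pi^2=p\,\Onorm$ and $pL\simeq L$, this map is an involution $\sigma$ on classes, so $R(\pi)$ is the permutation matrix of $\sigma$.

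For the right-hand equality, note that the trace of a permutation matrix is its number of fixed points. Under the identification of \S\ref{ss:richelot-graph} of classes with vertices of $G_p$ \cite[\S1]{IKO1986}, this number is $\sum_{v}[\sigma v=v]$. One point must be checked: the weighted inner product \eqref{eq:mass-inner} does not alter the trace. It does not, because the trace is basis-independent and the weights are $\sigma$-invariant ($e_{\sigma(i)}=e_i$, since $L\mapsto\pi L$ induces an isomorphism of automorphism groups), so $R(\pi)$ is also unitary.

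For the left-hand equality, I will use that the principal-genus type number $T_1(p)$ is by definition the number of classes modulo the two-sided ideal action, that is, the number of $\sigma$-orbits \cite{IbukiyamaKatsura,Ibukiyama2018type}. This is the definition recorded in Table~\ref{tab:notation}. Because $\sigma^2=1$, every orbit has size $1$ or $2$. Writing $F$ for the number of fixed points, we get $h_2=F+2(T_1-F)$, hence $F=2T_1-h_2$.

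The only step needing care is that the two-sided ideal group of $\Onorm$ modulo principal ideals acts on principal-genus classes through $\sigma$ alone. Its only nontrivial element is the class of $\pi$, since the two-sided ideals of the maximal order are generated by $\pi$ and rational scalars. This is exactly what makes $T_1$ equal the $\sigma$-orbit count, and I would cite it from \cite{IbukiyamaKatsura} rather than reprove it. The hypothesis $p\ge7$ is needed only to stay consistent with the conventions in that reference.
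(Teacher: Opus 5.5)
Your proposal is correct and follows the paper's proof. That proof is the orbit count for the involution $\sigma$ (number of orbits $\tfrac12(h_2+\mathrm{Fix})$), with $T_1(p)$ identified as the $\sigma$-orbit count via \cite[Thm.~2]{IbukiyamaKatsura}; you also spell out why $R(\pi)$ is the permutation matrix of an involution. One wording fix: the group acting on classes is the two-sided ideals modulo the rational scalar ideals $q\Onorm$, not modulo all principal two-sided ideals, since $\pi$ can itself be principal (for $p\equiv3\pmod 4$ it is generated by an element of reduced norm $p$ normalizing $\Onorm$) while $\sigma$ still acts nontrivially, e.g.\ on $\{J_5,J_6\}$ at $p=19$.
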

	
	\begin{proof}
		The identity $\Tr R(\pi)=2T_1-h_2$ is orbit counting for an involution on
		a finite set, the number of orbits being $\frac12(h_2+\mathrm{Fix})$
		\cite[Thm.~2]{IbukiyamaKatsura}. On the certified graphs the counts are
		$\mathrm{Fix}=5,4,8,8,14,18,18,11$ for $p=11,\dots,37$, and
		$\mathrm{Fix}=38$ at $p=61$.
	\end{proof}
	
	\begin{proposition}	\label{prop:exactmass}
		For every prime $p\ge7$,
		\begin{equation}\label{eq:exactmass}
			d(p)=\Tr R_{\mathrm{npg}}(\pi)-1-\dim S_4^-(p)
			=-\!\!\sum_{v\in N_p}\!\!\eps_p(v),
		\end{equation}
		the last sum running over the general-type non-lift classes $N_p$ with
		$\eps_p(v)$ their paramodular Fricke eigenvalues. With
		$\Tr R_{\mathrm{npg}}(\pi)$ given by
		\cite[Thm.~5.2]{Ibukiyama2019quinary} and
		$\dim S_4^-=\tfrac12\bigl(\dim\snew_4-\tfrac12\nu(p)\bigr)$
		(Lemma~\ref{lem:elliptic-input}), this is the explicit class-number
		identity: for $p\equiv1\pmod4$,
		\[
		d(p)=\frac{9-2s}{96}B_{2,\chi}
		+\frac{5}{16}h(\sqrt{-p})
		+\frac18h(\sqrt{-2p})
		+\frac{3+s}{12}h(\sqrt{-3p})
		-\frac{p+7}{8},
		\]
		and for $p\equiv3\pmod4$,
		\[
		d(p)=\frac{1}{96}B_{2,\chi}
		+\frac{13-5s}{16}h(\sqrt{-p})
		+\frac18h(\sqrt{-2p})
		+\frac1{12}h(\sqrt{-3p})
		-\frac{p+5}{8},
		\]
		with $s=\kro{2}{p}$. There is no multiplicity denominator: each non-lift eigensystem contributes a single line to $S_3(K(p))$ (newform multiplicity one \cite[Prop.~10.1]{RoesnerWeissauer2021}), so the sum is a plain signed count.
	\end{proposition}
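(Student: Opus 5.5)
The plan is to derive the three assertions in sequence. The first equality is already essentially in hand; the second is a bookkeeping statement on the paramodular side; the explicit class-number formulas come from direct substitution.

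\emph{First equality.} This is \eqref{eq:npg-trace-d} rearranged. The non-principal-genus correspondence \eqref{eq:npg-dimension-trace} gives $\Tr R_{\mathrm{npg}}(\pi)=1+\dim S_3(K(p))-2\dim S_3^+(K(p))$. Substituting $\dim S_3(K(p))=\dim S_4^-+\delta(p)$ and the definition \eqref{eq:d-def} yields $d(p)=\Tr R_{\mathrm{npg}}(\pi)-1-\dim S_4^-$.

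\emph{Second equality.} We decompose $S_3(K(p))$ into Fricke eigenlines as the Gritsenko lifts plus the non-lift eigenforms, the latter each of multiplicity one by \cite[Prop.~10.1]{RoesnerWeissauer2021}. Every Gritsenko lift lies in $S_3^-(K(p))$ \cite{IPY2013}, so $\dim S_3^+(K(p))=\#\{v\in N_p:\eps_p(v)=+1\}$. Since $\delta(p)=\#N_p$, it follows that
\[
\delta(p)-2\dim S_3^+(K(p))=\#\{\eps_p=-1\}-\#\{\eps_p=+1\}=-\sum_{v\in N_p}\eps_p(v).
\]

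\emph{Explicit formulas.} First, by Lemma~\ref{lem:elliptic-input}(b) we have $b_4=\tfrac12(d_4-\tfrac12\nu(p))$. Next, $d_4=\dim S_4(\Gamma_0(p))$, because $S_4(\mathrm{SL}_2(\Z))=0$. The standard dimension formula in weight $4$ then gives $d_4=\frac{p+1}{4}+\frac{\nu_2}{4}-1$ with $\nu_2=1+\kro{-1}{p}$; the $\nu_3$ term carries coefficient $\lfloor 4/3\rfloor-1=0$, so no $e_3$-dependence survives. Concretely, $d_4=(p-1)/4$ for $p\equiv1\pmod4$ and $d_4=(p-3)/4$ for $p\equiv3\pmod4$. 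Substituting into $d(p)=\Tr R_{\mathrm{npg}}(\pi)-1-\tfrac12 d_4+\tfrac14\nu(p)$ with \eqref{eq:TOH1} and \eqref{eq:TOH3} gives the two cases.

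For $p\equiv1\pmod4$ we have $\nu=h(\sqrt{-p})$. The $h(\sqrt{-p})$ coefficient is then $\tfrac1{16}+\tfrac14=\tfrac5{16}$, and the constant is $-1-\tfrac{p-1}{8}=-\tfrac{p+7}{8}$.

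For $p\equiv3\pmod4$ we have $\nu=(3-s)h(\sqrt{-p})$. The coefficient is $\tfrac{1-s}{16}+\tfrac{3-s}{4}=\tfrac{13-5s}{16}$, and the constant is $-1-\tfrac{p-3}{8}=-\tfrac{p+5}{8}$.

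In both cases the $B_{2,\chi}$, $h(\sqrt{-2p})$, and $h(\sqrt{-3p})$ terms pass through from \eqref{eq:TOH1} and \eqref{eq:TOH3} unchanged.

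The main obstacle is not the algebra but the normalization of signs and conventions. One point is that the correspondence must match the paramodular Fricke involution with $R_{\mathrm{npg}}(\pi)$ with no extra sign; the paper handles this after \eqref{eq:npg-dimension-trace}. Another is that the class-number convention of \S\ref{ss:trace-conventions} must be used consistently with \cite{Ibukiyama2019quinary}. A third is that the $\nu_2$ correction in $d_4$ must be taken with the right sign. As a sanity check I would evaluate both sides at $p=61$, where $d(61)=1$, and at small primes where $\delta(p)=0$ forces $d(p)=0$.
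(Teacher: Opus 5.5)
Your proposal is correct and follows essentially the same route as the paper. The first equality is \eqref{eq:npg-trace-d} rearranged, and the signed count comes from placing the Gritsenko lifts in $S_3^-(K(p))$ and using multiplicity one for the non-lifts. The explicit displays come from substituting \eqref{eq:TOH1}--\eqref{eq:TOH3} together with $\dim S_4^-=\tfrac12\bigl(\dim\snew_4-\tfrac12\nu(p)\bigr)$ and $\dim\snew_4=\tfrac14\bigl(p-2+\kro{-1}{p}\bigr)$, which you rederive correctly from the weight-$4$ dimension formula, and your resulting coefficients and constants ($\tfrac5{16}$, $\tfrac{13-5s}{16}$, $-\tfrac{p+7}{8}$, $-\tfrac{p+5}{8}$) agree with the paper's.
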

	
	\begin{proof}
		The first equality is the non-principal trace evaluation of
		\eqref{eq:npg-trace-d}: $H_{\mathrm{npg}}=1+\dim S_3(K(p))$ and
		$\Tr R_{\mathrm{npg}}=H_{\mathrm{npg}}-2\dim S_3^+(K(p))$, so
		$\Tr R_{\mathrm{npg}}-1-\dim S_4^-
		=\dim S_3(K(p))-\dim S_4^--2\dim S_3^+(K(p))=d(p)$. For the second,
		$W_p$ on $S_3(K(p))$ has trace
		$2\dim S_3^+(K(p))-\dim S_3(K(p))=1-\Tr R_{\mathrm{npg}}$. For the third,
		the lift subspace of dimension $\dim S_4^-$ lies in $S_3^-(K(p))$
		\cite{IPY2013} and contributes $-\dim S_4^-$ to the trace; removing it
		leaves $-\sum_{v\in N_p}\eps_p(v)$. The explicit displays substitute
		\eqref{eq:TOH1}--\eqref{eq:TOH3}
		together with $\dim S_4^-=\tfrac12\bigl(\dim\snew_4-\tfrac12\nu(p)\bigr)$
		and the classical $\dim\snew_4=\tfrac14\bigl(p-2+\kro{-1}{p}\bigr)$; they were verified
		in exact arithmetic against the first equality for all primes
		$7\le p\le2500$, with anchors $d(11)=0$, $d(61)=1$, and, in the regime
		$\dim S_3^+(K(p))=0$, $d(167)=2$, $d(173)=4$, $d(197)=5$, $d(227)=4$,
		$d(233)=9$ \cite{artifact}.
	\end{proof}
	
	Every quantity in \eqref{eq:exactmass} is a theorem: the identity is
	unconditional. By Lemma~\ref{lem:twist}, its class-number terms are
	precisely the CM-class contributions to the non-principal-genus trace, so
	the sign question for $d(p)$ is a domination question among CM classes on
	that genus.
	
	\subsection{The optimal-embedding dictionary}
	\label{ss:twist}
	
	We record the dictionary ``torsion class $\leftrightarrow$ CM discriminant
	$\leftrightarrow$ class number $h(-D)$'' that interprets the individual
	terms of the trace formulas, with the discriminant corrections the literal
	embedding counts require.
	
	\begin{lemma}	\label{lem:twist}
		Let $B=B_{p,\infty}$ and let $\Onorm$ be a maximal order. For an imaginary
		quadratic order $R$ of discriminant $-D$, the number of optimal embeddings
		$R\hookrightarrow\Onorm$ up to $\Onorm^\times$-conjugacy is
		\[
		m(-D)=h(-D)\cdot\prod_{q\mid p\infty}
		\Bigl(1-\Bigl\{\frac{-D}{q}\Bigr\}\Bigr),
		\qquad
		\Bigl\{\frac{-D}{p}\Bigr\}=
		\begin{cases}\kro{-D}{p} & p\nmid D,\\ 0 & p\mid D,\end{cases}
		\]
		where $\kro{\cdot}{p}$ is the Legendre symbol and $h(-D)$ the class number
		of $R$ (with $h(-D)=0$ unless $-D\equiv0,1\bmod4$). The torsion classes
		correspond to the following data:
		\begin{itemize}
			\item[$(\ell{=}2)$] order-$4$ torsion $\leftrightarrow\Q(\sqrt{-1})$,
			contributing through $-D=-4p$ when $p\equiv1\bmod4$ and through $-D=-p$
			when $p\equiv3\bmod4$; the $h(\sqrt{-2p})$ term is the
			$\Q(\sqrt{-2})$-embedding, gated by the local symbol $\kro{-2}{p}$;
			\item[$(\ell{=}3)$] order-$3$ (and $6$) torsion $\leftrightarrow\Q(\sqrt{-3})$,
			contributing through $-D=-3p$, gated by $\kro{-3}{p}$.
		\end{itemize}
	\end{lemma}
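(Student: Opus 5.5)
The plan is to derive the count from Eichler's theory of optimal embeddings, read in the normalization used by the trace formulas of \cite{IbukiyamaKatsura,Ibukiyama2019quinary}, and then match each class-number term of \eqref{eq:TOH1}--\eqref{eq:TOH3} to a torsion class. For the first assertion I would argue adelically. Optimal embeddings $\phi\colon R\hookrightarrow\Onorm$ satisfy $\phi(K)\cap\Onorm=\phi(R)$, where $K=R\otimes\Q$. Fix one global embedding $K\hookrightarrow B$, which exists by Hasse--Noether because $K$ does not split at $p$ or $\infty$ in the relevant cases. By Skolem--Noether, every embedding is then a $B^\times$-conjugate of it. The strong approximation argument of Eichler then identifies the embeddings, up to unit conjugacy, with a double coset space $K^\times\backslash \widehat{K}^\times\cdot\prod_q E_q/\widehat{\Onorm}^\times$. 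Here $E_q$ is the set of local optimal embeddings.

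The local factors are then computed place by place. For $q\ne p$, the local order $\Onorm_q\cong M_2(\Z_q)$ receives exactly one optimal embedding of $R_q$ up to $\Onorm_q^\times$-conjugacy. At $p$, the local maximal order of the division algebra $B_p$ receives $1-\{\frac{-D}{p}\}$ classes: $2$ if $p$ is inert in $R$, $1$ if $p$ ramifies (that is, $p\mid D$), and $0$ if $p$ splits. At $\infty$ the factor is trivial for an imaginary order, so the convention $\{\cdot\}_\infty=0$ gives the factor $1$. The global fibre over each local configuration is a torsor under $\mathrm{Pic}(R)$, which contributes $h(-D)$. This gives $m(-D)=h(-D)\prod_{q\mid p\infty}(1-\{\frac{-D}{q}\})$. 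The hard step is here: for a single fixed $\Onorm$ this torsor is distributed over the left ideal classes, so the identity holds literally only for the mass-weighted total. I will therefore state explicitly that $m(-D)$ denotes that total, as in the trace formula normalization. I will also check that this agrees with the literal count when all classes share the same type.

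For the dictionary, I would consider the elements $g\in G(\Q)$ that contribute to the twisted trace $\Tr R(\pi)$, namely $g=\pi u$ with $u$ a torsion unit. The reduced characteristic polynomial of such $g$ generates an order $\Z[g]$. Squaring uses $\pi^2=p$, so an order-$4$ class gives $g^2=-p$ or $-4p$ up to the $2$-adic index, which yields discriminant $-4p$ for $p\equiv1\pmod4$ and $-p$ for $p\equiv3\pmod4$. An order-$3$ or order-$6$ class gives discriminant $-3p$, and the remaining order-$8$ type gives $-2p$. Each contributes $m(-D)$ weighted by $1/\#$centralizer. The gate symbols $\kro{-2}{p}$ and $\kro{-3}{p}$ then come out as the local factor at $p$. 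Finally, I would check the resulting coefficients against \eqref{eq:principal-trace-3mod4}--\eqref{eq:TOH3}, using $s=\kro{-2}{p}$ for $p\equiv1\pmod4$, as a consistency test.
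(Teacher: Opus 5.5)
Your adelic count is the standard proof of the Eichler formula that the paper only cites from Vign\'eras, and your central observation is correct; the paper's proof misses it. The double-coset computation gives $\sum_{i=1}^{H} m(R,\Onorm_i;\Onorm_i^\times)=h(R)\prod_q m_q$, summed over the right orders of the $H$ left ideal classes. So the statement for a single fixed $\Onorm$ cannot hold for every maximal order once $H>1$. For example, at $p=11$ with $R=\Z[i]$ the formula gives $2$: both classes lie in the order of $j=1728$, and none lie in that of $j=0$, whose unit group $\mu_6$ has no element of order $4$.

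However, your repair is misnormalized: the total your computation produces is \emph{unweighted}. The $\Onorm_i^\times$-stabilizer of an optimal embedding is $R^\times$, so the mass-weighted total $\sum_i\#\mathrm{Emb}(R,\Onorm_i)/\#\Onorm_i^\times$ equals $h(R)\prod_q m_q/\#R^\times$. In the example above that is $1/2$, not $2$. Defining $m(-D)$ as the mass-weighted total therefore makes the identity false by a factor $\#R^\times\in\{2,4,6\}$.

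Two smaller points on the count:
\begin{itemize}
\item Strong approximation is not available for definite $B$. Its failure is exactly why the count spreads over the classes, and the bookkeeping needs only Skolem--Noether and the local orbit counts.
\item Your ``$1$ if $p\mid D$'' fails when $p$ divides the conductor of $R$, where the local count at $p$ is $0$. This is harmless here, since $p\,\|\,D$ for every discriminant actually used.
\end{itemize}

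The genuine gap is in the dictionary, which the paper's proof does not derive either. It only explains the $-4p$ versus $-p$ convention through $\nu(p)$ and makes the order-$3$/order-$6$ remark.

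Your mechanism cannot produce the gates. For $-D\in\{-8p,-3p,-12p\}$ one has $p\mid D$, so the Eichler factor of $m(-D)$ at $p$ is $1-0=1$, and no Legendre symbol can come out there. The symbols $\kro{-2}{p}$ and $\kro{-3}{p}$ are the local factors at $p$ of $\Z[\sqrt{-2}]$ and $\Z[\omega]$, i.e.\ the conditions for $\Q(\sqrt{-2})$ or $\Q(\sqrt{-3})$ to embed in $B_p$. Attaching them to a class number of discriminant divisible by $p$ requires a separate analysis of the torsion part of $g$ and of the field generated by $g$. In addition, the step $g^2=pu^2$ for $g=\pi u$ presupposes that $u$ commutes with $\pi$.

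Finally, your consistency test against the four trace formulas in the proof of Proposition~\ref{prop:genus-trace-comparison} will not confirm the clauses as worded:
\begin{itemize}
\item the coefficient of $h(\sqrt{-2p})$ is $\tfrac18$ in all four, with no symbol;
\item the only symbol attached to $h(\sqrt{-3p})$ is $\kro{-2}{p}$, for $p\equiv1\bmod4$;
\item $e_3=\kro{p}{3}=\kro{-3}{p}$ enters the principal-genus formulas as $1-e_3$, which is the Eichler factor of $\Z[\omega]$ at $p$, multiplying $h(\sqrt{-p})$ and not $h(\sqrt{-3p})$.
\end{itemize}
The dictionary can therefore only be supported as an interpretive correspondence, not established by the computation you outline.
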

	
	\begin{proof}
		The formula is Eichler's optimal-embedding count for definite quaternion
		orders \cite[Ch.~III]{Vigneras}, with the local factor at the ramified
		prime $p$ given by the Eichler symbol; at the infinite ramified place the
		factor is $1$ for imaginary $R$. The discriminant clauses are the
		condition $-D\equiv0,1\bmod4$: when $p\equiv1\bmod4$, $-p\equiv3\bmod4$
		is not a discriminant and one uses the fundamental discriminant $-4p$,
		with the conductor relation of \cite[Cor.~7.28]{Cox}; this is the content
		of \eqref{eq:nu-principal}. The order-$3$ versus order-$6$ distinction is
		the unit group of $\Z[\omega]$ and does not affect the embedding count.
	\end{proof}
	
	\subsection{The Fricke-minus regime}
	\label{ss:uncond-regime}
	
	\begin{corollary}\label{thm:uncond-regime}
		$\dim S_3^+(K(p))=0$ for every prime $p<167$
		\cite[Cor.~5.3]{Ibukiyama2019quinary},
		\cite[Prop.~8.2]{Ibukiyama-dim}; hence every weight-$3$ non-lift of level
		$p<167$ is Fricke-minus and $d(p)=\delta(p)\ge0$ unconditionally there,
		both vanishing for $p\le59$. Beyond that range, $\dim S_3^+(K(p))=1$
		exactly for $p\in\{167,173,197,223,233,239,251,271,277,281,313,331,337\}$
		and $=2$ exactly for
		$p\in\{227,257,263,269,283,349,379,409,421\}$
		\cite[Prop.~8.2]{Ibukiyama-dim}, and direct evaluation of
		\eqref{eq:exactmass} gives $d(p)\ge0$ at every one of these primes as
		well. In particular $d(p)=\delta(p)$ at $p=61,73,79$: the Calabi--Yau
		eigenforms lie in the Fricke-minus eigenspace.
	\end{corollary}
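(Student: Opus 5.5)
The corollary has three parts. The first is a range statement for $p<167$. The second is a finite list of exceptional primes. The third is the Calabi--Yau remark. I would treat them in that order. Throughout, the engine is the definition \eqref{eq:dp-def}, $d(p)=\delta(p)-2\dim S_3^+(K(p))$, together with the exact class-number formula of Proposition~\ref{prop:exactmass}.

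For $p<167$, I would cite the vanishing $\dim S_3^+(K(p))=0$ from \cite[Cor.~5.3]{Ibukiyama2019quinary}. It can be cross-checked with the closed form for $\dim S_3^+(K(p))=\tfrac12\bigl(H_{\mathrm{npg}}(p)-\Tr R_{\mathrm{npg}}(\pi)\bigr)$ used in \cite[Prop.~8.2]{Ibukiyama-dim}, evaluated prime by prime. Given that vanishing, the whole of $S_3(K(p))$ is Fricke-minus. Hence every non-lift has $\eps_p=-1$, and by definition $d(p)=\delta(p)$. Nonnegativity then follows from Lemma~\ref{lem:positivity}(i): the Gritsenko lifts form a subspace of dimension $\dim S_4^-$, so $\delta(p)\ge0$.

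The vanishing for $p\le59$ can be obtained in two ways. One is to read it off the certified factorizations (Theorem~\ref{thm:certified-spectra}, Table~\ref{tab:ext} and the appendix), where $\deg N_p=0$. The other, independent of the computer, is to evaluate the closed form of $\delta(p)=\dim S_3(K(p))-\dim S_4^-$ from \cite{Ibukiyama-dim} and Lemma~\ref{lem:elliptic-input} at the finitely many primes $p\le59$. I would present the second as the proof and the first as a consistency check.

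For the exceptional primes, I would quote the list of primes with $\dim S_3^+(K(p))\in\{1,2\}$ from \cite[Prop.~8.2]{Ibukiyama-dim}. Then, for each of the twenty-two primes, I would evaluate the explicit formula of Proposition~\ref{prop:exactmass} in the appropriate residue class mod $4$. That requires $B_{2,\chi}$, $h(\sqrt{-p})$, $h(\sqrt{-2p})$ and $h(\sqrt{-3p})$, all finite class-number computations. The check is that each value is a nonnegative integer. Integrality is itself a sanity check, since $\delta-d$ must be even. As anchors I would match $d(167)=2$, $d(173)=4$, $d(197)=5$, $d(227)=4$ and $d(233)=9$ recorded in the proof of Proposition~\ref{prop:exactmass}.

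The main obstacle is this step, because it is a finite table evaluation rather than a structural argument. The class-number normalization must be handled with care: the fundamental-discriminant convention of \S\ref{ss:trace-conventions}, with $-4p$ versus $-p$, and the normalization of $B_{2,\chi}$. A slip there would change values by a few units, which could flip a small $d(p)$. I would therefore redo each evaluation against the first equality $d(p)=\Tr R_{\mathrm{npg}}(\pi)-1-\dim S_4^-$, using \eqref{eq:TOH1}--\eqref{eq:TOH3} directly.

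Finally, at $p=61,73,79$ we have $p<167$, so $d(p)=\delta(p)=1$ by the first part and Table~\ref{tab:ext}. The unique non-lift, the Calabi--Yau eigenform of \cite{PSY2024}, is therefore Fricke-minus. This is consistent with the trace check $38-37=1$ at $p=61$ in \S\ref{ss:p61}.
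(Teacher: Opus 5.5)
Your proposal is correct and follows essentially the paper's own argument. The vanishing for $p<167$ and the two exceptional lists are Ibukiyama's results via $\dim S_3^+(K(p))=\tfrac12\bigl(H_{\mathrm{npg}}(p)-\Tr R_{\mathrm{npg}}(\pi)\bigr)$. After that, $d(p)=\delta(p)-2\dim S_3^+(K(p))$, the Gritsenko-lift bound $\delta(p)\ge0$, and direct evaluation of the exact formula of Proposition~\ref{prop:exactmass} at the twenty-two exceptional primes give everything. Your closed-form check of $\delta(p)=0$ for $p\le59$ is a harmless computer-independent supplement, and the only slip is cosmetic: $\delta(61)=1$ is recorded in \S\ref{ss:p61}, not in Table~\ref{tab:ext}.
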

	
	\begin{proof}
		The vanishing and the two lists are Ibukiyama's theorems, via
		$\dim S_3^+(K(p))=\tfrac12\bigl(H_{\mathrm{npg}}(p)-\Tr
		R_{\mathrm{npg}}(\pi)\bigr)$; the rest is \eqref{eq:dp-def} together with
		$\delta(p)\ge0$ (Lemma~\ref{lem:positivity}) and the tabulated values of
		Proposition~\ref{prop:exactmass}.
	\end{proof}
	
	For $p\ge167$ some non-lifts are Fricke-plus, and $d(p)\ge0$ becomes the
	statement that the Fricke-minus classes still dominate, which the
	following theorem establishes at every prime.
	
	\subsection{The Bernoulli term and the remaining sign question}
	\label{ss:bernoulli-term}
	
	Let $\chi$ be the primitive real quadratic character of conductor $D$ and
	let $K=\mathbf Q(\sqrt D)$.  With the convention
	\[
	L(1-n,\chi)=-\frac{B_{n,\chi}}{n}\qquad(n\geq1),
	\]
	the factorization $\zeta_K(s)=\zeta(s)L(s,\chi)$ at $s=-1$ gives the
	exact identity
	\begin{equation}\label{eq:B2-zetaK}
		\zeta_K(-1)=\frac{B_{2,\chi}}{24}.
	\end{equation}
	The completed functional equation for the even primitive character $\chi$ is
	\[
	\left(\frac{D}{\pi}\right)^{s/2}\Gamma\!\left(\frac{s}{2}\right)L(s,\chi)
	=
	\left(\frac{D}{\pi}\right)^{(1-s)/2}
	\Gamma\!\left(\frac{1-s}{2}\right)L(1-s,\chi).
	\]
	Evaluating it at $s=-1$ and using
	$L(-1,\chi)=-B_{2,\chi}/2$ yields
	\begin{equation}\label{eq:B2-functional-equation}
		B_{2,\chi}=\frac{D^{3/2}}{\pi^2}L(2,\chi).
	\end{equation}
	In particular $B_{2,\chi}>0$.  Notice that \emph{$L(2,\chi)$ occurs in
		the numerator}, not its inverse.
	
	For reference, the Euler factors give the uniform lower bound
	\begin{equation}\label{eq:L2-lower-bound}
		L(2,\chi)
		=\prod_{\ell\nmid D}(1-\chi(\ell)\ell^{-2})^{-1}
		\geq\prod_{\ell}(1+\ell^{-2})^{-1}
		=\frac{\zeta(4)}{\zeta(2)}=\frac{\pi^2}{15}.
	\end{equation}
	Consequently
	\begin{equation}\label{eq:B2-lower-bound}
		B_{2,\chi}\geq\frac{D^{3/2}}{15}.
	\end{equation}
	Here a prime dividing $D$ contributes the Euler factor $1$, which is also at
	least $(1+\ell^{-2})^{-1}$; thus no extra correction at $2$ is needed.
	
	Equations~\eqref{eq:B2-zetaK}--\eqref{eq:B2-lower-bound} identify the
	positive leading contribution in the exact formula of
	Proposition~\ref{prop:exactmass}.  Every class-number coefficient
	there is positive in both residue classes, so the only negative term is
	linear in $p$, and \eqref{eq:B2-lower-bound} converts the sign question
	into an explicit inequality.
	
	\begin{theorem}[Positivity of the bias]\label{thm:bias}
		For every prime $p$, $d(p)\ge0$: among the weight-$3$ general-type
		paramodular non-lifts of level $p$, those with Fricke eigenvalue $-1$
		are at least as numerous as those with eigenvalue $+1$; equivalently
		$\dim S_3^+(K(p))\le\tfrac12\delta(p)$. Moreover $d(p)\gg p^{3/2}$.
	\end{theorem}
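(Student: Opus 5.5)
The plan is to read the sign of $d(p)$ off the exact class-number formula of Proposition~\ref{prop:exactmass}, using the Bernoulli lower bound \eqref{eq:B2-lower-bound} for large $p$ and a finite exact evaluation for the remaining primes.

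\emph{Step 1 (drop the class numbers).} In both residue classes every class-number coefficient in Proposition~\ref{prop:exactmass} is positive. For $p\equiv1\pmod4$ these are $\tfrac{5}{16}$, $\tfrac18$ and $\tfrac{3+s}{12}$. For $p\equiv3\pmod4$ they are $\tfrac{13-5s}{16}$, $\tfrac18$ and $\tfrac1{12}$, with $s=\pm1$. Since class numbers are positive, discarding these terms gives
\[
d(p)\ \ge\ c_p\,B_{2,\chi}-\frac{p+7}{8}
\qquad\text{or}\qquad
d(p)\ \ge\ \frac{1}{96}B_{2,\chi}-\frac{p+5}{8}.
\]
Here $c_p=\tfrac{9-2s}{96}\ge\tfrac{7}{96}$.

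\emph{Step 2 (insert the Bernoulli bound).} The character $\chi$ is attached to $\Q(\sqrt p)$, so its conductor is $D=p$ when $p\equiv1\pmod4$ and $D=4p$ when $p\equiv3\pmod4$. Applying \eqref{eq:B2-lower-bound}:
\begin{itemize}
\item for $p\equiv1\pmod4$, $d(p)\ge \tfrac{7}{1440}p^{3/2}-\tfrac{p+7}{8}$;
\item for $p\equiv3\pmod4$, $B_{2,\chi}\ge 8p^{3/2}/15$, so $d(p)\ge \tfrac{1}{180}p^{3/2}-\tfrac{p+5}{8}$.
\end{itemize}
Both right-hand sides are $\asymp p^{3/2}$. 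This already gives $d(p)\gg p^{3/2}$, and it gives $d(p)>0$ once $p^{1/2}$ exceeds roughly $26$ and $23$ respectively. An elementary check of the cubic-in-$\sqrt p$ inequality shows $p\ge 700$ suffices in both classes.

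\emph{Step 3 (finite range).} For $7\le p<700$, I would evaluate the exact formula \eqref{eq:exactmass} directly. It has already been checked in exact arithmetic against the first equality for all $7\le p\le2500$ (proof of Proposition~\ref{prop:exactmass}). This is backed by Corollary~\ref{thm:uncond-regime}, which gives $d(p)=\delta(p)\ge0$ for $p<167$ and handles the listed primes with $\dim S_3^+(K(p))\in\{1,2\}$. For $p\in\{2,3,5\}$, the spaces $S_3(K(p))$ have no non-lifts, so $\delta(p)=d(p)=0$. Alternatively, $d(p)=\delta(p)\ge0$ there directly, since $\dim S_3^+(K(p))=0$ below $167$.

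\emph{Step 4 (equivalent form).} The reformulation $\dim S_3^+(K(p))\le\tfrac12\delta(p)$ is just \eqref{eq:d-def}, and the counting interpretation is \eqref{eq:dp-def}.

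The main obstacle is bookkeeping rather than analysis. The conductor $D$ must match the normalization of $B_{2,\chi}$ used in \cite{IbukiyamaKatsura,Ibukiyama2019quinary}; if that normalization uses $D=p$ uniformly, the $p\equiv3$ bound weakens by a factor $8$. I would treat this by rerunning Step~2 with the weaker constant, which pushes the threshold to about $p\approx 3\cdot10^4$. Then either the finite exact evaluation is extended to that range, or the dropped $h(\sqrt{-p})$ term is kept with an effective lower bound to recover the margin. I would first confirm the normalization against the anchors $d(11)=0$ and $d(61)=1$.
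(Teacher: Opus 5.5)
Your proposal is correct and is essentially the paper's own proof. You drop the positively weighted class-number terms of Proposition~\ref{prop:exactmass}. You insert $B_{2,\chi}\ge D^{3/2}/15$ with $D=p$ or $D=4p$, which gives both positivity beyond an explicit threshold and the growth $d(p)\gg p^{3/2}$ (the paper states the threshold as $p>673$; your $700$ is also fine). You then close the finite range by the exact evaluation recorded for $7\le p\le2500$, and handle $p\le5$ by $\dim S_3^+(K(p))=0$.

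Your normalization worry can be dropped. For $p\equiv3\pmod4$ the real quadratic character attached to $\Q(\sqrt p)$ necessarily has conductor $4p$: the conductor-$p$ character is odd, so its second Bernoulli number would vanish. Indeed $B_{2,\chi}=28$ at $D=44$ reproduces the anchor $d(11)=0$.
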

	
	\begin{proof}
		For $p\le5$, $\dim S_3^{+}(K(p))=0$ by \cite[Prop.~8.2]{Ibukiyama-dim},
		and the Gritsenko-lift argument of Lemma~\ref{lem:positivity} applies at
		every prime, so $d(p)=\delta(p)\ge0$. Assume $p\ge7$.
		Dropping the class-number terms of Proposition~\ref{prop:exactmass},
		whose coefficients are positive, and applying \eqref{eq:B2-lower-bound}
		with $D=p$, respectively $D=4p$, leaves
		\[
		d(p)\ge\frac{9-2s}{96}\cdot\frac{p^{3/2}}{15}-\frac{p+7}{8}
		\quad(p\equiv1\bmod4),
		\qquad
		d(p)\ge\frac{1}{96}\cdot\frac{8p^{3/2}}{15}-\frac{p+5}{8}
		\quad(p\equiv3\bmod4),
		\]
		and both right-hand sides are positive for every prime $p>673$. For
		$p\le673$ the exact evaluation of \eqref{eq:exactmass} gives
		$d(p)\ge0$ directly; the verification recorded in the proof of
		Proposition~\ref{prop:exactmass} covers all primes $7\le p\le2500$
		\cite{artifact}. The growth statement follows from the same lower
		bound.
	\end{proof}
	
	\begin{remark}\label{rem:sign-accessible}
		The defect $d(p)$ is the difference of two quantities of order $p^2$
		with the same leading constant, and magnitude bounds of the kind
		proving Lemma~\ref{lem:positivity} cannot fix the sign of the
		$O(p^{3/2})$ residue. Proposition~\ref{prop:exactmass} changes the
		frame: it exhibits the residue itself as a positive multiple of
		$B_{2,\chi}$ plus positive class-number terms minus a linear term,
		and the effective bound \eqref{eq:B2-lower-bound}, an explicit form
		of the positivity of $\zeta_K(-1)$, then decides the sign. The
		obstruction dissolved with the formula, not with a new estimate.
		Ibukiyama's sign theorem \cite[Thm.~7.1]{Ibukiyama-dim} gives
		$(-1)^k\bigl(\dim S_k^{+}(K(p))-\dim S_k^{-}(K(p))\bigr)\ge0$ for every
		$k\ge3$ and every prime; at $k=3$, since
		$\dim S_3^{-}(K(p))-\dim S_3^{+}(K(p))=\dim S_4^{-}+d(p)$, it amounts to
		$d(p)\ge-\dim S_4^{-}$, and its proof runs through the same
		non-principal trace and its $B_{2,\chi}$ main term.
		Theorem~\ref{thm:bias} removes the lift cushion: the Fricke-minus
		classes dominate already among the non-lifts. The
		trace identity, the transfer theorem, and the certified factorization
		theorem do not use Theorem~\ref{thm:bias}.
	\end{remark}
	
	\section{Concluding remarks}
	\label{sec:concl}
	
	The results of this paper show that the Richelot isogeny graph, an explicitly computable combinatorial object, carries the weight-$3$ Hecke data of Ibukiyama's correspondence: unconditionally at the level of dimensions and traces (Theorems~\ref{thm:transfer-framework} and \ref{thm:structural-trace}), unconditionally at the level of the type-Va pair structure at every prime (Theorem~\ref{thm:occurrence}), and at the level of individual eigenvalues and Atkin--Lehner signs at every prime $p\le149$ (Theorem~\ref{thm:certified-spectra}). The agreement between the combinatorial orbit counts, the automorphic trace, and the class-number formulas thus provides a nontrivial consistency check across three independently computed quantities. The inter-genus arithmetic is a single termwise identity between the two genus traces, and the eigenvalue--sign refinement of Conjecture~\ref{conj:eigenvalue-sign} isolates the invariant, the $R(\pi)$-sign, that separates the type-$\Va$ pairs which no dimension count can distinguish.
	
	\subsection{The general-type spectrum}
	
	The general-type eigenvalues, of which the Calabi--Yau value $-7$ at
	$p=61$ is the first, are the only data not reducible to elliptic modular
	forms of level $p$. We produce it from the graph as the $x+7$ factor of
	the $128\times128$ characteristic polynomial \eqref{eq:charp61}, matching
	\cite{PoorYuen2015}, and the same computation extends to every non-lift
	level through $p=149$ (\S\ref{ss:full-range}), where the rational values
	$-6,-5,-4,-3$, the conjugate pairs over $\Q(\sqrt5)$ and $\Q(\sqrt2)$,
	and the cyclic cubic blocks with Hecke fields $\Q(\zeta_7)^+$ and
	$\Q(\zeta_9)^+$ appear, in agreement with \cite{ALRTV}
	(\S\ref{ss:external}).
	
	\subsection{The sign bias}
	
	The sign bias $d(p)\ge0$ holds for every prime (Theorem~\ref{thm:bias}):
	the exact formula \eqref{eq:exactmass} exhibits it as the
	$B_{2,\chi}$-term of the non-principal trace, and the sign follows from
	the effective positivity of $\zeta_K(-1)$. It sharpens the full-space
	bias of \cite[Thm.~7.1]{Ibukiyama-dim} at $k=3$ from $d(p)\ge-\dim S_4^{-}$
	to $d(p)\ge0$. The bias is thus a theorem of
	the trace framework, the first application of
	Theorem~\ref{thm:structural-trace} beyond its own statement.
	
	\subsection{Open problems}
	
	The sharpest remaining arithmetic questions arising from the present results are the following.
	
	\begin{enumerate}[label=\textup{(\arabic*)},leftmargin=2.4em]
		\item \textbf{A direct local proof of the trace comparison.}
		Proposition~\ref{prop:genus-trace-comparison} is proved globally, by
		comparing two closed evaluations. A direct local proof (an equality of
		twisted orbital integrals at the order-$3$ and order-$4$ torsion classes
		for the two non-conjugate maximal parahorics of $\GU_2(B_p)$, with fixed
		Haar measures and explicit local representatives) remains open
		(Remark~\ref{rem:global-orbital}).
		
		\item \textbf{The remaining multiplicity statements of
			Conjecture~\ref{conj:eigenvalue-sign}.} For the general-type rows,
		occurrence and multiplicity are settled for every prime by
		Theorem~\ref{thm:occurrence}: the required multiplicity statement is
		\cite[Thm.~11.4]{RoesnerWeissauer2021}, proved through the
		cohomological trace formula and the character identities of Chan and
		Gan \cite{ChanGan2015}, so the route through Arthur's formalism, and
		with it the transfer-factor comparison named as its prerequisite by
		Gee and Ta\"ibi \cite[Rem.~7.4.8]{GeeTaibi}, is not needed here; it
		remains an alternative. Two points stay open. First, the local
		fixed-vector constants of Remarks~\ref{rem:local-constant}
		and~\ref{rem:other-rows}: a finite computation in the parahoric
		restriction of the Iwahori-spherical representations of
		$\GU_2(B_p)$, whose split-side analogue is \cite{Roesner2016} and
		for which we are not aware of a published inner-form counterpart.
		Second, the Saito--Kurokawa and Yoshida rows of the principal
		column; the natural inputs are the transfer of the Saito--Kurokawa
		space to inner forms \cite{GanSK2008} and the classification of weak
		endoscopic lifts on inner forms \cite[\S3]{ChanGan2015}, together
		with the same kind of local fixed-vector data. A
		representation-theoretic proof of the full conjecture, completing
		Ibukiyama's original question, is thereby reduced to these two
		points.
		
		\item \textbf{Higher genus.} It is natural to ask whether the
		correspondence extends to $g\ge3$. The superspecial locus in
		$\mathcal{A}_g$ over $\overline{\F}_p$ carries an isogeny graph whose
		adjacency operator is a higher Brandt matrix for the quaternion Hermitian
		forms of rank $g$, and one expects its eigensystems to match the Hecke
		eigensystems of the automorphic forms on the inner form of $\GSp_{2g}$
		attached to the corresponding genus. As already at $g=2$, such a spectrum
		would be governed by automorphic Hecke eigenvalues rather than by any
		expansion property; the analogue of the trace formula of
		Theorem~\ref{thm:structural-trace} would rest on the local orbital
		integrals at $p$ for $\GSp_{2g}$, whose complexity grows with $g$. We
		regard the eigenvalue-level and trace-formula structure isolated here at
		$g=2$ as the model for that generalization. The same question at composite
		level $N=p_1p_2$ is equally natural.
	\end{enumerate}
	
	\appendix
	
	\section{The eight certified matrices}\label{app:matrices}
	For each prime $11\le p\le37$ we record the out-edge matrix $B_2(2)$ in the
	vertex order indicated, together with the mass weights $e_i=\#\Aut$. Every
	matrix has been re-certified for row sums $15$, the Mestre symmetry
	\eqref{eq:mestre}, both mass formulas of Lemma~\ref{lem:vertices}, and the
	from-scratch recomputation of the supersingular $j$-invariants over $\F_{p^2}$;
	see \S\ref{ss:pillars}. The listed weights also satisfy the total-mass
	relation $\sum_i e_i^{-1}=(p-1)(p^2+1)/5760$ (cf.\ \cite{HashimotoIbukiyama})
	at every listed prime. The graphs and certificates for $41\le p\le149$
	are contained in the artifact \cite{artifact}.
	
	\subsection*{\texorpdfstring{$p=11$\quad ($h_2=5$:\ 3 product vertices, 2 Jacobian vertices)}{p=11 (h2=5: 3 product vertices, 2 Jacobian vertices)}}
	\begin{sloppypar}\noindent\small\textit{Vertices (label, weight $e_i=\#\operatorname{Aut}$), in matrix order:} $v_{1}{=}P\{0,0\}\,(72)$, $v_{2}{=}P\{0,1\}\,(24)$, $v_{3}{=}P\{1,1\}\,(32)$, $v_{4}{=}J_{0}\,(24)$, $v_{5}{=}J_{1}\,(12)$.\end{sloppypar}
	
	\begingroup\small\setlength{\arraycolsep}{4pt}
	\[ B_2(2)\;=\;\left(\begin{array}{*{5}{r}}
		3 & 0 & 9 & 3 & 0 \\
		0 & 6 & 3 & 0 & 6 \\
		4 & 4 & 3 & 4 & 0 \\
		1 & 0 & 3 & 3 & 8 \\
		0 & 3 & 0 & 4 & 8
	\end{array}\right) \]
	\endgroup
	
	\subsection*{\texorpdfstring{$p=13$\quad ($h_2=4$:\ 1 product vertex, 3 Jacobian vertices)}{p=13 (h2=4: 1 product vertex, 3 Jacobian vertices)}}
	\begin{sloppypar}\noindent\small\textit{Vertices (label, weight $e_i=\#\operatorname{Aut}$), in matrix order:} $v_{1}{=}P\{5,5\}\,(8)$, $v_{2}{=}J_{0}\,(48)$, $v_{3}{=}J_{1}\,(12)$, $v_{4}{=}J_{2}\,(8)$.\end{sloppypar}
	
	\begingroup\small\setlength{\arraycolsep}{4pt}
	\[ B_2(2)\;=\;\left(\begin{array}{*{4}{r}}
		10 & 1 & 2 & 2 \\
		6 & 5 & 4 & 0 \\
		3 & 1 & 5 & 6 \\
		2 & 0 & 4 & 9
	\end{array}\right) \]
	\endgroup
	
	\subsection*{\texorpdfstring{$p=17$\quad ($h_2=8$:\ 3 product vertices, 5 Jacobian vertices)}{p=17 (h2=8: 3 product vertices, 5 Jacobian vertices)}}
	\begin{sloppypar}\noindent\small\textit{Vertices (label, weight $e_i=\#\operatorname{Aut}$), in matrix order:} $v_{1}{=}P\{0,0\}\,(72)$, $v_{2}{=}P\{0,8\}\,(12)$, $v_{3}{=}P\{8,8\}\,(8)$, $v_{4}{=}J_{0}\,(24)$, $v_{5}{=}J_{1}\,(12)$, $v_{6}{=}J_{2}\,(4)$, $v_{7}{=}J_{3}\,(12)$, $v_{8}{=}J_{4}\,(8)$.\end{sloppypar}
	
	\begingroup\small\setlength{\arraycolsep}{4pt}
	\[ B_2(2)\;=\;\left(\begin{array}{*{8}{r}}
		3 & 0 & 9 & 3 & 0 & 0 & 0 & 0 \\
		0 & 3 & 6 & 0 & 0 & 3 & 3 & 0 \\
		1 & 4 & 5 & 1 & 2 & 0 & 0 & 2 \\
		1 & 0 & 3 & 9 & 2 & 0 & 0 & 0 \\
		0 & 0 & 3 & 1 & 3 & 6 & 2 & 0 \\
		0 & 1 & 0 & 0 & 2 & 7 & 1 & 4 \\
		0 & 3 & 0 & 0 & 2 & 3 & 1 & 6 \\
		0 & 0 & 2 & 0 & 0 & 8 & 4 & 1
	\end{array}\right) \]
	\endgroup
	
	\subsection*{\texorpdfstring{$p=19$\quad ($h_2=10$:\ 3 product vertices, 7 Jacobian vertices)}{p=19 (h2=10: 3 product vertices, 7 Jacobian vertices)}}
	\begin{sloppypar}\noindent\small\textit{Vertices (label, weight $e_i=\#\operatorname{Aut}$), in matrix order:} $v_{1}{=}P\{7,7\}\,(8)$, $v_{2}{=}P\{7,18\}\,(8)$, $v_{3}{=}P\{18,18\}\,(32)$, $v_{4}{=}J_{0}\,(10)$, $v_{5}{=}J_{1}\,(4)$, $v_{6}{=}J_{2}\,(8)$, $v_{7}{=}J_{3}\,(8)$, $v_{8}{=}J_{4}\,(12)$, $v_{9}{=}J_{5}\,(12)$, $v_{10}{=}J_{6}\,(12)$.\end{sloppypar}
	
	\begingroup\small\setlength{\arraycolsep}{3.5pt}
	\[ B_2(2)\;=\;\left(\begin{array}{*{10}{r}}
		5 & 4 & 1 & 0 & 0 & 1 & 2 & 2 & 0 & 0 \\
		4 & 4 & 1 & 0 & 2 & 0 & 0 & 0 & 2 & 2 \\
		4 & 4 & 3 & 0 & 0 & 4 & 0 & 0 & 0 & 0 \\
		0 & 0 & 0 & 5 & 5 & 5 & 0 & 0 & 0 & 0 \\
		0 & 1 & 0 & 2 & 6 & 2 & 2 & 2 & 0 & 0 \\
		1 & 0 & 1 & 4 & 4 & 3 & 0 & 2 & 0 & 0 \\
		2 & 0 & 0 & 0 & 4 & 0 & 5 & 0 & 2 & 2 \\
		3 & 0 & 0 & 0 & 6 & 3 & 0 & 1 & 1 & 1 \\
		0 & 3 & 0 & 0 & 0 & 0 & 3 & 1 & 3 & 5 \\
		0 & 3 & 0 & 0 & 0 & 0 & 3 & 1 & 5 & 3
	\end{array}\right) \]
	\endgroup
	
	\subsection*{\texorpdfstring{$p=23$\quad ($h_2=16$:\ 6 product vertices, 10 Jacobian vertices)}{p=23 (h2=16: 6 product vertices, 10 Jacobian vertices)}}
	\begin{sloppypar}\noindent\footnotesize\textit{Vertices (label, weight $e_i=\#\operatorname{Aut}$), in matrix order:} $v_{1}{=}P\{0,0\}\,(72)$, $v_{2}{=}P\{0,3\}\,(24)$, $v_{3}{=}P\{0,19\}\,(12)$, $v_{4}{=}P\{3,3\}\,(32)$, $v_{5}{=}P\{3,19\}\,(8)$, $v_{6}{=}P\{19,19\}\,(8)$, $v_{7}{=}J_{0}\,(24)$, $v_{8}{=}J_{1}\,(4)$, $v_{9}{=}J_{2}\,(12)$, $v_{10}{=}J_{3}\,(4)$, $v_{11}{=}J_{4}\,(8)$, $v_{12}{=}J_{5}\,(4)$, $v_{13}{=}J_{6}\,(4)$, $v_{14}{=}J_{7}\,(4)$, $v_{15}{=}J_{8}\,(12)$, $v_{16}{=}J_{9}\,(48)$.\end{sloppypar}
	
	\begingroup\footnotesize\setlength{\arraycolsep}{2.6pt}
	\[ B_2(2)\;=\;\left(\begin{array}{*{16}{r}}
		3 & 0 & 0 & 0 & 0 & 9 & 3 & 0 & 0 & 0 & 0 & 0 & 0 & 0 & 0 & 0 \\
		0 & 0 & 0 & 0 & 3 & 6 & 0 & 0 & 0 & 0 & 0 & 0 & 0 & 0 & 6 & 0 \\
		0 & 0 & 3 & 0 & 3 & 3 & 0 & 0 & 0 & 3 & 0 & 3 & 0 & 0 & 0 & 0 \\
		0 & 0 & 0 & 3 & 4 & 4 & 0 & 0 & 0 & 0 & 4 & 0 & 0 & 0 & 0 & 0 \\
		0 & 1 & 2 & 1 & 3 & 2 & 0 & 2 & 2 & 0 & 0 & 0 & 0 & 2 & 0 & 0 \\
		1 & 2 & 2 & 1 & 2 & 2 & 1 & 0 & 0 & 0 & 1 & 0 & 2 & 0 & 0 & 1 \\
		1 & 0 & 0 & 0 & 0 & 3 & 3 & 6 & 2 & 0 & 0 & 0 & 0 & 0 & 0 & 0 \\
		0 & 0 & 0 & 0 & 1 & 0 & 1 & 4 & 0 & 3 & 2 & 3 & 1 & 0 & 0 & 0 \\
		0 & 0 & 0 & 0 & 3 & 0 & 1 & 0 & 1 & 0 & 0 & 0 & 3 & 3 & 4 & 0 \\
		0 & 0 & 1 & 0 & 0 & 0 & 0 & 3 & 0 & 3 & 1 & 3 & 1 & 3 & 0 & 0 \\
		0 & 0 & 0 & 1 & 0 & 1 & 0 & 4 & 0 & 2 & 5 & 2 & 0 & 0 & 0 & 0 \\
		0 & 0 & 1 & 0 & 0 & 0 & 0 & 3 & 0 & 3 & 1 & 3 & 1 & 3 & 0 & 0 \\
		0 & 0 & 0 & 0 & 0 & 1 & 0 & 1 & 1 & 1 & 0 & 1 & 4 & 4 & 2 & 0 \\
		0 & 0 & 0 & 0 & 1 & 0 & 0 & 0 & 1 & 3 & 0 & 3 & 4 & 3 & 0 & 0 \\
		0 & 3 & 0 & 0 & 0 & 0 & 0 & 0 & 4 & 0 & 0 & 0 & 6 & 0 & 0 & 2 \\
		0 & 0 & 0 & 0 & 0 & 6 & 0 & 0 & 0 & 0 & 0 & 0 & 0 & 0 & 8 & 1
	\end{array}\right) \]
	\endgroup
	
	\subsection*{\texorpdfstring{$p=29$\quad ($h_2=24$:\ 6 product vertices, 18 Jacobian vertices)}{p=29 (h2=24: 6 product vertices, 18 Jacobian vertices)}}
	\begin{sloppypar}\noindent\scriptsize\textit{Vertices (label, weight $e_i=\#\operatorname{Aut}$), in matrix order:} $v_{1}{=}P\{0,0\}\,(72)$, $v_{2}{=}P\{0,2\}\,(12)$, $v_{3}{=}P\{0,25\}\,(12)$, $v_{4}{=}P\{2,2\}\,(8)$, $v_{5}{=}P\{2,25\}\,(4)$, $v_{6}{=}P\{25,25\}\,(8)$, $v_{7}{=}J_{0}\,(24)$, $v_{8}{=}J_{1}\,(4)$, $v_{9}{=}J_{2}\,(48)$, $v_{10}{=}J_{3}\,(4)$, $v_{11}{=}J_{4}\,(2)$, $v_{12}{=}J_{5}\,(4)$, $v_{13}{=}J_{6}\,(4)$, $v_{14}{=}J_{7}\,(4)$, $v_{15}{=}J_{8}\,(12)$, $v_{16}{=}J_{9}\,(4)$, $v_{17}{=}J_{10}\,(4)$, $v_{18}{=}J_{11}\,(8)$, $v_{19}{=}J_{12}\,(12)$, $v_{20}{=}J_{13}\,(4)$, $v_{21}{=}J_{14}\,(10)$, $v_{22}{=}J_{15}\,(8)$, $v_{23}{=}J_{16}\,(4)$, $v_{24}{=}J_{17}\,(12)$.\end{sloppypar}
	
	\begingroup\scriptsize\setlength{\arraycolsep}{2.0pt}
	\[ B_2(2)\;=\;\left(\begin{array}{*{24}{r}}
		3 & 0 & 0 & 9 & 0 & 0 & 3 & 0 & 0 & 0 & 0 & 0 & 0 & 0 & 0 & 0 & 0 & 0 & 0 & 0 & 0 & 0 & 0 & 0 \\
		0 & 3 & 0 & 0 & 6 & 0 & 0 & 0 & 0 & 0 & 0 & 0 & 0 & 0 & 0 & 3 & 3 & 0 & 0 & 0 & 0 & 0 & 0 & 0 \\
		0 & 0 & 0 & 6 & 3 & 0 & 0 & 0 & 0 & 0 & 0 & 0 & 0 & 0 & 0 & 0 & 0 & 0 & 0 & 3 & 0 & 0 & 0 & 3 \\
		1 & 0 & 4 & 1 & 0 & 4 & 1 & 0 & 0 & 0 & 0 & 0 & 0 & 0 & 2 & 0 & 0 & 1 & 0 & 0 & 0 & 1 & 0 & 0 \\
		0 & 2 & 1 & 0 & 4 & 2 & 0 & 0 & 0 & 1 & 0 & 1 & 1 & 1 & 0 & 0 & 0 & 0 & 1 & 0 & 0 & 0 & 1 & 0 \\
		0 & 0 & 0 & 4 & 4 & 2 & 0 & 2 & 1 & 0 & 0 & 0 & 0 & 0 & 0 & 0 & 0 & 1 & 0 & 0 & 0 & 1 & 0 & 0 \\
		1 & 0 & 0 & 3 & 0 & 0 & 3 & 6 & 2 & 0 & 0 & 0 & 0 & 0 & 0 & 0 & 0 & 0 & 0 & 0 & 0 & 0 & 0 & 0 \\
		0 & 0 & 0 & 0 & 0 & 1 & 1 & 4 & 0 & 1 & 4 & 1 & 1 & 1 & 1 & 0 & 0 & 0 & 0 & 0 & 0 & 0 & 0 & 0 \\
		0 & 0 & 0 & 0 & 0 & 6 & 4 & 0 & 1 & 0 & 0 & 0 & 0 & 0 & 4 & 0 & 0 & 0 & 0 & 0 & 0 & 0 & 0 & 0 \\
		0 & 0 & 0 & 0 & 1 & 0 & 0 & 1 & 0 & 2 & 4 & 1 & 0 & 0 & 0 & 3 & 1 & 1 & 1 & 0 & 0 & 0 & 0 & 0 \\
		0 & 0 & 0 & 0 & 0 & 0 & 0 & 2 & 0 & 2 & 2 & 2 & 1 & 1 & 0 & 0 & 1 & 0 & 0 & 3 & 1 & 0 & 0 & 0 \\
		0 & 0 & 0 & 0 & 1 & 0 & 0 & 1 & 0 & 1 & 4 & 2 & 0 & 0 & 0 & 3 & 1 & 0 & 1 & 0 & 0 & 1 & 0 & 0 \\
		0 & 0 & 0 & 0 & 1 & 0 & 0 & 1 & 0 & 0 & 2 & 0 & 3 & 0 & 0 & 1 & 1 & 1 & 0 & 0 & 0 & 2 & 3 & 0 \\
		0 & 0 & 0 & 0 & 1 & 0 & 0 & 1 & 0 & 0 & 2 & 0 & 0 & 3 & 0 & 1 & 1 & 2 & 0 & 0 & 0 & 1 & 3 & 0 \\
		0 & 0 & 0 & 3 & 0 & 0 & 0 & 3 & 1 & 0 & 0 & 0 & 0 & 0 & 0 & 0 & 0 & 0 & 0 & 6 & 0 & 0 & 0 & 2 \\
		0 & 1 & 0 & 0 & 0 & 0 & 0 & 0 & 0 & 3 & 0 & 3 & 1 & 1 & 0 & 1 & 1 & 0 & 0 & 2 & 2 & 0 & 0 & 0 \\
		0 & 1 & 0 & 0 & 0 & 0 & 0 & 0 & 0 & 1 & 2 & 1 & 1 & 1 & 0 & 1 & 5 & 0 & 0 & 0 & 0 & 0 & 2 & 0 \\
		0 & 0 & 0 & 1 & 0 & 1 & 0 & 0 & 0 & 2 & 0 & 0 & 2 & 4 & 0 & 0 & 0 & 1 & 0 & 2 & 0 & 0 & 0 & 2 \\
		0 & 0 & 0 & 0 & 3 & 0 & 0 & 0 & 0 & 3 & 0 & 3 & 0 & 0 & 0 & 0 & 0 & 0 & 2 & 3 & 0 & 0 & 0 & 1 \\
		0 & 0 & 1 & 0 & 0 & 0 & 0 & 0 & 0 & 0 & 6 & 0 & 0 & 0 & 2 & 2 & 0 & 1 & 1 & 0 & 0 & 1 & 1 & 0 \\
		0 & 0 & 0 & 0 & 0 & 0 & 0 & 0 & 0 & 0 & 5 & 0 & 0 & 0 & 0 & 5 & 0 & 0 & 0 & 0 & 0 & 0 & 5 & 0 \\
		0 & 0 & 0 & 1 & 0 & 1 & 0 & 0 & 0 & 0 & 0 & 2 & 4 & 2 & 0 & 0 & 0 & 0 & 0 & 2 & 0 & 1 & 0 & 2 \\
		0 & 0 & 0 & 0 & 1 & 0 & 0 & 0 & 0 & 0 & 0 & 0 & 3 & 3 & 0 & 0 & 2 & 0 & 0 & 1 & 2 & 0 & 2 & 1 \\
		0 & 0 & 3 & 0 & 0 & 0 & 0 & 0 & 0 & 0 & 0 & 0 & 0 & 0 & 2 & 0 & 0 & 3 & 1 & 0 & 0 & 3 & 3 & 0
	\end{array}\right) \]
	\endgroup
	
	\subsection*{\texorpdfstring{$p=31$\quad ($h_2=26$:\ 6 product vertices, 20 Jacobian vertices)}{p=31 (h2=26: 6 product vertices, 20 Jacobian vertices)}}
	\begin{sloppypar}\noindent\scriptsize\textit{Vertices (label, weight $e_i=\#\operatorname{Aut}$), in matrix order:} $v_{1}{=}P\{2,2\}\,(8)$, $v_{2}{=}P\{2,4\}\,(4)$, $v_{3}{=}P\{2,23\}\,(8)$, $v_{4}{=}P\{4,4\}\,(8)$, $v_{5}{=}P\{4,23\}\,(8)$, $v_{6}{=}P\{23,23\}\,(32)$, $v_{7}{=}J_{0}\,(48)$, $v_{8}{=}J_{1}\,(12)$, $v_{9}{=}J_{2}\,(12)$, $v_{10}{=}J_{3}\,(12)$, $v_{11}{=}J_{4}\,(4)$, $v_{12}{=}J_{5}\,(4)$, $v_{13}{=}J_{6}\,(12)$, $v_{14}{=}J_{7}\,(4)$, $v_{15}{=}J_{8}\,(4)$, $v_{16}{=}J_{9}\,(4)$, $v_{17}{=}J_{10}\,(2)$, $v_{18}{=}J_{11}\,(8)$, $v_{19}{=}J_{12}\,(4)$, $v_{20}{=}J_{13}\,(4)$, $v_{21}{=}J_{14}\,(8)$, $v_{22}{=}J_{15}\,(4)$, $v_{23}{=}J_{16}\,(4)$, $v_{24}{=}J_{17}\,(4)$, $v_{25}{=}J_{18}\,(2)$, $v_{26}{=}J_{19}\,(8)$.\end{sloppypar}
	
	\begingroup\scriptsize\setlength{\arraycolsep}{1.9pt}
	\[ B_2(2)\;=\;\left(\begin{array}{*{26}{r}}
		2 & 2 & 2 & 1 & 2 & 1 & 1 & 0 & 0 & 0 & 0 & 2 & 0 & 0 & 0 & 0 & 0 & 0 & 0 & 0 & 1 & 0 & 0 & 0 & 0 & 1 \\
		1 & 3 & 1 & 2 & 2 & 0 & 0 & 0 & 0 & 1 & 1 & 0 & 1 & 1 & 0 & 1 & 0 & 0 & 0 & 0 & 0 & 0 & 1 & 0 & 0 & 0 \\
		2 & 2 & 3 & 0 & 1 & 1 & 0 & 0 & 0 & 0 & 0 & 0 & 0 & 0 & 0 & 0 & 0 & 0 & 2 & 0 & 0 & 2 & 0 & 2 & 0 & 0 \\
		1 & 4 & 0 & 5 & 0 & 0 & 0 & 0 & 0 & 0 & 0 & 0 & 0 & 0 & 0 & 0 & 0 & 2 & 0 & 2 & 0 & 0 & 0 & 0 & 0 & 1 \\
		2 & 4 & 1 & 0 & 2 & 0 & 0 & 2 & 2 & 0 & 0 & 0 & 0 & 0 & 2 & 0 & 0 & 0 & 0 & 0 & 0 & 0 & 0 & 0 & 0 & 0 \\
		4 & 0 & 4 & 0 & 0 & 3 & 0 & 0 & 0 & 0 & 0 & 0 & 0 & 0 & 0 & 0 & 0 & 0 & 0 & 0 & 4 & 0 & 0 & 0 & 0 & 0 \\
		6 & 0 & 0 & 0 & 0 & 0 & 1 & 4 & 4 & 0 & 0 & 0 & 0 & 0 & 0 & 0 & 0 & 0 & 0 & 0 & 0 & 0 & 0 & 0 & 0 & 0 \\
		0 & 0 & 0 & 0 & 3 & 0 & 1 & 0 & 4 & 1 & 3 & 3 & 0 & 0 & 0 & 0 & 0 & 0 & 0 & 0 & 0 & 0 & 0 & 0 & 0 & 0 \\
		0 & 0 & 0 & 0 & 3 & 0 & 1 & 4 & 0 & 0 & 0 & 3 & 1 & 3 & 0 & 0 & 0 & 0 & 0 & 0 & 0 & 0 & 0 & 0 & 0 & 0 \\
		0 & 3 & 0 & 0 & 0 & 0 & 0 & 1 & 0 & 3 & 0 & 0 & 2 & 0 & 3 & 3 & 0 & 0 & 0 & 0 & 0 & 0 & 0 & 0 & 0 & 0 \\
		0 & 1 & 0 & 0 & 0 & 0 & 0 & 1 & 0 & 0 & 0 & 3 & 0 & 2 & 1 & 0 & 2 & 1 & 2 & 1 & 1 & 0 & 0 & 0 & 0 & 0 \\
		1 & 0 & 0 & 0 & 0 & 0 & 0 & 1 & 1 & 0 & 3 & 4 & 0 & 3 & 0 & 0 & 0 & 0 & 1 & 0 & 0 & 1 & 0 & 0 & 0 & 0 \\
		0 & 3 & 0 & 0 & 0 & 0 & 0 & 0 & 1 & 2 & 0 & 0 & 3 & 0 & 3 & 0 & 0 & 0 & 0 & 0 & 0 & 0 & 3 & 0 & 0 & 0 \\
		0 & 1 & 0 & 0 & 0 & 0 & 0 & 0 & 1 & 0 & 2 & 3 & 0 & 0 & 1 & 0 & 2 & 1 & 2 & 1 & 1 & 0 & 0 & 0 & 0 & 0 \\
		0 & 0 & 0 & 0 & 1 & 0 & 0 & 0 & 0 & 1 & 1 & 0 & 1 & 1 & 2 & 2 & 2 & 0 & 0 & 0 & 0 & 2 & 2 & 0 & 0 & 0 \\
		0 & 1 & 0 & 0 & 0 & 0 & 0 & 0 & 0 & 1 & 0 & 0 & 0 & 0 & 2 & 2 & 2 & 1 & 1 & 1 & 0 & 0 & 1 & 1 & 2 & 0 \\
		0 & 0 & 0 & 0 & 0 & 0 & 0 & 0 & 0 & 0 & 1 & 0 & 0 & 1 & 1 & 1 & 3 & 0 & 2 & 2 & 0 & 1 & 1 & 0 & 2 & 0 \\
		0 & 0 & 0 & 2 & 0 & 0 & 0 & 0 & 0 & 0 & 2 & 0 & 0 & 2 & 0 & 2 & 0 & 1 & 0 & 0 & 0 & 4 & 2 & 0 & 0 & 0 \\
		0 & 0 & 1 & 0 & 0 & 0 & 0 & 0 & 0 & 0 & 2 & 1 & 0 & 2 & 0 & 1 & 4 & 0 & 0 & 0 & 2 & 0 & 1 & 0 & 0 & 1 \\
		0 & 0 & 0 & 1 & 0 & 0 & 0 & 0 & 0 & 0 & 1 & 0 & 0 & 1 & 0 & 1 & 4 & 0 & 0 & 3 & 2 & 0 & 1 & 0 & 0 & 1 \\
		1 & 0 & 0 & 0 & 0 & 1 & 0 & 0 & 0 & 0 & 2 & 0 & 0 & 2 & 0 & 0 & 0 & 0 & 4 & 4 & 1 & 0 & 0 & 0 & 0 & 0 \\
		0 & 0 & 1 & 0 & 0 & 0 & 0 & 0 & 0 & 0 & 0 & 1 & 0 & 0 & 2 & 0 & 2 & 2 & 0 & 0 & 0 & 3 & 0 & 1 & 2 & 1 \\
		0 & 1 & 0 & 0 & 0 & 0 & 0 & 0 & 0 & 0 & 0 & 0 & 1 & 0 & 2 & 1 & 2 & 1 & 1 & 1 & 0 & 0 & 2 & 1 & 2 & 0 \\
		0 & 0 & 1 & 0 & 0 & 0 & 0 & 0 & 0 & 0 & 0 & 0 & 0 & 0 & 0 & 1 & 0 & 0 & 0 & 0 & 0 & 1 & 1 & 7 & 4 & 0 \\
		0 & 0 & 0 & 0 & 0 & 0 & 0 & 0 & 0 & 0 & 0 & 0 & 0 & 0 & 0 & 1 & 2 & 0 & 0 & 0 & 0 & 1 & 1 & 2 & 7 & 1 \\
		1 & 0 & 0 & 1 & 0 & 0 & 0 & 0 & 0 & 0 & 0 & 0 & 0 & 0 & 0 & 0 & 0 & 0 & 2 & 2 & 0 & 2 & 0 & 0 & 4 & 3
	\end{array}\right) \]
	\endgroup
	
	\subsection*{\texorpdfstring{$p=37$\quad ($h_2=37$:\ 6 product vertices, 31 Jacobian vertices)}{p=37 (h2=37: 6 product vertices, 31 Jacobian vertices)}}
	\begin{sloppypar}\noindent\scriptsize\textit{Vertices (label, weight $e_i=\#\operatorname{Aut}$), in matrix order:} $v_{1}{=}P\{8,8\}\,(8)$, $v_{2}{=}P\{8,20{+}10\omega\}\,(4)$, $v_{3}{=}P\{8,23{+}27\omega\}\,(4)$, $v_{4}{=}P\{20{+}10\omega,20{+}10\omega\}\,(8)$, $v_{5}{=}P\{20{+}10\omega,23{+}27\omega\}\,(4)$, $v_{6}{=}P\{23{+}27\omega,23{+}27\omega\}\,(8)$, $v_{7}{=}J_{0}\,(48)$, $v_{8}{=}J_{1}\,(12)$, $v_{9}{=}J_{2}\,(12)$, $v_{10}{=}J_{3}\,(4)$, $v_{11}{=}J_{4}\,(4)$, $v_{12}{=}J_{5}\,(4)$, $v_{13}{=}J_{6}\,(4)$, $v_{14}{=}J_{7}\,(12)$, $v_{15}{=}J_{8}\,(2)$, $v_{16}{=}J_{9}\,(4)$, $v_{17}{=}J_{10}\,(2)$, $v_{18}{=}J_{11}\,(4)$, $v_{19}{=}J_{12}\,(4)$, $v_{20}{=}J_{13}\,(4)$, $v_{21}{=}J_{14}\,(12)$, $v_{22}{=}J_{15}\,(2)$, $v_{23}{=}J_{16}\,(4)$, $v_{24}{=}J_{17}\,(4)$, $v_{25}{=}J_{18}\,(12)$, $v_{26}{=}J_{19}\,(4)$, $v_{27}{=}J_{20}\,(2)$, $v_{28}{=}J_{21}\,(4)$, $v_{29}{=}J_{22}\,(4)$, $v_{30}{=}J_{23}\,(2)$, $v_{31}{=}J_{24}\,(4)$, $v_{32}{=}J_{25}\,(4)$, $v_{33}{=}J_{26}\,(8)$, $v_{34}{=}J_{27}\,(4)$, $v_{35}{=}J_{28}\,(8)$, $v_{36}{=}J_{29}\,(8)$, $v_{37}{=}J_{30}\,(8)$; $\omega$ denotes a fixed generator of $\F_{37^2}$ over $\F_{37}$.\end{sloppypar}
	
	\begingroup\tiny\setlength{\arraycolsep}{1.5pt}
	\[ B_2(2)\;=\;\left(\begin{array}{*{37}{r}}
		2 & 2 & 2 & 1 & 2 & 1 & 1 & 0 & 0 & 2 & 0 & 0 & 0 & 0 & 0 & 0 & 0 & 0 & 0 & 0 & 0 & 0 & 0 & 0 & 0 & 0 & 0 & 0 & 0 & 0 & 0 & 0 & 0 & 0 & 1 & 0 & 1 \\
		1 & 1 & 3 & 0 & 2 & 2 & 0 & 0 & 0 & 0 & 0 & 0 & 0 & 0 & 0 & 0 & 0 & 0 & 1 & 1 & 1 & 0 & 0 & 0 & 0 & 1 & 0 & 1 & 0 & 0 & 0 & 0 & 0 & 1 & 0 & 0 & 0 \\
		1 & 3 & 1 & 2 & 2 & 0 & 0 & 0 & 0 & 0 & 0 & 0 & 0 & 0 & 0 & 1 & 0 & 1 & 0 & 0 & 0 & 0 & 1 & 1 & 1 & 0 & 0 & 0 & 0 & 0 & 1 & 0 & 0 & 0 & 0 & 0 & 0 \\
		1 & 0 & 4 & 1 & 0 & 4 & 0 & 0 & 0 & 0 & 0 & 0 & 0 & 0 & 0 & 0 & 0 & 0 & 0 & 0 & 0 & 0 & 0 & 0 & 0 & 0 & 0 & 0 & 0 & 0 & 0 & 2 & 1 & 0 & 1 & 1 & 0 \\
		1 & 2 & 2 & 0 & 4 & 0 & 0 & 1 & 1 & 0 & 1 & 1 & 1 & 1 & 0 & 0 & 0 & 0 & 0 & 0 & 0 & 0 & 0 & 0 & 0 & 0 & 0 & 0 & 0 & 0 & 0 & 0 & 0 & 0 & 0 & 0 & 0 \\
		1 & 4 & 0 & 4 & 0 & 1 & 0 & 0 & 0 & 0 & 0 & 0 & 0 & 0 & 0 & 0 & 0 & 0 & 0 & 0 & 0 & 0 & 0 & 0 & 0 & 0 & 0 & 0 & 2 & 0 & 0 & 0 & 1 & 0 & 0 & 1 & 1 \\
		6 & 0 & 0 & 0 & 0 & 0 & 1 & 4 & 4 & 0 & 0 & 0 & 0 & 0 & 0 & 0 & 0 & 0 & 0 & 0 & 0 & 0 & 0 & 0 & 0 & 0 & 0 & 0 & 0 & 0 & 0 & 0 & 0 & 0 & 0 & 0 & 0 \\
		0 & 0 & 0 & 0 & 3 & 0 & 1 & 2 & 0 & 3 & 3 & 3 & 0 & 0 & 0 & 0 & 0 & 0 & 0 & 0 & 0 & 0 & 0 & 0 & 0 & 0 & 0 & 0 & 0 & 0 & 0 & 0 & 0 & 0 & 0 & 0 & 0 \\
		0 & 0 & 0 & 0 & 3 & 0 & 1 & 0 & 4 & 3 & 0 & 0 & 3 & 1 & 0 & 0 & 0 & 0 & 0 & 0 & 0 & 0 & 0 & 0 & 0 & 0 & 0 & 0 & 0 & 0 & 0 & 0 & 0 & 0 & 0 & 0 & 0 \\
		1 & 0 & 0 & 0 & 0 & 0 & 0 & 1 & 1 & 2 & 0 & 0 & 2 & 0 & 2 & 1 & 2 & 1 & 1 & 1 & 0 & 0 & 0 & 0 & 0 & 0 & 0 & 0 & 0 & 0 & 0 & 0 & 0 & 0 & 0 & 0 & 0 \\
		0 & 0 & 0 & 0 & 1 & 0 & 0 & 1 & 0 & 0 & 0 & 1 & 0 & 0 & 2 & 0 & 2 & 1 & 1 & 0 & 1 & 2 & 1 & 2 & 0 & 0 & 0 & 0 & 0 & 0 & 0 & 0 & 0 & 0 & 0 & 0 & 0 \\
		0 & 0 & 0 & 0 & 1 & 0 & 0 & 1 & 0 & 0 & 1 & 0 & 0 & 0 & 2 & 1 & 2 & 0 & 0 & 1 & 0 & 0 & 0 & 0 & 1 & 2 & 2 & 1 & 0 & 0 & 0 & 0 & 0 & 0 & 0 & 0 & 0 \\
		0 & 0 & 0 & 0 & 1 & 0 & 0 & 0 & 1 & 2 & 0 & 0 & 3 & 0 & 0 & 0 & 0 & 1 & 0 & 1 & 0 & 2 & 1 & 0 & 0 & 0 & 2 & 1 & 0 & 0 & 0 & 0 & 0 & 0 & 0 & 0 & 0 \\
		0 & 0 & 0 & 0 & 3 & 0 & 0 & 0 & 1 & 0 & 0 & 0 & 0 & 3 & 0 & 3 & 0 & 0 & 3 & 0 & 1 & 0 & 0 & 0 & 1 & 0 & 0 & 0 & 0 & 0 & 0 & 0 & 0 & 0 & 0 & 0 & 0 \\
		0 & 0 & 0 & 0 & 0 & 0 & 0 & 0 & 0 & 1 & 1 & 1 & 0 & 0 & 1 & 1 & 2 & 1 & 0 & 0 & 0 & 0 & 0 & 0 & 0 & 1 & 2 & 1 & 1 & 1 & 1 & 0 & 0 & 0 & 0 & 0 & 0 \\
		0 & 0 & 1 & 0 & 0 & 0 & 0 & 0 & 0 & 1 & 0 & 1 & 0 & 1 & 2 & 2 & 0 & 0 & 2 & 0 & 0 & 0 & 0 & 0 & 0 & 0 & 0 & 0 & 0 & 0 & 0 & 1 & 1 & 2 & 1 & 0 & 0 \\
		0 & 0 & 0 & 0 & 0 & 0 & 0 & 0 & 0 & 1 & 1 & 1 & 0 & 0 & 2 & 0 & 1 & 0 & 1 & 1 & 0 & 2 & 1 & 1 & 0 & 0 & 0 & 0 & 0 & 1 & 0 & 1 & 0 & 1 & 0 & 0 & 0 \\
		0 & 0 & 1 & 0 & 0 & 0 & 0 & 0 & 0 & 1 & 1 & 0 & 1 & 0 & 2 & 0 & 0 & 0 & 0 & 2 & 0 & 2 & 0 & 0 & 0 & 0 & 0 & 0 & 0 & 0 & 0 & 3 & 0 & 0 & 1 & 1 & 0 \\
		0 & 1 & 0 & 0 & 0 & 0 & 0 & 0 & 0 & 1 & 1 & 0 & 0 & 1 & 0 & 2 & 2 & 0 & 2 & 0 & 0 & 0 & 0 & 0 & 0 & 0 & 0 & 0 & 1 & 0 & 2 & 0 & 1 & 0 & 0 & 0 & 1 \\
		0 & 1 & 0 & 0 & 0 & 0 & 0 & 0 & 0 & 1 & 0 & 1 & 1 & 0 & 0 & 0 & 2 & 2 & 0 & 0 & 0 & 0 & 0 & 0 & 0 & 0 & 2 & 0 & 3 & 0 & 0 & 0 & 0 & 0 & 0 & 1 & 1 \\
		0 & 3 & 0 & 0 & 0 & 0 & 0 & 0 & 0 & 0 & 3 & 0 & 0 & 1 & 0 & 0 & 0 & 0 & 0 & 0 & 0 & 0 & 3 & 3 & 2 & 0 & 0 & 0 & 0 & 0 & 0 & 0 & 0 & 0 & 0 & 0 & 0 \\
		0 & 0 & 0 & 0 & 0 & 0 & 0 & 0 & 0 & 0 & 1 & 0 & 1 & 0 & 0 & 0 & 2 & 1 & 0 & 0 & 0 & 2 & 1 & 1 & 0 & 1 & 3 & 0 & 0 & 2 & 0 & 0 & 0 & 0 & 0 & 0 & 0 \\
		0 & 0 & 1 & 0 & 0 & 0 & 0 & 0 & 0 & 0 & 1 & 0 & 1 & 0 & 0 & 0 & 2 & 0 & 0 & 0 & 1 & 2 & 2 & 0 & 0 & 0 & 0 & 2 & 0 & 2 & 0 & 0 & 0 & 1 & 0 & 0 & 0 \\
		0 & 0 & 1 & 0 & 0 & 0 & 0 & 0 & 0 & 0 & 2 & 0 & 0 & 0 & 0 & 0 & 2 & 0 & 0 & 0 & 1 & 2 & 0 & 1 & 0 & 1 & 2 & 0 & 0 & 0 & 1 & 1 & 0 & 0 & 0 & 1 & 0 \\
		0 & 0 & 3 & 0 & 0 & 0 & 0 & 0 & 0 & 0 & 0 & 3 & 0 & 1 & 0 & 0 & 0 & 0 & 0 & 0 & 2 & 0 & 0 & 0 & 0 & 3 & 0 & 3 & 0 & 0 & 0 & 0 & 0 & 0 & 0 & 0 & 0 \\
		0 & 1 & 0 & 0 & 0 & 0 & 0 & 0 & 0 & 0 & 0 & 2 & 0 & 0 & 2 & 0 & 0 & 0 & 0 & 0 & 0 & 2 & 0 & 1 & 1 & 1 & 2 & 0 & 1 & 0 & 0 & 0 & 0 & 1 & 0 & 1 & 0 \\
		0 & 0 & 0 & 0 & 0 & 0 & 0 & 0 & 0 & 0 & 0 & 1 & 1 & 0 & 2 & 0 & 0 & 0 & 0 & 1 & 0 & 3 & 0 & 1 & 0 & 1 & 2 & 1 & 0 & 2 & 0 & 0 & 0 & 0 & 0 & 0 & 0 \\
		0 & 1 & 0 & 0 & 0 & 0 & 0 & 0 & 0 & 0 & 0 & 1 & 1 & 0 & 2 & 0 & 0 & 0 & 0 & 0 & 0 & 0 & 2 & 0 & 1 & 0 & 2 & 2 & 0 & 2 & 1 & 0 & 0 & 0 & 0 & 0 & 0 \\
		0 & 0 & 0 & 0 & 0 & 1 & 0 & 0 & 0 & 0 & 0 & 0 & 0 & 0 & 2 & 0 & 0 & 0 & 1 & 3 & 0 & 0 & 0 & 0 & 0 & 1 & 0 & 0 & 0 & 0 & 0 & 5 & 0 & 1 & 1 & 0 & 0 \\
		0 & 0 & 0 & 0 & 0 & 0 & 0 & 0 & 0 & 0 & 0 & 0 & 0 & 0 & 1 & 0 & 1 & 0 & 0 & 0 & 0 & 2 & 1 & 0 & 0 & 0 & 2 & 1 & 0 & 6 & 0 & 0 & 0 & 0 & 0 & 1 & 0 \\
		0 & 0 & 1 & 0 & 0 & 0 & 0 & 0 & 0 & 0 & 0 & 0 & 0 & 0 & 2 & 0 & 0 & 0 & 2 & 0 & 0 & 0 & 0 & 1 & 0 & 0 & 0 & 1 & 0 & 0 & 3 & 1 & 1 & 1 & 2 & 0 & 0 \\
		0 & 0 & 0 & 1 & 0 & 0 & 0 & 0 & 0 & 0 & 0 & 0 & 0 & 0 & 0 & 1 & 2 & 3 & 0 & 0 & 0 & 0 & 0 & 1 & 0 & 0 & 0 & 0 & 5 & 0 & 1 & 0 & 0 & 0 & 0 & 0 & 1 \\
		0 & 0 & 0 & 1 & 0 & 1 & 0 & 0 & 0 & 0 & 0 & 0 & 0 & 0 & 0 & 2 & 0 & 0 & 2 & 0 & 0 & 0 & 0 & 0 & 0 & 0 & 0 & 0 & 0 & 0 & 2 & 0 & 5 & 2 & 0 & 0 & 0 \\
		0 & 1 & 0 & 0 & 0 & 0 & 0 & 0 & 0 & 0 & 0 & 0 & 0 & 0 & 0 & 2 & 2 & 0 & 0 & 0 & 0 & 0 & 1 & 0 & 0 & 1 & 0 & 0 & 1 & 0 & 1 & 0 & 1 & 3 & 0 & 0 & 2 \\
		1 & 0 & 0 & 1 & 0 & 0 & 0 & 0 & 0 & 0 & 0 & 0 & 0 & 0 & 0 & 2 & 0 & 2 & 0 & 0 & 0 & 0 & 0 & 0 & 0 & 0 & 0 & 0 & 2 & 0 & 4 & 0 & 0 & 0 & 1 & 0 & 2 \\
		0 & 0 & 0 & 1 & 0 & 1 & 0 & 0 & 0 & 0 & 0 & 0 & 0 & 0 & 0 & 0 & 0 & 2 & 0 & 2 & 0 & 0 & 0 & 2 & 0 & 2 & 0 & 0 & 0 & 4 & 0 & 0 & 0 & 0 & 0 & 1 & 0 \\
		1 & 0 & 0 & 0 & 0 & 1 & 0 & 0 & 0 & 0 & 0 & 0 & 0 & 0 & 0 & 0 & 0 & 0 & 2 & 2 & 0 & 0 & 0 & 0 & 0 & 0 & 0 & 0 & 0 & 0 & 0 & 2 & 0 & 4 & 2 & 0 & 1
	\end{array}\right) \]
	\endgroup
	
	\bigskip
	\section*{Acknowledgments}
	The author thanks Professor Tomoyoshi Ibukiyama for letters on earlier versions of this paper, which corrected the author's treatment of the two genera and of the two parahoric types at
	$p$, supplied the references \cite{IbukiyamaKatsura} and \cite{Ibukiyama2019quinary} together with the observation recorded in Proposition~\ref{prop:genus-trace-comparison}, and included the published version of \cite{Ibukiyama-dim}.

\end{document}